\documentclass[12pt]{amsart}
\usepackage{amssymb,amsfonts,graphicx,amsmath,amsthm,amstext,latexsym,amsfonts}
\usepackage{graphicx,epsfig}
\usepackage{bbm}
\usepackage{t1enc}
\usepackage{subfig}
\usepackage{hyperref}

\usepackage{a4wide}
\usepackage{tikz}
\usetikzlibrary{intersections}
\parindent=0pt
\theoremstyle{plain}

\numberwithin{equation}{section}
\newtheorem{theorem}{Theorem}[section]
\newtheorem{proposition}[theorem]{Proposition}
\newtheorem{lemma}[theorem]{Lemma}
\newtheorem{definition}[theorem]{Definition}
\newtheorem{example}[theorem]{Example}
\newtheorem{remark}[theorem]{Remark}
\newtheorem{counterex}[theorem]{Counterexample}

\usepackage{color}
\definecolor{darkred}{rgb}{0.8,0,0}
\definecolor{darkblue}{rgb}{0,0,0.7}
\definecolor{darkgreen}{rgb}{0,0.4,0}

 %

\newcommand{\eps}{\varepsilon}

\newcommand{\R}{{\mathbb R}}

\newcommand{\un}{{\rm 1\kern -2.5pt l}}
\newcommand{\tr}{{\rm Tr}}

\def\u{\mathbf{u}}
\def\uu{\mathbf{uu}}
\def\vv{\mathbf{v}}

\def\xx{\mathbf{x}}

\def\uu{\mathbf{u}}
\def\vv{\mathbf{v}}
\def\zz{\mathbf{z}}
\def\ff{\mathbf{f}_h}

\def\eps{\varepsilon}
\def\R{{\mathbb R}}
\def\N{{\mathcal N}}

\def\H{{\mathcal H}}

\def\K{{\mathcal K}}
\def\A{{\mathcal A}}

\def\eps{\varepsilon}
\def\R{{\mathbb R}}
\def\N{{\mathbb N}}

\def\e{{\mathcal F}}

\def\H{{\mathcal H}}

\def\K{{\mathcal K}}
\def\A{{\mathcal A}}

\def\F{{\mathcal F}}

\def\E{{\mathbb{E}}}

\def\argmin{\mathop{{\rm argmin}}\nolimits}
\def\sign{\mathop{{\rm sign}}\nolimits}
\def\spt{\mathop{{\rm spt}}\nolimits}
\def\Tr{\mathop{{\rm Tr}}\nolimits}

\def\dv{\mathop{{\rm div}}\nolimits}

\def\u{\mathbf{u}}
\def\v{\mathbf{v}}
\def\e{\mathbf{E}}
\def\z{\mathbf{z}}
\def\v{{\bf v}}

\def\e{{\bf e}}
\def\x{{\bf x}}

\hyphenation{Ma-te-ma-ti-ca}
\hyphenation{Ma-te-ma-ti-ci}
\hyphenation{Piaz-za}
\def\ttau{\boldsymbol{\tau}}

  \title[]{ Variational Problems for F\"oppl-von K\'arm\'an plates}
 \author[]{Francesco Maddalena, Danilo Percivale, Franco Tomarelli }

 \address{Politecnico di Bari, Dipartimento di Meccanica, Matematica, Management, via Re David 200, 70125 Bari, Italy}
 \email{francesco.maddalena@poliba.it}
 \address{Universit\`{a} di Genova, Dipartimento di Ingegneria Meccanica,
  Piazzale Kennedy, Fiera del Mare, Padiglione D, 16129 Genova, Italy}
  \email{percivale@diptem.unige.it}
\address{Politecnico di Milano, Dipartimento di Matematica,  Piazza Leonardo da Vinci 32, 20133 Milano, Italy}
\email{franco.tomarelli@polimi.it}
\date{\today}  \subjclass{}
\begin{document}
 \maketitle
\begin{abstract}
Some variational problems for a F\"oppl-von K\'arm\'an plate subject to general equilibrated loads are studied.
The existence of global minimizers  is proved under the assumption that
the out-of-plane displacement fulfils homogeneous Dirichlet condition on the whole boundary while the in-plane displacement fulfils nonhomogeneous Neumann condition.\\
If the Dirichlet condition is prescribed only on a subset of the boundary, then the energy may be unbounded from below over the set of admissible configurations, as shown by several explicit conterexamples:
in these cases the analysis of critical points is addressed through  an asymptotic development of the energy functional in a neighborhood of the flat configuration.
By a $\Gamma$-convergence approach we show that critical points of the F\"oppl-von K\'arm\'an energy can be strongly approximated by uniform Palais-Smale sequences of suitable functionals: this property leads to identify relevant features for critical points of approximating functionals, e.g. buckled configurations of the plate. \\
Eventually we perform further analysis as the plate thickness tends to $0$, by assuming that the plate
is prestressed and the energy functional depends only on the transverse displacement around the given prestressed state: by this approach, first we identify
suitable exponents of plate thickness for load scaling,
then we show explicit asymptotic oscillating minimizers as a mechanism to relax compressive states in an annular plate.

\end{abstract}

\tableofcontents
\begin{flushleft}
  {\bf AMS Classification Numbers (2010):\,} 49J45, 74K30, 74K35, 74R10.\\
  {\bf Key Words:\,} F\"oppl-von K\'arm\'an, Calculus of Variations, Elasticity, nonlinear Neumann problems, Monge-Amp\`ere equation, critical points, Gamma-convergence, asymptotic analysis, singular perturbations, mechanical instabilities.\\
\end{flushleft}


\section*{Introduction}
\noindent

The F\"oppl-von K\'arm\'an model   is  widely used as an effective theoretical tool in the study of the mechanical behavior of thin elastic plates, for its ability to describe the interplay between membrane
 and bending effects (see \cite{AP}).
This interplay constitutes the source of a rich phenomenology affecting  not only the macroscopic behavior but also the
occurrence  of local micro-instabilities which are crucial also in the behavior of soft solids, biological tissues, gels (\cite{LNMG}). 
A relevant problem consists in detecting a precise geometric description of such creased equilibrium configurations in dependance of the geometric and constitutive properties of the plate. \\
Despite its long and controversial  history, a rigorous analysis of the well posedness for variational problems associated to the F\"oppl-von K\'arm\'an
functional under general boundary conditions is still far from complete. In particular, the minimization problem under general load conditions is quite subtle. The rigorous derivation of the  F\"oppl-von K\'arm\'an plate  model from three-dimensional
nonlinear elasticity was proved by Friesecke, James and M\"uller in the seminal paper  \cite{FJM} under the assumption of normal forces, while in \cite{LM} the authors carefully analyze the validity of such a theory under in-plane compressive forces and study in detail
the instability issue under suitable coercivity hypotheses (\cite[Theorem 4]{LM}). \\
In this paper we study the existence of minimizers for the
F\"oppl-von K\'arm\'an energy, under general load conditions. In particular,
we deal with Dirichlet and Neumann conditions
for the out-of-plane displacement on the whole boundary while
the in-plane displacement fulfils nonhomogeneous Neumann condition, corresponding to general assumptions on the forces
acting on the plate. The existence of minimizers is proved in several cases by exploiting the techniques introduced in \cite{BBGT},\cite{BT} to circumvent the lack of coerciveness appearing in related nonconvex minimization problems and by taking advantage of some properties of the Monge-Amp\`ere equation (see \cite{RT}, \cite{G}).\\
We exhibit also examples where the energy of admissible configurations is not bounded from below, so that existence of minimizers fails and we turn our attention to the critical points by performing singular perturbation analysis of the functional in a neighborhood of a flat configuration.
This analysis leads to detect  critical points of
the  F\"oppl-von K\'arm\'an energy by suitable approximations of Palais-Smale sequences associated to approximating functionals.
Our procedure allows to single out global buckling configurations, in cases when the plate has
a rectangular shape.
As it is well known, wrinkling type phenomena and other micro instabilities (see \cite{CM},\cite{DPH},\cite{DSV},\cite{PDF},\cite{GO}) manifest themselves in sheets with very small  thickness, therefore we focus our analysis on the behavior as thickness tends to $0$
and highlight the energetic competition of oscillating configurations versus flat equilibrium  configurations.\\
The detailed outline of the paper is as follows.\\
In  Section~\ref{section existence} we prove existence of minimizers for the F\"oppl-von K\'arm\'an energy \eqref{FvK} corresponding to a plate of prescribed thickness $h>0$ under the action of balanced loads in three relevant cases:\\
i) the plate is free at the boundary of a generic Lipschitz open set,
while in plane uniform normal traction or mild uniform normal compression is prescribed on the whole boundary (Theorems~\ref{thm global traction},\,\ref{thm global traction0});\par\noindent
ii) the plate is simply supported on the whole boundary of a strictly convex set (Theorem~\ref{existence1});\par\noindent
iii) the plate is clamped on the whole boundary of a generic Lipschitz open set (Theorem~\ref{existence3}).\par\noindent
Moreover we focus the analysis on the cases when these conditions at the boundary are loosened,
by showing explicit counterexamples where the energy is not bounded from below and minimizers do not exist, even for balanced loads  and fixed thickness $h>0$.
\vskip0.1cm\noindent
Section~\ref{section asymptotic} is devoted to study asymptotic behavior of the energy near a flat configuration; this is achieved by scaling the out-of-plane displacements: Theorem~\ref{ps} shows that critical points of the
F\"oppl-von K\'arm\'an energy, say weak solutions of the corresponding Euler-Lagrange equations,  can be approximately reconstructed by means of \textsl{uniform Palais-Smale sequences} (Definition \ref{ups}) associated to
Gamma-converging simpler functionals (concerning Gamma-convergence and critical points we refer also to \cite{JS}).
This analysis clarifies as some relevant features of critical points, like buckled configurations related to approximating energies, can be recovered by the knowledge of equilibrium configurations related to the flat limit problem (Examples \ref{example 2.7},\,\ref{example 2.8}).\\
In Section 3
we study the limit as $h\!\rightarrow \!0$ of scaled F\"oppl-von K\'arm\'an energy $\F_h$ when in-plane forces in \eqref{FvK} scale as ${\bf f}_{h}\!=\!h^\alpha {\bf f}$:
we show in Theorem~\ref{scaling1} and Counterexample~\ref{infmenoinfinito} that the natural scaling of the problem (entailing convergence of energies and minimizers) occurs if $\alpha\geq 2$:
 under this restriction, if $(\u_{h},w_{h})$ is a minimizer of $\F_h$ then the scaled pairs $(h^{-\alpha}\u_{h},h^{-\alpha/2}w_{h})$ provide a weakly compact sequence in $H^{1}\times H^{2}$ and the corresponding scaled energy converges to a limit energy (Theorem \ref{scaling1} and formula \eqref{gammascaling} therein);
on the other hand,
if $\alpha\in[0,2)$ then the scaled energies may be  unbounded from below  as $h\!\rightarrow\! 0$ even for free plates or simply supported or clamped ones (Counterexample~\ref{infmenoinfinito} and Remark~\ref{unbdclamp}).
\\
The results obtained in Sections 1-3 lead us to examine also the case $\alpha\in[0,2)$, by studying the equilibrium configurations of the plate as
$h\rightarrow 0$ 
through relaxation arguments applied to an energetic functional which takes into account a prestressed state of the plate.
Precisely, in Section~\ref{OscillSect}:
we perform the analysis of corresponding asymptotic minimizers, show
a competition between oscillating and flat equilibria and highlight how this competition is ruled by the mechanical and geometrical parameters: 
oscillating equilibria  act as a mechanism to release compression states in the limit.\\
Eventually we exhibit a list of creased and non creased equilibrium configurations of an annular plate (Examples \ref{rad} -\ref{extang}), together with a general strategy (Remark~\ref{strategy}) to build these examples:
if both eigenvalues in the stress tensor of the prestressed state are strictly positive almost everywhere, then we can expect only the flat minimizer; whereas possible occurrence of oscillating configurations requires the presence of a compressive state on a region of positive measure (Proposition \ref{flat},\,Remark \ref{rmk4.4}).\\
The issues involved in the present article are closely related with a large class of instabilities,
according to recent studies (\cite{BEK}, \cite{BK}, \cite{BK1}, \cite{BCM}, \cite{BKN},\cite{BCDM}, \cite{CM}, \cite{LMP}, \cite{LP}, \cite{LOP}, \cite{PDF}).
\vspace{0.1cm}\\
%
\textit{Notation}.
${\rm Sym}_{2,2}(\R)$ denotes $2\times 2$ real symmetric matrices;
$\mathbf{a}\otimes\mathbf{b}$ denotes the matrix with entries $a_ib_j$,
$\mathbf{a}\odot\mathbf{b}=\frac 1 2(\mathbf{a}\otimes\mathbf{b}+\mathbf{a}\otimes\mathbf{b})$
and $\vert \mathbf a\vert^2=\sum_i a_i^2$ for every $\mathbf{a},\mathbf{b}\!\in\!\R^n$;
moreover $|\mathbb{A}|^2\!=\!\sum_{i,j}A_{ij}^2$ and
$\mathbb{A}\!:\!\mathbb{B}\!=\!\sum_{i,j}\!{A}_{ij}{{B}}_{ij}$,
for every $\mathbb{A},\mathbb{B}\!\in\! {\rm Sym}_{2,2}(\R)$  with entries  respectively $A_{ij},\, B_{ij}$.\\
$H^k(\Omega)$ denotes the Sobolev space of functions in the open set $\Omega\subset\R^2$
whose distributional derivatives up to the order $k$ belong to $L^2(\Omega)$; $H^k_0(\Omega)$ denotes the completion of compactly supported functions in the Sobolev $H^k$ norm; $H^1(\Omega,\R^2)$ denotes the vector fields with components in $H^1(\Omega)$.\\
$-\!\!\!\!\!\int_A v \,d\xx = |A|^{-1}\int_A v \,d\xx$ $\forall $ measurable set $A$ and every integrable function $v$ defined on $A$.
${\bf 1}_{A}(\xx)=1\,$ if $\xx\in A$, $ \ {\bf 1}_{A}(\xx)=0\,$ if $\xx\not\in A$. \
${\chi}_{U}(v)=0\,$ if $v\in U$, ${\chi}_{U}(v)=+\infty\,$ if $v\not\in U$.

\section{Minimization  of F\"oppl-von K\'arm\'an functional}
\label{section existence}
Let $\Omega\subset \R^2$ be a bounded open connected set with Lipschitz boundary
$\partial \Omega$, $\xx=(x_1,x_2)$ denotes the coordinates of points in $\Omega$
referring to the canonical reference frame in $\R^2$ and $s>0$ is the thickness of a thin plate-like region whose  reference configuration is $\Omega\times (-\frac{s}{2},\frac{s}{2})$\,; moreover set $s:=hs_{0}$ where $h$ is an non-dimensional scale factor which remains fixed throughout this Section.\\

Let   $\u:\Omega\rightarrow \R^2$ and  $w:\Omega\rightarrow \R$ be respectively the in-plane
and out-of-plane displacements.
In the geometrical  linear setting the \textsl{stretching tensor} ${\mathbb{D}}$ is given by
\begin{equation}
\label{stretch}
{\mathbb{D}}(\u,w)\,=\,\E(\u)\,+\,\frac{1}{2}\,D w\otimes D w,
\end{equation}
where
\begin{equation}
\label{E}
\E(\u)\,=\,\frac{1}{2}(D\u+D\u^{T})
\end{equation}
denotes the linearized strain tensor. \\
The kernel of $\E$, that is the set of infinitesimal rigid displacements in $\Omega$, is denoted by
\begin{equation}
\label{rigid}
\mathcal{R}:=\{\u: \E(\u)=\mathbf 0\}\
\end{equation}
and $\mathcal R(\u)$ denotes the projection of $\u\in H^{1}(\Omega, \mathbb R^{2})$ on $\mathcal R$.\vskip0.1cm
The elastic energy  of a plate of thickness $hs_0>0$ is the sum of a membrane energy
\begin{equation}
F_{h}^m(\u,w)\ =\ hs_{0}\int_\Omega\,J({\mathbb{D}}(\u,w))\,d\xx
\end{equation}
and a bending energy
\begin{equation}
F_{h}^b(w)\ =\ \frac{h^3 s_{0}^{3}}{12}\int_\Omega J(D^2 w)\,d\xx\,.
\end{equation}
We assume  that for every
${\mathbb{A}}\in  \hbox{\rm Sym}_{2,2}(\R)$  the energy density $J$ is given by
\begin{equation}
\label{density}
J({\mathbb{A}})\ =\frac{E}{2(1-\nu^2)}\left(\vert\Tr({\mathbb{A}})\vert^2-2(1-\nu){\rm det }\,{\mathbb{A}}\right)
\ =\
\frac{E}{2(1+\nu)}\,|\mathbb{A}|^2 +
\frac{E\,\nu}{2(1-\nu^2)}\,|\tr\mathbb{A}|^2 \
\end{equation}
where $E>0$ is the Young modulus and $\nu$ is the Poisson ratio, $-1<\nu<1/2 $.\\
A straightforward consequence of \eqref{density} which will be exploited in subsequent computations is
\begin{equation}\label{coerc}
 c_\nu\,\frac{E}{2}\,|\mathbb A|^{2}\ \le\ J({\mathbb{A}}) \ \le\ C_\nu\,\frac{E}{2}\,|\mathbb A|^{2}
\end{equation}
where $\,0\ <\ c_\nu\ :=\ \min\{(1-\nu)^{-1}, (1+\nu)^{-1}\}\ \leq\  C_\nu\ :=\ \max\{(1-\nu)^{-1}, (1+\nu)^{-1}\}\ < \ +\infty$\,. \
By denoting the unit outer normal to $\partial\Omega$ by $\bf n$, we define
\begin{equation}
\label{A}\begin{array}{ll}
& \A^{0}:=\{w\in H^2(\Omega)\;\vert\; w=\frac{\partial w}{\partial{\bf n}}=0 \hbox{ on }\Gamma\,\} 
\vspace{0.1cm}
\\
&\A^{1}:=\{w\in H^2(\Omega)\;\vert\; w=0 \hbox{ on }\Gamma\}
\vspace{0.1cm}
\\
&\A^{2}:
=H^{2}(\Omega)\\
\end{array}
\end{equation}
where the spaces $\A^{0}=\A^{0}(\Gamma)$, $\A^{1}=\A^{1}(\Gamma)$ actually depend on $\Gamma$. 
We assume in general that
\begin{equation}\label{Gamma}
\Gamma\subset \partial\Omega \quad  \hbox{ is a Borel set s.t.}
\quad\H^{1}(\Gamma)>0 \ .
\end{equation}
Let  \begin{equation}
\label{load}
\mathbf{f}_h\in \ L^2(\partial\Omega,\R^2) \,, \:\:\:  g_{h}\in L^2(\Omega)\,
\end{equation}
respectively be  the densities of a given in-plane load distribution and of a given out-of plane load distribution.


By taking into account the work of external loads and different types of boundary conditions, we define
the \textsl{F\"oppl-von K\'arm\'an  functional}, shortly denoted by 
\textbf{FvK}
in the sequel,
\begin{equation}
\label{FvK}
\begin{array}{ll}
\displaystyle & \F_h (\u,w) = \\
& = \displaystyle
hs_{0}\int_\Omega\! J({\mathbb{D}}(\u,w))\,d\xx
+\frac{h^3 s_{0}^{3}}{12}\!\int_\Omega\! J(D^2 w)\,d\xx
-hs_{0} 
\!\displaystyle\int_\Omega\! g_{h}\,w\; d\xx-hs_{0}
\!\displaystyle\int_{\partial\Omega}\!\!{\bf f}_h\cdot\u\, d\H^1.
\end{array}
\end{equation}
Throughout the paper we choose units of measurement such that $s_{0}=1$.\\
Equilibrium configurations of the plate under prescribed loads $\bf f_{h}$ and $g_{h}$
are obtained by minimizing the functional \eqref{FvK} over $H^1(\Omega,\R^2)\times \mathcal{A}^{i}$, $i=0,1,2$,
corresponding respectively to clamped, simply supported and free plate.
The present Section focuses on issues related to  existence and non existence of these minimizers: we study in detail existence of such minimizers
according to the various choices $i=0,1,2$ of boundary conditions and loads
and we exhibit some counterexamples in which the functional is unbounded from below, hence global minimizers do not exist.
\\ The main obstruction in applying the direct methods of the calculus of variations to
this problem relies in the possible lack of coerciveness of the  functional \eqref{FvK}:
indeed the kernel of the membrane  energy density,  which in general is a subset of the set of solutions of the {\it Monge-Amp\`ere} equation in $\Omega$ (see Lemma~\ref{det} below),
may be too large to allow balancing of the internal membrane energy
versus the effect of external  forces, in order to achieve an equilibrium configuration.
Notwithstanding this difficulty, an existence theorem can be proved either assuming a sign condition on boundary forces, or an homogeneous Dirichlet condition on the transverse displacement.
In the first case the work of the external forces is bounded away from zero on the kernel of  the membrane  energy density, thus allowing the global energy to be bounded from below; in the second one a uniqueness result in the theory of {\it Monge-Amp\`ere} equation implies that the kernel of bending energy reduces to the null transverse displacement
(see also \cite{LMP},  \cite{LP}, \cite{LOP}).
These settings together with a tuning of
some  techniques introduced in
\cite{BBGT} and \cite{BT} yield compactness of minimizing sequences, hence existence of minimizers via the direct method.\\
Assuming $\mathbf f_{h}=f_{h}\mathbf n$,
we prove  existence of minimizers for $\mathcal{F}_h$ in $H^{1}(\Omega,\mathbb R^{2})\times H^{2}(\Omega)$, first under the assumption that $f_{h}$ is a nonnegative constant
(Theorem \ref{thm global traction}), second under the assumption that $f_{h}$ is a small negative constant (Theorem \ref{thm global traction0}).
\begin{theorem}\label{thm global traction}(\textbf{uniform boundary traction of a free plate})\\
Assume that $\Omega\subset \mathbb R^{2}$ is a  bounded connected Lipschitz open set and
\begin{equation}\label{nullres}
\int_{\Omega}g_{h}\,d\xx\,=\int_{\Omega}x_1g_{h}\,d\xx\,=\int_{\Omega}x_2g_{h}\,d\xx\,=\,0 \ ,
\end{equation}
\begin{equation}\label{tractionload}
{\bf f}_h=f_{h}{\bf n} \ \ \hbox{on }\partial \Omega\,,
\qquad f_{h}\ge 0 \ \hbox{is a constant.}
\end{equation}
Then, for every fixed $h>0$,
$\F _h$ achieves a minimum over $H^1(\Omega,\R^2)\times H^2(\Omega)$.
%
\end{theorem}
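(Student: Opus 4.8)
The plan is to apply the direct method of the calculus of variations, the only subtlety being the lack of coercivity due to the possibly large kernel of the membrane energy density. I would proceed in three main stages: (1) show the functional is bounded below along minimizing sequences and extract weak limits for suitable ``reduced'' versions of the variables; (2) handle the invariance of the functional under in-plane rigid displacements by quotienting them out; (3) exploit the sign condition $f_h\ge 0$ to control the work of external forces on the kernel.

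First I would reduce the in-plane displacement modulo infinitesimal rigid motions: since $\E(\u)=\E(\u-\mathcal R(\u))$ and $\mathbf f_h=f_h\mathbf n$ is a constant-times-normal field, the compatibility condition $\int_{\partial\Omega}\mathbf f_h\cdot\mathbf r\,d\H^1=0$ for every $\mathbf r\in\mathcal R$ follows from the divergence theorem (a constant vector field times $\mathbf n$ integrates to zero on a closed boundary, and $x_1n_2-x_2n_1$ integrates to zero as well), so $\F_h(\u,w)=\F_h(\u-\mathcal R(\u),w)$. Hence we may assume the minimizing sequence satisfies $\mathcal R(\u_k)=0$, and by the generalized Korn inequality $\|\u_k\|_{H^1}\le C\|\E(\u_k)\|_{L^2}$. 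Next, integrating by parts the load term on $w$ and using \eqref{nullres} to subtract off affine parts, $\int_\Omega g_h w\,d\xx=\int_\Omega g_h(w-P_1w)\,d\xx$ where $P_1w$ is the $L^2$-projection onto affine functions; combined with the coercivity \eqref{coerc} for the bending energy $\frac{h^3}{12}\int_\Omega J(D^2w)\,d\xx\ge c\,h^3\|D^2w\|_{L^2}^2$ and a Poincaré-type inequality controlling $\|w-P_1w\|_{L^2}$ by $\|D^2w\|_{L^2}$, the out-of-plane load is absorbed. The genuinely delicate part is the membrane term: writing $\mathbb D(\u_k,w_k)=\E(\u_k)+\tfrac12 Dw_k\otimes Dw_k$ and using \eqref{coerc} we get $F_h^m\ge c\,h\int_\Omega|\mathbb D(\u_k,w_k)|^2$, and the work term $-h\int_{\partial\Omega}f_h\,\mathbf n\cdot\u_k\,d\H^1 = -h f_h\int_\Omega \dv\u_k\,d\xx = -hf_h\int_\Omega\tr\E(\u_k)\,d\xx$. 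Substituting $\tr\E(\u_k)=\tr\mathbb D(\u_k,w_k)-\tfrac12|Dw_k|^2$, the boundary work becomes $-hf_h\int_\Omega\tr\mathbb D\,d\xx+\tfrac{hf_h}{2}\int_\Omega|Dw_k|^2\,d\xx$; the first term is controlled by $\varepsilon\|\mathbb D\|_{L^2}^2 + C_\varepsilon$ via Young's inequality and absorbed into $F_h^m$, while the second term is $\ge0$ precisely because $f_h\ge0$, and is therefore harmless — it even helps coercivity. This yields $\F_h(\u_k,w_k)\ge c\,h\|\mathbb D(\u_k,w_k)\|_{L^2}^2 + c\,h^3\|D^2w_k\|_{L^2}^2 - C$, so along a minimizing sequence $\|\mathbb D(\u_k,w_k)\|_{L^2}$ and $\|D^2w_k\|_{L^2}$ are bounded.

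From these bounds I would extract convergent subsequences: $\|D^2w_k\|_{L^2}$ bounded plus $\|w_k-P_1w_k\|$ controlled gives, after also fixing the affine part of $w_k$ (using \eqref{nullres} to see the affine part costs nothing in the functional), $w_k\rightharpoonup w$ weakly in $H^2(\Omega)$, hence $w_k\to w$ strongly in $W^{1,4}(\Omega)$ by Rellich, so $Dw_k\otimes Dw_k\to Dw\otimes Dw$ strongly in $L^2$. Then boundedness of $\|\mathbb D(\u_k,w_k)\|_{L^2}$ forces $\|\E(\u_k)\|_{L^2}$ bounded, whence by Korn $\u_k\rightharpoonup\u$ weakly in $H^1(\Omega,\R^2)$ and $\E(\u_k)\rightharpoonup\E(\u)$ weakly in $L^2$, so $\mathbb D(\u_k,w_k)\rightharpoonup\mathbb D(\u,w)$ weakly in $L^2$. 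Finally, weak lower semicontinuity: $J$ is a nonnegative convex quadratic form in its matrix argument by \eqref{density}, so $\mathbb D\mapsto\int_\Omega J(\mathbb D)$ and $\mathbb M\mapsto\int_\Omega J(\mathbb M)$ are weakly l.s.c.\ on $L^2$; the load terms pass to the limit by strong convergence ($\u_k\to\u$ in $L^2(\partial\Omega)$ by compactness of the trace, $w_k\to w$ in $L^2(\Omega)$). Therefore $\F_h(\u,w)\le\liminf\F_h(\u_k,w_k)=\inf$, and $(\u,w)$ is the desired minimizer.

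The main obstacle is the coercivity step, specifically ensuring the quadratic term $\tfrac12 Dw_k\otimes Dw_k$ hidden inside $\mathbb D$ does not destabilize the estimate: one must be careful that after using Young's inequality to absorb $-hf_h\int_\Omega\tr\mathbb D\,d\xx$ there remains a strictly positive coefficient in front of $\|\mathbb D(\u_k,w_k)\|_{L^2}^2$, and — crucially — that the leftover term $+\tfrac{hf_h}{2}\int_\Omega|Dw_k|^2$ has the right (nonnegative) sign, which is exactly where the hypothesis $f_h\ge0$ enters and cannot be dispensed with. Everything else (Korn, Poincaré, Rellich, convexity/l.s.c.\ of $J$) is standard; this is precisely the ``tuning of techniques from \cite{BBGT},\cite{BT}'' alluded to in the introduction.
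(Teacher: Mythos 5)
Your direct-coercivity route is genuinely different from the paper's argument, which instead argues by contradiction: it supposes $\lambda_n:=\|\E(\u_n)\|_{L^2}\to\infty$, rescales $\v_n=\lambda_n^{-1}\u_n$, $\zeta_n=\lambda_n^{-1/2}w_n$, and derives a contradiction from $\mathbb D(\v,\zeta)={\mathbb O}$ together with the sign of $f_h$ (the technique of \cite{BBGT},\cite{BT}). Your approach is more elementary, and most of it is sound: the compatibility of $\mathbf f_h=f_h\mathbf n$ with $\mathcal R$, the absorption of $\int_\Omega g_h w$ via \eqref{nullres} and Poincar\'e, the algebraic identity $\tr\E(\u)=\tr\mathbb D(\u,w)-\frac12|Dw|^2$ and the role of $f_h\ge 0$ in making the leftover $+\frac{hf_h}{2}\int_\Omega|Dw|^2$ harmless, and the final lower-semicontinuity step are all correct.

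There is, however, a genuine gap in the sentence ``after also fixing the affine part of $w_k$ (using \eqref{nullres} to see the affine part costs nothing in the functional).'' Replacing $w_k$ by $w_k-P_1w_k$ while leaving $\u_k$ untouched does \emph{not} leave $\F_h$ unchanged: \eqref{nullres} only kills the linear coupling $\int_\Omega g_h w$, while $Dw_k$ also enters the membrane energy quadratically through $\mathbb D(\u_k,w_k)=\E(\u_k)+\frac12 Dw_k\otimes Dw_k$, and dropping the linear part of $w_k$ changes $F_h^m$. So from your coercivity bound you control $\|\mathbb D(\u_k,w_k)\|_{L^2}$ and $\|D^2w_k\|_{L^2}$, but the step to ``$\|\E(\u_k)\|_{L^2}$ bounded'' requires $\|Dw_k\|_{L^4}$ bounded, and $\|D^2w_k\|_{L^2}$ alone does not give that because the mean of $Dw_k$ is uncontrolled. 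Two repairs are available. If $f_h>0$, simply keep the positive term $+\frac{hf_h}{2}\int_\Omega|Dw_k|^2$ you discarded: it bounds $\|Dw_k\|_{L^2}$, whence $\|Dw_k\|_{H^1}$ and (by Sobolev embedding) $\|Dw_k\|_{L^4}$ are bounded, and no normalization of the affine part is needed. For general $f_h\ge0$, one must use the genuine gauge transformation of the F\"oppl--von K\'arm\'an stretching tensor,
\begin{equation*}
(\u,w)\ \longmapsto\ \Bigl(\u - w\,\mathbf a - \tfrac12(\mathbf a\otimes\mathbf a)\,\xx,\ \ w + \mathbf a\cdot\xx\Bigr),
\qquad \mathbf a:=-\,|\Omega|^{-1}\!\int_\Omega Dw\,d\xx,
\end{equation*}
which leaves both $\mathbb D(\u,w)$ and $D^2w$ unchanged; the transverse load term is invariant by \eqref{nullres}, and a short divergence-theorem computation shows the boundary work term changes exactly by $-\tfrac12\,h\,f_h\,|\Omega|\,|\mathbf a|^2\le 0$ — this is the second place where $f_h\ge0$ must re-enter. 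Only after one of these repairs can you legitimately take $\int_\Omega Dw_k\,d\xx=0$ along a (modified, still minimizing) sequence and close the $L^4$ estimate on $Dw_k$; the rest of your argument then goes through and gives a proof that is arguably more transparent than the paper's rescaling argument.
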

\begin{proof} In order to achieve the proof it will be enough to show
a minimizing sequence
equibounded in $H^1(\Omega,\R^2)\times H^2(\Omega)$, since $\F_{h}$ is
sequentially l.s.c. with respect the weak convergence in such space.
Due to  $\inf_{H^{1}\times H^{2}} \F_{h}\le \F_{h}({\bf 0},0)\le 0$,  if $\F_{h}(\u_{n},w_{n})\to \inf_{H^{1}\times H^{2}} \F_{h}$ we may suppose $\F_{h}(\u_{n},w_{n})\le 1$ so,
by Divergence Theorem, \eqref{tractionload} and  \eqref{coerc} we also get
\begin{equation}\label{minseq}
\displaystyle c_\nu\frac{h^3 \,E}{24}\int_\Omega\vert D^2 w_{n}\vert^2+
c_\nu\,\frac{h \,E }{2}\int_\Omega\,|{\mathbb{D}}(\u_n,w_{n})|^{2}
 \leq hf_{h}\int_\Omega\hbox{\rm div }\u_{n}+h\int_\Omega g_{h}w_{n}+1.
\end{equation}
Set $\lambda_{n}\!:=\!\|\mathbb E(\u_{n})\|_{L^{2}}$ and suppose by contradiction that  $\sup\lambda_{n}\!=\!+\infty $, hence (up to subsequences without relabeling)
$\lambda_{n}\to +\infty$. Let $\zeta_{n}:=\lambda_{n}^{-1/2}w_{n},\ \ \v_{n}:=\lambda_{n}^{-1}\u_{n}$
and $\mathbf{x}_\Omega$ is the center of mass of $\Omega$. Possibly different constants
denoted by $C$ actually depend only on $\Omega$. Then by substituting in \eqref{minseq} and dividing times $\lambda_n$, we get via \eqref{nullres} and Poincar\`e inequality
\begin{equation}\label{minseq2}
\begin{array}{ll}
&\displaystyle c_\nu\frac{h^3 \,E}{24}\int_\Omega\vert D^2 \zeta_{n}\vert^2+
\lambda_{n}c_\nu\,\frac{h \,E }{2}\int_\Omega\,|{\mathbb{D}}(\v_n,\zeta_{n})|^{2}
 \leq \\
 &\\
 &\displaystyle \le hf_{h}\int_\Omega\hbox{\rm div }\v_{n}
 +\lambda_{n}^{-1/2}\,h\int_\Omega g_{h}\zeta_{n}+\lambda_{n}^{-1}=\\
 &\\
&\displaystyle = hf_{h}\int_\Omega\hbox{\rm div }\v_{n}
+\lambda_{n}^{-1/2}\,h\int_\Omega g_{h}\Big(\zeta_{n}-\,-\!\!\!\!\!\!\int_{\Omega}\zeta_{n}
\,-\, (\mathbf{x}-\mathbf{x}_\Omega)\!-\!\!\!\!\!\!\!\int_{\Omega}D\zeta_{n}
\Big)+\lambda_{n}^{-1}\le \\
 &\\
 &\displaystyle \le hf_{h}\int_\Omega\hbox{\rm div }\v_{n}
 +\lambda_{n}^{-1/2}\,h\,\|g_{h}\|_{L^{2}}^{2}+\lambda_{n}^{-1/2}\,C\int_\Omega\vert D^2 \zeta_{n}\vert^2+\lambda_{n}^{-1}.\\
 \end{array}
\end{equation}
The above inequality together with $\|\mathbb E(\v_{n})\|_{L^{2}}=1$ entail
\begin{equation}
\displaystyle c_\nu\frac{h^3 \,E}{24}\int_\Omega\vert D^2 \zeta_{n}\vert^2+
\lambda_{n}c_\nu\,\frac{h \,E }{2}\int_\Omega\,|{\mathbb{D}}(\v_n,\zeta_{n})|^{2}
 \leq  C
\end{equation}
for large $n$.
Exploiting $\|\mathbb E(\v_{n})\|_{L^{2}}=1,$ once more, we get  $D\zeta_{n}$ are then equibounded in $H^{1}(\Omega,\mathbb R^{2})$,
and, up to subsequences, $\zeta_{n}-\,-\!\!\!\!\!\!\int_{\Omega}\zeta_{n}\to \zeta$ weakly in $H^{2}(\Omega),$ $D\zeta_{n}\to D\zeta$ in $L^{4}(\Omega,\mathbb R^{2})$  due to Rellich Theorem and $\v_{n}\to \v$ weakly in $H^{1}(\Omega,\mathbb R^{2})$.\\
By taking into account \eqref{nullres} we get
\begin{equation}\label{rhs}
hf_{h}\!\int_\Omega\!\hbox{\rm div }\v_{n}+\lambda_{n}^{-1/2}h\!\int_\Omega g_{h}\zeta_{n}=hf_{h}\!\int_\Omega\!\hbox{\rm div }\v_{n}
+\lambda_{n}^{-1/2}h\!\int_\Omega\! g_{h}\big(\zeta_{n}\!- -\! \!\!\!\!\!\int_{\Omega}\zeta_{n}\big)\to hf_{h}\!\int_\Omega\!\!\hbox{\rm div }\v\,.
\end{equation}
By sequential lower semicontinuity together with \eqref{rhs}, \eqref{minseq2} we get
\begin{equation}\label{minseq3}\begin{array}{ll}
&\displaystyle c_\nu\frac{h^3 \,E}{24}\int_\Omega\vert D^2 \zeta\vert^2 \leq \liminf c_\nu\frac{h^3 \,E}{24}\int_\Omega\vert D^2 \zeta_{n}\vert^2\le
\vspace{0.1cm}
\\
&\displaystyle\le  \liminf  \left\{hf_{h}\int_\Omega\hbox{\rm div }\v_{n}+\lambda_{n}^{-1/2}\,h\int_\Omega g_{h}
\big(\zeta_{n}\!- -\! \!\!\!\!\!\int_{\Omega}\zeta_{n}\big)
+\lambda_{n}^{-1}\right\}=  hf_{h}\int_\Omega\hbox{\rm div }\v.\\
\end{array}
\end{equation}
Moreover, by taking into account that $\lambda_{n}\to +\infty$,
\begin{equation}\label{minseq4}
\displaystyle
\lambda_{n}c_\nu\,\frac{h \,E }{2}\int_\Omega\,|{\mathbb{D}}(\v_n,\zeta_{n})|^{2}
 \leq hf_{h}\int_\Omega\hbox{\rm div }\v_{n}+\lambda_{n}^{-1}+\lambda_{n}^{-1/2}\,h\int_\Omega g_{h}
 \big(\zeta_{n}\!- -\! \!\!\!\!\!\int_{\Omega}\zeta_{n}\big)
 \le C
\end{equation}
and  by $D\zeta_{n}\to D\zeta$ in $L^{4}(\Omega,\mathbb R^{2})$,
we have also
\begin{equation}
\displaystyle
c_\nu\,\frac{h \,E }{2}\int_\Omega\,|{\mathbb{D}}(\v,\zeta)|^{2}\le \liminf \displaystyle
c_\nu\,\frac{h \,E }{2}\int_\Omega\,|{\mathbb{D}}(\v_n,\zeta_{n})|^{2}\le C \liminf \lambda_{n}^{-1}=0.
\end{equation}
Hence\\ ${\mathbb{D}}(\v_n,\zeta_{n})\to {\mathbb{D}}(\v,\zeta)=0,$  $ \mathbb E(\v_{n})\to \mathbb E(\v)$  both in $L^2(\Omega,{\rm Sym}_{2,2}(\R))$
and $2\,\hbox{\rm div }\v=-|D\zeta|^{2}$.\\
Therefore  by \eqref{minseq3}
\begin{equation}\label{minseq5}
\displaystyle c_\nu\frac{h^3 \,E}{24}\int_\Omega\vert D^2 \zeta\vert^2 +\frac{1}{2} hf_{h}\int_\Omega |D\zeta|^{2}\le 0
\end{equation}
and by taking into account that $-\! \!\!\!\!\!\int_{\Omega}\zeta=0$
we get $\zeta=0$ and $\mathbb E(\v)=0$, a contradiction since $\|\mathbb E(\v_{n})\|_{L^{2}}=1$ and $\mathbb E(\v_{n})\to \mathbb E(\v)$ in $L^2(\Omega,{\rm Sym}_{2,2}(\R))$.
So $\lambda_{n}\le C$ for some $C>0$ and $\u_{n}-\mathcal R (\u_{n})$ are equibounded in $H^{1}(\Omega, \mathbb R^{2})$ by Korn inequality, while equiboundedness of $w_{n}-\,-\! \!\!\!\!\!\int_{\Omega}w_{n}$  in $H^{2}(\Omega)$ follows from \eqref{minseq}.  Existence of minimizers is then straightforward via direct method.
\end{proof}
If $f < 0$  then the analogous of Theorem \ref{thm global traction} for in-plane compression along the whole boundary
cannot be true, as shown by the next particularly telling Counterexample \ref{countexunifcompr}.
Anyway we can deal also with load corresponding to small negative $f$, as shown by
Theorem \ref{thm global traction0} below.
\begin{counterex}\label{countexunifcompr}
\textbf{(uniform boundary compression)}.\par\noindent
{\rm Assume
\begin{equation}
\label{dom}
\Omega=(-2,2)\times(-1,1)\,, \qquad \Gamma=\{-2\}\times[-1,1],\ \ g_{h}\equiv 0
\end{equation}
\begin{equation}\label{compression in-plane load}
{\bf f}_h=f_{h}{\bf n} \ \ \hbox{on }\partial \Omega,\qquad \hbox{where $f_{h}$
 is a given constant s.t. \ $\ f_h<- \frac {{C_\nu}E}{64}h^{2}$}.
\end{equation}
Then $\inf\F_{h}=-\infty\,$ over both
$\,H^1(\Omega,\R^2)\times\mathcal A^{1}\,$ and
$\,H^1(\Omega,\R^2)\times\mathcal A^{2}$.
\vskip0.2cm
Indeed, let
$$\u=-\frac{(2+x_1)^3}{6}\e_1, \:\:\:\:\:\varphi=\frac{(2+x_1)^2}{2},$$
and $\u_{n}:=n\u,\ \ \varphi_{n}:=\sqrt n \varphi$;
then $2\E(\u_{n})=-D\varphi_{n}\otimes D\varphi_{n}$ and by \eqref{coerc}
\begin{eqnarray*}
\displaystyle\F_{h}(\u_{n},\varphi_{n})&\!\!\leq\!\!&
 \frac{h^3 C_\nu \,nE}{24}\int_\Omega\vert D^2\varphi\vert^2\,d\xx-\,nh\,f_{h}\int_{\partial\Omega}{\bf n}\cdot\u\,d\H^1=\\
&&\\
&\!\!=\!\!&\displaystyle\frac{h^3 C_\nu nE}{24}\int_\Omega\vert D^2\varphi\vert^2\,d\xx-\,nh\,f_{h}\int_{\Omega}\dv\u\,d\xx=\\
&&\\
&\!\!=\!\!&\displaystyle\frac{h^3 C_\nu nE}{24}\!\int_\Omega\vert D^2\varphi\vert^2\,d\xx+\frac{nhf_{h}}{2}\!\int_\Omega|D\varphi|^{2}\,d\xx=  \dfrac{nh\,C_\nu}{3}\left( h^{2}E+64\,f_{h}{C_\nu}^{-1}\right)\to -\infty\,.
\end{eqnarray*}
}
\end{counterex}
\noindent
Referring to the bounded connected Lipschitz open set $\Omega\subset \mathbb R^{2}$,
denote by $K(\Omega)$ the best constant such that\vskip-0.5cm
\begin{equation}\label{sob}
\int_{\Omega}\,\left|\vv - -\!\!\!\!\!\!\int_{\Omega}\vv\right|^{2}d\xx\ \le\ K(\Omega)\int_{\Omega}\left |D\vv\right |^{2}\,d\xx
\qquad \forall\,
\vv\in H^{1}(\Omega,\mathbb R^{2})\,.
\end{equation}
\begin{theorem}\label{thm global traction0}
\textbf{(mild uniform boundary compression of a simply supported plate)}.\par\noindent
Assume that $\Omega\subset \mathbb R^{2}$ is a  bounded connected Lipschitz open
set 
and
\begin{equation}\label{tractionloadbis}
{\bf f}_h=f_{h}{\bf n} \ \ \hbox{on }\partial \Omega
\end{equation}
where $f_{h}$ is a given constant such that
\begin{equation}\label{fh}
f_{h}> -\frac{h^{2}c_{\nu}E}{12\,K(\Omega)}.
\end{equation}
Then, for every fixed $h>0$, $\F_h$ achieves a minimum over
$H^1(\Omega,\R^2)\times H^2(\Omega)\!\cap\! H^1_0(\Omega)$\,.
\end{theorem}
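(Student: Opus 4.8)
\textbf{Proof proposal for Theorem \ref{thm global traction0}.}

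The plan is to follow the same scheme as in Theorem \ref{thm global traction}: take a minimizing sequence $(\u_n,w_n)$ for $\F_h$ over $H^1(\Omega,\R^2)\times(H^2(\Omega)\cap H^1_0(\Omega))$, normalized so that $\F_h(\u_n,w_n)\le 1$ (which is legitimate since $\F_h(\mathbf 0,0)\le 0$), and prove that $\|\mathbb E(\u_n)\|_{L^2}$ stays bounded; from there Korn's inequality, Poincar\'e's inequality and the coercivity \eqref{coerc} of $J$ give equiboundedness in the energy space modulo rigid displacements, and the direct method finishes the argument because $\F_h$ is sequentially weakly lower semicontinuous. As before, using the Divergence Theorem on $\int_{\partial\Omega}f_h\,{\bf n}\cdot\u_n = f_h\int_\Omega \dv\u_n$ together with $2\dv\u_n = 2\tr\mathbb E(\u_n)$ and $\tr\mathbb D(\u_n,w_n)=\tr\mathbb E(\u_n)+\tfrac12|Dw_n|^2$, one rewrites the load term so that the sign of $f_h$ can be absorbed into the membrane and bending terms.

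The key new point, compared with the traction case $f_h\ge 0$, is that now the work of the compressive load can have the ``wrong'' sign, and one must show that the positive bending and membrane contributions still dominate. Here the homogeneous Dirichlet condition $w_n\in H^1_0(\Omega)$ is essential: it lets one write $\int_\Omega|Dw_n|^2$ in terms of $\|D^2 w_n\|_{L^2}$ via the Sobolev/Poincar\'e constant $K(\Omega)$ of \eqref{sob} applied to $\vv=Dw_n$ (note $-\!\!\!\!\int_\Omega Dw_n = 0$ when $w_n$ vanishes on $\Gamma$ with $\H^1(\Gamma)>0$, after a Poincar\'e–type argument, so $\int_\Omega|Dw_n|^2\le K(\Omega)\int_\Omega|D^2 w_n|^2$). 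Combining this with $\int_\Omega|D^2w_n|^2\le \tfrac{2}{c_\nu E}F_h^b(w_n)\cdot\tfrac{12}{h^3}$, the compressive load term $-hf_h\int_\Omega\dv\u_n$ is controlled, and the smallness hypothesis \eqref{fh}, namely $f_h > -\dfrac{h^2 c_\nu E}{12\,K(\Omega)}$, is exactly what makes the coefficient $\dfrac{c_\nu h^3 E}{24} + \dfrac{h f_h K(\Omega)}{2}$ (or the analogous combination arising from the completion-of-square argument) strictly positive. I would carry out the rescaling $\zeta_n:=\lambda_n^{-1/2}w_n$, $\v_n:=\lambda_n^{-1}\u_n$ with $\lambda_n:=\|\mathbb E(\u_n)\|_{L^2}$, assume $\lambda_n\to+\infty$ for contradiction, pass to the limit exactly as in \eqref{minseq2}–\eqref{minseq5}, and reach $\dfrac{c_\nu h^3 E}{24}\int_\Omega|D^2\zeta|^2 + \dfrac{h f_h}{2}\int_\Omega|D\zeta|^2\le 0$ with $\zeta\in H^1_0(\Omega)$; by \eqref{sob} applied to $D\zeta$ and the strict inequality \eqref{fh}, this forces $\zeta=0$ and $\mathbb E(\v)=0$, contradicting $\|\mathbb E(\v_n)\|_{L^2}=1$ and the strong $L^2$ convergence $\mathbb E(\v_n)\to\mathbb E(\v)$.

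The main obstacle I anticipate is the bookkeeping in the limit passage to justify that $\int_\Omega\dv\v_n\to\int_\Omega\dv\v$ (weak $H^1$ convergence of $\v_n$ suffices) and that $\int_\Omega|D\zeta_n|^2\to\int_\Omega|D\zeta|^2$, for which one needs the strong $L^4$ (hence $L^2$) convergence of $D\zeta_n$ coming from Rellich's theorem applied to the $H^1$-bounded sequence $D\zeta_n$ — and, crucially, keeping track that the Dirichlet condition survives the passage to the limit so that \eqref{sob} can be invoked on the limit $D\zeta$. A secondary subtlety is verifying that $w_n\in H^1_0(\Omega)$ together with $\H^1(\Gamma)>0$ does give the Poincar\'e inequality needed to bound $\int_\Omega|Dw_n|^2$ without a mean-value correction; once these points are in place, the rest is a routine adaptation of the proof of Theorem \ref{thm global traction}.
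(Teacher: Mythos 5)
Your proposal follows the paper's proof essentially line for line: same rescaling $\lambda_n=\|\mathbb E(\u_n)\|_{L^2}$, $\v_n=\lambda_n^{-1}\u_n$, $\zeta_n=\lambda_n^{-1/2}w_n$, same contradiction structure producing \eqref{minseq5bis}, and the same use of \eqref{sob} on $D\zeta$ combined with the strict inequality \eqref{fh} to force $D\zeta=0$ and hence $\mathbb E(\v)=0$. The only slip is a cosmetic one in your aside: $-\!\!\!\!\int_\Omega Dw_n=0$ is not a Poincar\'e-type fact about $w_n$ vanishing on a $\Gamma$ of positive $\H^1$-measure — it is a direct consequence of the divergence theorem once $w_n\in H^1_0(\Omega)$, i.e.\ $w_n$ has trace zero on all of $\partial\Omega$ (which is exactly the setting here since $\Gamma=\partial\Omega$); this is also what dispenses with \eqref{nullres} in controlling the $g_h$ term, as the paper notes.
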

\begin{proof}
Here, by setting $\Gamma=\partial \Omega$, we have $\mathcal{A}^1=H^2(\Omega)\cap H^1_0(\Omega)$.
Let $\F_{h}(\u_{n},w_{n})\to \inf_{H^{1}\times\A^{1}}\F_{h}$ and assuming by contradiction that $\|\mathbb E(\u_{n})\|\!\to\! +\infty$. By arguing as in the proof of Theorem\,\ref{thm global traction}
we can build a sequence $(\v_{n},\zeta_{n})\!\to\! (\v,\zeta)$ weakly in $H^{1}(\Omega, \mathbb R^{2})\!\times\! H^{2}(\Omega)$, $\|\mathbb E(\v_{n})\|\!=\!1,$ $ {\mathbb{D}}(\v_n,\zeta_{n})\to {\mathbb{D}}(\v,\zeta)=\mathbb{O},$  $ \mathbb E(\v_{n})\to \mathbb E(\v)$  both in
$L^2(\Omega,{\rm Sym}_{2,2}(\R))$,
$2\,\hbox{\rm div }\v=-|D\zeta|^{2}$ and
\begin{equation}\label{minseq5bis}
\displaystyle c_\nu\frac{h^3 \,E}{24}\int_\Omega\vert D^2 \zeta\vert^2 +\frac{1}{2} hf_{h}\int_\Omega |D\zeta|^{2}\le 0\, ;
\end{equation}
we emphasize that $\zeta_n=0$ at $\partial\Omega$ entails $-\!\!\!\!\!\int_\Omega D\zeta_n=0$, therefore
$|\int_\Omega g_h\zeta_n|\leq C\,\|g_h\|_{L^2}\,\|D^2\zeta_n\|_{L^2}$ for a suitable constant $C=C(\Omega)$; hence \eqref{minseq5bis} can be achieved even without assuming \eqref{nullres}.
\\
Therefore by taking into account that $\int_{\Omega} D\zeta =0$ (due to $\zeta\in H^1_0$), Poincar\`e inequality \eqref{sob} and assumption \eqref{fh} altogether entail
\begin{equation}\label{minseq5tris}
\displaystyle c_\nu\frac{h^3 \,E}{24 K(\Omega)}\int_\Omega |D\zeta|^{2}+\frac{1}{2} hf_{h}\int_\Omega |D\zeta|^{2}\le c_\nu\frac{h^3 \,E}{24}\int_\Omega\vert D^2 \zeta\vert^2 +\frac{1}{2} hf_{h}\int_\Omega |D\zeta|^{2}\le 0\, ,
\end{equation}
So $D\zeta=0$ and, by $\mathbb{D}(\vv,\zeta)=\mathbb{O}$, $\mathbb E(\v)=\mathbb{O}$, that is a contradiction since $\|\mathbb E(\v_{n})\|_{L^{2}}=1$ and $\mathbb E(\v_{n})\to \mathbb E(\v)$ in $L^2(\Omega,{\rm Sym}_{2,2}(\R))$.
The claim follows by repeating last part of Theorem \ref{thm global traction} proof: here transverse load balancing \eqref{nullres} is not needed, due to boundary condition $\mathcal{A}^1$.
\end{proof}
\begin{remark}
\label{Thm1.3'}
\rm{By inspection of the proof of Theorem \ref{thm global traction0} we deduce also existence theorems
for a plate clamped 
on a possibly proper subset $\Gamma$ of the boundary.
Precisely, assuming $\Omega$ bounded, connected, Lipschitz, \eqref{Gamma}, \eqref{tractionloadbis} with $f_h>-{(h^2c_\nu E)}/{(12\,\widetilde K(\Omega,\Gamma))}$, where $\widetilde K(\Omega,\Gamma)$ is the best constant s.t. $\int_{\Omega}\big |\vv |^{2}\,dx\le K(\Omega,\Gamma))\left\{\int_{\Omega}\left |D\vv\right |^{2}\,dx+\int_{\Gamma}\big |\vv |^{2}\,d\H^{1}\right\}$, then $\mathcal{F}_h$ achieves a minimum over
$H^1(\Omega,\R^2)\times \mathcal{A}^0(\Gamma)$.
\\ Similar claims in\,$H^1(\Omega,\R^2)\times \mathcal{A}^1(\Gamma)$ (for plates supported on\,$\Gamma$) fail,
even by adding assumption $\int_{\Omega}x_1g_{h}d\xx=\!\int_{\Omega}x_2g_{h}d\xx=0$. Indeed, if $\Omega\!=\!(0,1)^2,\,\Gamma\!=\!\{0\}\times[0,1],$
$g_h\equiv 0,\, \mathbf{f}_h=-\lambda^2h^2\mathbf{n}$, then $\inf \mathcal{F}_h=-\infty$, as shown by $\,\uu=-(1/6)(x_1+m)^3\,\mathbf{e}_1\,,\ w_m=\big((x_1+m)^2-m^2\big)/2,\ m\in\mathbb{N}.$}
\end{remark}
Concerning existence of minimizers for $\F_{h}$ in $H^1(\Omega,\R^2)\times\mathcal A^{i}$ for $i=0,1$,
when $\Gamma=\partial \Omega$, that is for clamped and simply supported plates respectively at the whole boundary, in presence of boundary forces which fulfils neither condition \eqref{tractionload} nor conditions \eqref{tractionloadbis}-\eqref{fh} we need to state first  the following Lemma (see also \cite[Proposition 9]{FJM}) which clarifies the link between $\hbox{ker}\, \mathbb D$ and the solutions of the {\it Monge-Amp\`ere} equation in $\Omega$.
\begin{lemma}
\label{det} Let $\Omega \subset \mathbb R^{2}$ be  an open set and assume that  $\u\in H^{1}(\Omega,\mathbb R^{2}), \varphi\in H^{2}(\Omega)$ satisfy
\vskip0.1cm
\centerline{$2\mathbb E(\u)+D\varphi\otimes D\varphi\ =\ 0\ $ in $\ \Omega$\,.}
\vskip0.1cm
Then
$\hbox {\rm det}D^{2}\varphi\equiv 0$ in $\Omega$, where ${\rm det }D^{2}\varphi$ is  the pointwise hessian of $\varphi$.
\end{lemma}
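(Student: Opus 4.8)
The plan is to show that the quadratic form $D^2\varphi$ is degenerate at almost every point by exploiting the algebraic structure of the constraint $2\mathbb E(\u)=-D\varphi\otimes D\varphi$. First I would observe that, formally, the right-hand side is a rank-one symmetric matrix, so $\mathbb E(\u)=-\tfrac12 D\varphi\otimes D\varphi$ has determinant zero pointwise; the content of the lemma is that the \emph{distributional} Hessian determinant of $\varphi$ (the Monge--Amp\`ere measure) also vanishes, which is the genuinely delicate point because $\varphi\in H^2$ only, so $D^2\varphi\in L^2$ and $\det D^2\varphi\in L^1$ a priori, and one must be careful that no concentration occurs.

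The core identity I would use is the classical commutation relation for the Hessian determinant: for any smooth $\varphi$,
\begin{equation*}
2\,\det D^2\varphi \ =\ \operatorname{div}\!\big( \varphi_{x_1}\varphi_{x_2 x_2} - \varphi_{x_2}\varphi_{x_1 x_2},\ \varphi_{x_2}\varphi_{x_1 x_1} - \varphi_{x_1}\varphi_{x_1 x_2}\big),
\end{equation*}
and more to the point, $\det D^2\varphi$ can be written as $\partial_{x_1}\partial_{x_2}(\varphi_{x_1}\varphi_{x_2}) - \tfrac12\partial_{x_1}^2(\varphi_{x_2}^2) - \tfrac12\partial_{x_2}^2(\varphi_{x_1}^2)$ — i.e.\ a second-order divergence of products of first derivatives of $\varphi$, each of which lies in $H^1\cap L^\infty_{\mathrm{loc}}$-type spaces well enough (by Sobolev embedding $\varphi\in W^{1,p}$ for all finite $p$ in 2D) for the products $\varphi_{x_i}\varphi_{x_j}$ to be in $W^{1,1}_{\mathrm{loc}}$. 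The key step is then: from $2\mathbb E(\u)=-D\varphi\otimes D\varphi$ we read off the three scalar equations $2u_{1,x_1}=-\varphi_{x_1}^2$, $2u_{2,x_2}=-\varphi_{x_2}^2$, and $u_{1,x_2}+u_{2,x_1}=-\varphi_{x_1}\varphi_{x_2}$. Substituting these into the distributional expression for $\det D^2\varphi$ above gives
\begin{equation*}
2\det D^2\varphi \ =\ -2\,\partial_{x_1}\partial_{x_2}\big(u_{1,x_2}+u_{2,x_1}\big) + 2\,\partial_{x_1}^2 u_{2,x_2} + 2\,\partial_{x_2}^2 u_{1,x_1}
\end{equation*}
in the sense of distributions, and the right-hand side is identically zero because mixed partials commute: $\partial_{x_1}\partial_{x_2} u_{1,x_2} = \partial_{x_2}^2 u_{1,x_1}$ and $\partial_{x_1}\partial_{x_2}u_{2,x_1} = \partial_{x_1}^2 u_{2,x_2}$ as distributions (Schwarz's theorem for distributional derivatives). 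Hence $\det D^2\varphi = 0$ as a distribution, and since it coincides with the $L^1$ pointwise Hessian determinant (this is exactly the regularity check that the products $\varphi_{x_i}\varphi_{x_j}$ are regular enough — cf.\ \cite[Proposition 9]{FJM}), we conclude $\det D^2\varphi\equiv 0$ a.e.\ in $\Omega$.

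The main obstacle is precisely the justification that the distributional Hessian determinant equals the pointwise one for $\varphi\in H^2(\Omega)$, and that the integration-by-parts manipulations above are legitimate at this regularity — one cannot differentiate $L^2$ functions freely, so the argument must be run by mollification: approximate $\varphi$ by smooth $\varphi_\eps$, write the identity $2\det D^2\varphi_\eps = \partial_{x_1}\partial_{x_2}(2\varphi_{\eps,x_1}\varphi_{\eps,x_2}) - \partial_{x_1}^2(\varphi_{\eps,x_2}^2) - \partial_{x_2}^2(\varphi_{\eps,x_1}^2)$, test against $\psi\in C_c^\infty(\Omega)$, and pass to the limit using that $D\varphi_\eps\to D\varphi$ in every $L^p(\Omega')$, $p<\infty$, on relatively compact $\Omega'$ (2D Sobolev embedding of $H^1$), so the products converge in $L^2_{\mathrm{loc}}$; simultaneously $D^2\varphi_\eps\to D^2\varphi$ in $L^2_{\mathrm{loc}}$, giving $\det D^2\varphi_\eps\rightharpoonup \det D^2\varphi$ weakly in $L^1_{\mathrm{loc}}$ (a product of a strongly and a weakly converging sequence in the relevant pairing). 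Once the identity is established at the level of distributions, substitution of the constraint equations for $\u$ is a purely formal bookkeeping step that is likewise justified by the same mollification/passage-to-the-limit scheme, using only that $\u\in H^1$. This is the standard "div–curl"–flavored argument underlying the equivalence between kernel of the von K\'arm\'an membrane operator and solutions of the homogeneous Monge--Amp\`ere equation.
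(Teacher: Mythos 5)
Your proposal is correct and takes essentially the same route as the paper. Both arguments amount to writing the distributional Hessian determinant of $\varphi$ as a second-order divergence of products of first derivatives, substituting the constraint $2\mathbb E(\u)=-D\varphi\otimes D\varphi$ so that the result reduces to the Saint-Venant compatibility identity $\E_{11,22}+\E_{22,11}=2\E_{12,12}$ (which is exactly your Schwarz-commutation step), and then invoking the coincidence of the distributional and pointwise Hessian determinants for $\varphi\in H^2(\Omega)$.
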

\begin{proof}
Since $\E(\u)$ satisfies the compatibility equation
$$\E_{11,22}+\E_{22,11}=2\E_{12,12}$$
in the sense of ${\mathcal D}'(\Omega),$ we get
\begin{equation*}
\int_{\Omega}\psi_{,2}(\E_{11,2}-\E_{12,1})+\psi_{,1}(\E_{22,1}-\E_{12,2})\,d\xx\ =\ 0\,,
\qquad \forall \psi\in C^{\infty}_{0}(\Omega)\,.
\end{equation*}
Therefore since
$D\varphi\otimes D\varphi = -2\E(\u)$ we get
\begin{eqnarray*}
  \E_{22,1}  &=& -\,\varphi_{,2}\,\varphi_{,12} \\
  \E_{12,2}  &=& -\frac 1 2 \,\varphi_{,2}\,\varphi_{,12}\,-\,\frac 1 2 \,\varphi_{,1}\,\varphi_{,22} \\
  \E_{11,2}  &=& -\,\varphi_{,1}\,\varphi_{,12} \\
  \E_{12,1}  &=& -\frac 1 2 \,\varphi_{,2}\,\varphi_{,11}\,-\,\frac 1 2 \,\varphi_{,1}\,\varphi_{,12}\ .
\end{eqnarray*}
Summarizing
$$\frac 1 2 \int_{\Omega}\psi_{,2}(\varphi_{,11}\varphi_{,2}
-\varphi_{,1}\varphi_{,21})+\psi_{,1}(\varphi_{,1}\varphi_{,22}-\varphi_{,2}\varphi_{,21})\,d\xx=0
\,,
\qquad \forall \psi\in C^{\infty}_{0}(\Omega)\,.$$
that is ${\rm Det }D^{2}\varphi=0$ where $\hbox{Det} D^{2}\varphi$ is the distributional hessian  of $\varphi$. \\
Since $\varphi\in H^{2}(\Omega)$ we have ${\rm det }D^{2}\varphi=\hbox{Det} D^{2}\varphi=0$ in $\Omega$.
\end{proof}
We are now in a position to state and prove an existence theorem for simply supported plates, whose proof relies on a result by Rauch \& Taylor (see  \cite[Theorem 5.1]{RT}) about the Dirichlet problem for  the {\it Monge-Amp\`ere} equation (see also \cite{G}).

\begin{theorem}
\label{existence1}
(\textbf{simply supported plate})\\
If $\Omega\subset\R^2$ is bounded strictly convex and
$\mathbf{f}_h$ is an equilibrated in-plane load distribution, say
\begin{equation}
\label{equil}
\int_{\partial\Omega}\mathbf{f}_h\cdot \zz\ d\H^1=0 \qquad \forall \mathbf{z}\in \mathcal{R}\,.
\end{equation}
Then, for every fixed $h>0$, the {\bf FvK} functional $\F_h
$ in \eqref{FvK} achieves a minimum over $H^1(\Omega,\R^2)\times H^2(\Omega)\!\cap\! H^1_0(\Omega)$.
\end{theorem}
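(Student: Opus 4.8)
The plan is to follow the same direct-method strategy as in Theorems \ref{thm global traction} and \ref{thm global traction0}, with the key difference that coercivity comes neither from a sign condition on the boundary load nor from smallness of a compressive constant, but from the rigidity of the Monge-Amp\`ere equation on a strictly convex domain. So the first step is to take a minimizing sequence $(\u_n,w_n)$ for $\F_h$ over $H^1(\Omega,\R^2)\times\A^1$ with $\A^1=H^2(\Omega)\cap H^1_0(\Omega)$. Since $\F_h(\mathbf 0,0)\le 0$ (the in-plane load is equilibrated, so its work against the rigid field vanishes, and one can subtract any rigid part of $\u_n$ without changing the energy), we may assume $\F_h(\u_n,w_n)\le 1$. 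Setting $\lambda_n:=\|\mathbb E(\u_n)\|_{L^2}$, argue by contradiction that $\lambda_n\to+\infty$, rescale $\v_n:=\lambda_n^{-1}\u_n$, $\zeta_n:=\lambda_n^{-1/2}w_n$, and reproduce verbatim the estimates \eqref{minseq}--\eqref{minseq5bis}: because $\zeta_n\in H^1_0(\Omega)$ one has $-\!\!\!\!\int_\Omega D\zeta_n=0$, so the out-of-plane load term is controlled by $\|D^2\zeta_n\|_{L^2}$ and no hypothesis like \eqref{nullres} is needed. One extracts $(\v_n,\zeta_n)\rightharpoonup(\v,\zeta)$ in $H^1\times H^2$, $D\zeta_n\to D\zeta$ in $L^4$, $\|\mathbb E(\v_n)\|_{L^2}=1$, $\mathbb E(\v_n)\to\mathbb E(\v)$ in $L^2$, $\mathbb D(\v_n,\zeta_n)\to\mathbb D(\v,\zeta)=\mathbb O$, hence $2\,\dv\v=-|D\zeta|^2$, $\zeta\in H^1_0(\Omega)$, and the inequality
\begin{equation*}
c_\nu\frac{h^3 E}{24}\int_\Omega|D^2\zeta|^2+\frac12 h\int_{\partial\Omega}{\bf f}_h\cdot\v\,d\H^1\le 0 .
\end{equation*}

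The new ingredient enters here: since $2\mathbb E(\v)+D\zeta\otimes D\zeta=0$ with $\v\in H^1(\Omega,\R^2)$ and $\zeta\in H^2(\Omega)$, Lemma \ref{det} gives $\det D^2\zeta\equiv 0$ in $\Omega$; together with $\zeta=0$ on $\partial\Omega$, the Rauch--Taylor uniqueness result for the Dirichlet problem for the Monge-Amp\`ere equation on a strictly convex domain \cite[Theorem 5.1]{RT} forces $\zeta\equiv 0$. Then $D\zeta\equiv 0$, so $\mathbb E(\v)=-\tfrac12 D\zeta\otimes D\zeta=\mathbb O$, contradicting $\|\mathbb E(\v_n)\|_{L^2}=1$ with $\mathbb E(\v_n)\to\mathbb E(\v)$ in $L^2$. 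Hence $\lambda_n\le C$. From $\lambda_n\le C$ and Korn's inequality the fields $\u_n-\mathcal R(\u_n)$ are bounded in $H^1(\Omega,\R^2)$; replacing $\u_n$ by $\u_n-\mathcal R(\u_n)$ leaves $\F_h$ unchanged by \eqref{equil}. Boundedness of $w_n$ in $H^2(\Omega)\cap H^1_0(\Omega)$ follows from \eqref{minseq} (the $H^1_0$ boundary condition and Poincar\'e give control of $\|w_n\|_{H^1}$ from $\|D^2 w_n\|_{L^2}$, so no normalization $-\!\!\!\!\int_\Omega w_n$ is needed). Weak lower semicontinuity of $\F_h$ on $H^1\times H^2$ — which uses strong $L^4$ convergence of gradients to pass to the limit in the quadratic membrane term $\mathbb D(\u,w)$ — then yields a minimizer in $H^1(\Omega,\R^2)\times\A^1$.

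The main obstacle, and the only genuinely nonroutine point, is the rigidity step: one must verify that the limit pair $(\v,\zeta)$ has enough regularity ($\zeta\in H^2$, which is what makes pointwise and distributional Hessians coincide, as recorded at the end of the proof of Lemma \ref{det}) for \cite[Theorem 5.1]{RT} to apply, and that strict convexity of $\Omega$ is exactly the hypothesis under which the homogeneous Dirichlet problem $\det D^2\zeta=0$, $\zeta|_{\partial\Omega}=0$ admits only the trivial solution in the relevant class. Everything else — the contradiction scheme, the rescaling, the Korn/Poincar\'e bounds, and the semicontinuity argument — is a direct transcription of the proofs of Theorems \ref{thm global traction} and \ref{thm global traction0}, with the simplifications afforded by the boundary condition $w=0$ on all of $\partial\Omega$.
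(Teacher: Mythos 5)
Your proof is correct and follows the paper's argument exactly: direct method with the $\lambda_n\to\infty$ contradiction scheme, normalization and rescaling, and the key rigidity step via Lemma~\ref{det} together with the Rauch--Taylor uniqueness theorem for $\det D^2\zeta=0$, $\zeta|_{\partial\Omega}=0$ on a strictly convex domain, forcing $\zeta\equiv 0$ and then $\mathbb{E}(\v)=\mathbb{O}$, a contradiction. One small slip: the displayed inequality $c_\nu\tfrac{h^3E}{24}\int_\Omega|D^2\zeta|^2+\tfrac12 h\int_{\partial\Omega}\mathbf{f}_h\cdot\v\,d\H^1\le 0$ has the wrong sign and factor --- the correct analogue of \eqref{minseq3} for a general equilibrated $\mathbf{f}_h$ (not of the form $f_h\mathbf{n}$) is $c_\nu\tfrac{h^3E}{24}\int_\Omega|D^2\zeta|^2\le h\int_{\partial\Omega}\mathbf{f}_h\cdot\v\,d\H^1$ --- but this is immaterial, since neither your argument nor the paper's proof of Theorem~\ref{existence1} uses such an estimate: the contradiction comes entirely from the Monge--Amp\`ere rigidity, not from a sign condition on the boundary work.
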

\begin{proof}
Here $\Gamma\!\equiv\!\partial\Omega$ so, referring to \eqref{A}, we look for minimizers of $\mathcal{F}_h$\,over\,$H^1(\Omega,\R^2)\!\times\!\A^{1}=$\\ $H^1(\Omega,\R^2)\times H^2(\Omega)\cap H^1_0(\Omega)$.
The proof will be achieved by showing the existence of a minimizing sequence equibounded in $H^1(\Omega,\R^2)\times H^2(\Omega)$, since $\F_{h}$ is
sequentially l.s.c. with respect the weak convergence in this space.
Due to $\inf_{H^{1}\times\A^{1}} \F_{h}\le \F_{h}(\mathbf{0},0)\le 0$, hence if $\, \F_{h}(\u_{n},w_{n})\to \inf_{H^{1}\times\A^{1}} \F_{h}$ we may suppose $\, \F_{h}(\u_{n},w_{n})\le 1$. So by taking into account
 \eqref{equil} and  \eqref{coerc}   we get via Korn and Poincar\`e inequality
\begin{equation}\label{minseqsimpl}\begin{array}{ll}
&\displaystyle c_\nu\frac{h^3 \,E}{24}\int_\Omega\vert D^2 w_{n}\vert^2+
c_\nu\,\frac{h \,E }{2}\int_\Omega\,|{\mathbb{D}}(\u_n,w_{n})|^{2}\
 \leq \
 h\!\int_\Omega {\bf f}_{h}\cdot \u_{n}+h\int_\Omega g_{h}w_{n}+1\ =
 \vspace{0.1cm}\\
 &\displaystyle =
 h\!\int_\Omega \! {\bf f}_{h}\!\cdot\! \big(\u_{n} \!-\! \mathcal{R}(\u_n)\big) +h\!\int_\Omega \! \!g_{h}w_{n}+1
 \leq
   \|\mathbb E(\u_{n})\|_{\!L^{2}}\| {\bf f}_{h}\|_{\!L^{2}} \! + \! h\,\|g_{h}\|_{\!L^{2}}\| Dw_{n}\!\|_{\!L^{2}}\!+\!1.\\
 \end{array}
\end{equation}
Set $\lambda_{n}:=\|\mathbb E(\u_{n})\|_{L^{2}}$,
assume by contradiction $\lambda_n\to +\infty$ and set $\v_{n}:=\lambda_{n}^{-1}\u_{n}\, \ \zeta_{n}:=\lambda_{n}^{-1/2}w_{n}\,$. By substituting in \eqref{minseqsimpl}
and dividing times $\lambda_n$, via Poincar\`e inequality in $H^2\cap H^1_0$,  we get
\begin{equation}\label{minseqsimpl2}\begin{array}{ll}
&\displaystyle c_\nu\frac{h^3 \,E}{24}\int_\Omega\vert D^2 \zeta_{n}\vert^2+
\lambda_{n}c_\nu\,\frac{h \,E }{2}\int_\Omega\,|{\mathbb{D}}(\v_n,\zeta_{n})|^{2}
 \leq\vspace{0.1cm}\\
 &\displaystyle\le \| {\bf f}_{h}\|_{L^{2}}+\lambda_{n}^{-1/2}\,h\|g_{h}\|_{L^{2}}\| D\zeta_{n}\|_{L^{2}}+\lambda_{n}^{-1}\le\vspace{0.1cm}\\
 &\displaystyle\le C+\lambda_{n}^{-1/2}\,h\int_{\Omega}|D\zeta_{n}|^{2}\le C+\lambda_{n}^{-1/2}\,h\int_{\Omega}|D^{2}\zeta_{n}|^{2}
 \end{array}
\end{equation}
thus obtaining as in  the proof of Theorem \ref{thm global traction}
\begin{equation}\label{minseqsimpl3}
\displaystyle c_\nu\frac{h^3 \,E}{24}\int_\Omega\vert D^2 \zeta_{n}\vert^2+
\lambda_{n}c_\nu\,\frac{h \,E }{2}\int_\Omega\,|{\mathbb{D}}(\v_n,\zeta_{n})|^{2}
 \leq  C'
\end{equation}
for a suitable $C'>0$.
Since $\|\mathbb E(\v_{n})\|_{L^{2}}=1,$ $D\zeta_{n}$ are then equibounded in $H^{1}(\Omega,\mathbb R^{2})$
so, up to subsequences, $\zeta_{n}\to \zeta$ weakly in $H^{2}(\Omega),$ $D\zeta_{n}\to D\zeta$
strongly in $L^{4}(\Omega,\mathbb R^{2})$,  $\v_{n}\to \v$ weakly in $H^{1}(\Omega,\mathbb R^{2})$
and ${\mathbb{D}}(\v_{n},\zeta_n)\rightarrow 0$ strongly in $L^2(\Omega)$.
Hence
\begin{equation}\label{det0}
2\E(\v_n)+D \zeta_{n}\otimes D \zeta_{n}\rightarrow 2\E(\v)+D \zeta\otimes D \zeta={\mathbb O}\qquad\hbox{\rm strongly in }L^2(\Omega,{\rm Sym}_{2,2}(\R))
\end{equation}
and $\E(\v_n)\to  \E(\v)$ strongly in $L^2(\Omega,{\rm Sym}_{2,2}(\R))$.
Then by Lemma \ref{det} we have
${\rm det }D^{2}\zeta=0$  and by taking into account that $\Omega$ is strictly convex and $\zeta=0$ on the whole $\partial\Omega$
by uniqueness Theorem 5.1 in \cite{RT} we get  $\zeta\equiv 0$ in  $\Omega$. This implies $\E(\v)=  -\frac{1}{2}D\zeta\otimes D\zeta = \mathbb{O}$, which is a contradiction since $\|\mathbb E(\v_{n})\|_{L^{2}}=1$.
Hence $\lambda_{n}\le C$ for suitable $C>0$,  so $\u_{n}-\mathcal R (\u_{n})$ are equibounded in $H^{1}(\Omega, \mathbb R^{2})$ and equiboundedness of $w_{n}$ in $H^{2}(\Omega)$ follows from \eqref{minseqsimpl3}. Existence of minimizers is obtained via direct method.
 \end{proof}


Existence of minimizers may fail when $\Gamma\not\equiv\partial\Omega$
even if the in-plane load $\mathbf{f}_h$ is equilibrated, as shown by the next Counterexample.
\begin{counterex}\label{sheartang}\!\!\!(\textbf{buckling under in-plane shear}) 
{\rm Fix $\gamma\!>\!0,\, 
\eps\!>\!0,\, h^2\!<\!\gamma/(6EC_\nu) $ and
$$\Omega_\eps= \{(x_{1},x_{2}): |x_{1}|< 2+\eps(1-x_{2}^{2}),\  |x_{2}|< 1+\eps(4-x_{1}^{2})\}\,,$$
\begin{equation}\label{Sigma1.4}
\Gamma_\eps\ = \partial\Omega_\eps\cap \{(x_{1},x_{2}): |x_{1}-x_{2}|\ge 1\}\,,\end{equation}
\begin{equation}\label{f1}
{\bf f}_h:=\gamma\ttau\left({\bf 1}_{\Sigma^{2,\pm}}-{\bf 1}_{\Sigma^{1,\pm}}\right)\,,
\end{equation}
%
where $\ttau$ denotes the counterclockwise oriented  unit vector  tangent  to $\partial\Omega_\eps= \Sigma^{1,\pm}_\eps\cup\Sigma^{2,\pm}_\eps$ and
$$\Sigma^{1,\pm}_\eps=\{(x_{1},x_{2}):  |x_{1}|\le 2,\ x_{2}=\pm( 1+\eps(4-x_{1}^{2}))\}$$
$$\Sigma^{2,\pm}_\eps=\{(x_{1},x_{2}):  |x_{2}|\le 1,\ x_{1}=\pm( 2+\eps(1-x_{2}^{2}))\}.$$
We claim that there exists $\widetilde\eps$ such that $\,\inf\mathcal{F}_h=-\infty\,$ over $H^{1}(\Omega_{\widetilde\eps},\R^2)\times\A^{1}$
under the assumptions listed above, notwithstanding the strict convexity of $\Omega_{\widetilde\eps}$ and the fact that condition \eqref{equil} holds true.
\\
Indeed, let $\psi\in C^{1,1}(\R)$ be an even function, with $\spt \psi\subset [-1,1]$, $\psi'=-1$ in $[1/4,3/4]$ and $|\psi''|\leq 4$ in $\R$.
We set   $\varphi(x_1,x_2)=\psi(x_1-x_2)$ and define $w_{n}:=\sqrt n\varphi$ and $\u_{n}:=n\u$\,, where
$$u_2(x_1,x_2)\,=\,-u_1(x_1,x_2)\,=\ \frac{1}{2}\int_{-1}^{x_1-x_2}\vert\psi'(\tau)\vert^2\,d\tau\ .$$
By setting $\Omega_{0}:=(-2,2)\times (-1,1)\subset \Omega_\eps$,
there is $C>0$ such that for every $0<\eps\leq 1$
 \begin{equation*}\label{scarbord1}
 \left|\,
 \int_{\partial\Omega_\eps}{\bf f}_h\cdot\u\ d\mathcal{H}^{1}\ -\ \int_{\partial \Omega_{0}}{\bf f}_h\cdot\u\ d\mathcal{H}^{1}
 \, \right| \ \leq\ C\,\eps\ ,
 \end{equation*}
 hence by \eqref{f1} and there exists $\widetilde\varepsilon\in(0,1)$ such that
 \begin{equation}\label{scarbord2}
 \int_{\partial\Omega_{\widetilde\varepsilon}}{\bf f}_h\cdot\u\ d\mathcal H^{1}\ \geq\ \int_{\partial\Omega_{0}}{\bf f}_h\cdot\u\ d\mathcal H^{1}-\dfrac{\gamma}{2}\ \,= \ \,\gamma\int_{\Omega_{0}}2\,\E_{12}(\u)\ d\xx\,-\,\dfrac{\gamma}{2}\ .
 \end{equation}
So\vskip-0.4cm
$$u_{1,1}(x_1,x_2)=-\frac{1}{2}\vert\psi'(x_1-x_2)\vert^2=-\frac{1}{2}\varphi_{,1}^2,$$
$$u_{2,2}(x_1,x_2)=-\frac{1}{2}\vert\psi'(x_1-x_2)\vert^2=-\frac{1}{2}\varphi_{,2}^2,$$
$$\frac{u_{1,2}+u_{2,1}}{2}=\frac{1}{2}\left[ \frac{1}{2}  \vert\psi'(x_1-x_2)\vert^2+
\frac{1}{2}  \vert\psi'(x_1-x_2)\vert^2  \right]=\frac{1}{2}\vert\psi'(x_1-x_2)\vert^2=-\frac{1}{2}\varphi_{,1}\varphi_{,2}$$
that is  $\E(\u_{n})=-\frac{1}{2}Dw_{n}\otimes Dw_{n}$ and
moreover, by \eqref{coerc}, \eqref{scarbord2} and $ \varphi,_2=-\varphi,_1 $ we deduce
\begin{equation}\label{ineqinfty}
\begin{aligned}
&\F_{h}(\u_{n},w_{n}) \leq\
C_\nu\,\frac{h^3 \,nE}{24}\int_{\Omega_{0}}\vert D^2\varphi \vert^2\,dx
\,+\, C_\nu\,\frac{h^3 \,nE}{24}\int_{\Omega_{\widetilde\varepsilon}\setminus\Omega_{0}}\vert D^2\varphi \vert^2 d\xx+
\vspace{0.1cm}\\
&\qquad\qquad\qquad +hn\gamma\int_{\Omega_{0}}
\varphi_{,1}\varphi_{,2}d\xx+h\dfrac{\gamma}{2}\ \le
\vspace{0.25cm}\\
& \leq
 C_\nu\,\frac{8h^3 \,nE}{3}\Big(\big\vert\{(x_1,x_2)\!\in\!\Omega_{0}:\ 4\vert x_1\!-x_2\vert\leq 1 \ \,\hbox{or}\ \, 3\leq 4\vert x_1-x_2\vert\leq 4   \}\big\vert
 + |\Omega_{\widetilde\varepsilon}\setminus\Omega_{0}|\Big)+\vspace{0.15cm}\\
&
\qquad\qquad\qquad -hn\gamma \vert\{(x_1,x_2)\!\in\!\Omega_{0}\!:\! 1 \leq 4\vert x_1-x_2\vert\leq 3   \}\vert+hn\dfrac{\gamma}{2}\ \le
\vspace{0.2cm}\\
 & \le
\ 3C_\nu E h^3n\,-\,hn\dfrac{\gamma}{2} \to -\infty
 \end{aligned}
\end{equation}
\noindent
 as $n\to +\infty$ whenever $6EC_\nu\,h^2<\gamma $ thus proving the claim.\quad $\square$
}
\end{counterex}
Clearly Theorems\,\ref{thm global traction},\,\ref{thm global traction0},\,\ref{existence1} hold  for the clamped plate too: minimization in\,$H^1(\Omega,\R^2)\!\times\!\mathcal A^{0}.$
Even better, in the case of clamped plate we can drop both convexity assumption on $\Omega$
and  equilibrated out-of-plane load \eqref{nullres} as it is shown by the next result.
\begin{theorem}
\label{existence3}
(\textbf{clamped plate})\\
If  $\ \Omega$ is a  bounded connected Lipschitz open set  
and \eqref{equil} holds,
then for every fixed $h>0$ the functional $\F_h
$ in \eqref{FvK}
achieves its minimum over $H^1(\Omega,\R^2)\times H^{2}_{0}(\Omega)$.
\end{theorem}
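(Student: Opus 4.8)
The plan is to follow the pattern of the proofs of Theorems~\ref{thm global traction0} and~\ref{existence1}; the only genuinely new ingredient is a uniqueness statement for the homogeneous Monge--Amp\`ere equation which, thanks to the clamped boundary condition, dispenses with any convexity hypothesis on $\Omega$.

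First I would pick a minimizing sequence $(\u_n,w_n)\in H^1(\Omega,\R^2)\times H^2_0(\Omega)$; since $\F_h(\mathbf 0,0)\le 0$ we may assume $\F_h(\u_n,w_n)\le 1$. Rewriting the boundary-load term as $h\int_{\partial\Omega}\mathbf f_h\cdot(\u_n-\mathcal R(\u_n))$ by \eqref{equil}, and combining \eqref{coerc}, Korn's inequality and the Poincar\'e inequality $\|v\|_{L^2}\le C\|D^2v\|_{L^2}$ valid on $H^2_0(\Omega)$ — which, exactly as in Theorem~\ref{thm global traction0}, renders the out-of-plane load harmless and is why \eqref{nullres} is not needed — one absorbs $h\int_\Omega g_hw_n$ and obtains
\[
c_\nu\frac{h^3E}{48}\int_\Omega|D^2w_n|^2+c_\nu\frac{hE}{2}\int_\Omega|\mathbb D(\u_n,w_n)|^2\ \le\ C\bigl(1+\|\mathbb E(\u_n)\|_{L^2}\bigr).
\]
Setting $\lambda_n:=\|\mathbb E(\u_n)\|_{L^2}$ and arguing by contradiction that $\lambda_n\to+\infty$, the rescaling $\v_n:=\lambda_n^{-1}\u_n$, $\zeta_n:=\lambda_n^{-1/2}w_n$ leads, upon dividing by $\lambda_n$ and proceeding as in \eqref{minseqsimpl2}--\eqref{minseqsimpl3}, to $D^2\zeta_n$ bounded in $L^2$ and $\lambda_n\|\mathbb D(\v_n,\zeta_n)\|_{L^2}^2$ bounded; since $\zeta_n\in H^2_0(\Omega)$, Poincar\'e makes $\zeta_n$ bounded in $H^2_0(\Omega)$, so along a subsequence $\zeta_n\rightharpoonup\zeta$ in $H^2_0(\Omega)$, $D\zeta_n\to D\zeta$ strongly in $L^4$, $\v_n\rightharpoonup\v$ in $H^1$, $\mathbb E(\v_n)\to\mathbb E(\v)$ strongly in $L^2$ with $\|\mathbb E(\v)\|_{L^2}=1$, and $\mathbb D(\v_n,\zeta_n)\to\mathbb O$ in $L^2$, whence $2\mathbb E(\v)+D\zeta\otimes D\zeta=\mathbb O$.

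The decisive step — and the only point where $w\in H^2_0$ is used in an essential way — is to conclude $\zeta\equiv0$. By Lemma~\ref{det}, $\det D^2\zeta=0$ a.e.\ in $\Omega$. Now extend $\zeta$ by zero to an open ball $B$ containing $\overline\Omega$: since $\zeta=\frac{\partial\zeta}{\partial{\bf n}}=0$ on $\partial\Omega$, the extension belongs to $H^2_0(B)$, and $\det D^2\zeta=0$ a.e.\ in $B$ as well (it vanishes in $\Omega$ and in $B\setminus\overline\Omega$, where $\zeta\equiv0$). Since $B$ is strictly convex and $\zeta\in H^2(B)\cap H^1_0(B)$ solves the homogeneous Monge--Amp\`ere equation, the uniqueness result \cite[Theorem~5.1]{RT} gives $\zeta\equiv0$ in $B$, hence in $\Omega$; then $\mathbb E(\v)=-\tfrac12D\zeta\otimes D\zeta=\mathbb O$, contradicting $\|\mathbb E(\v)\|_{L^2}=1$. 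Therefore $\lambda_n\le C$: Korn yields $\u_n-\mathcal R(\u_n)$ bounded in $H^1(\Omega,\R^2)$ and the displayed inequality yields $w_n$ bounded in $H^2_0(\Omega)$. Replacing $\u_n$ by $\u_n-\mathcal R(\u_n)$, which by \eqref{equil} leaves $\F_h$ unchanged, a subsequence converges weakly in $H^1(\Omega,\R^2)\times H^2_0(\Omega)$ to a minimizer, by the sequential weak lower semicontinuity of $\F_h$ (convexity of $J$, compactness of the trace embedding $H^1(\Omega)\hookrightarrow L^2(\partial\Omega)$, and compactness of $H^2(\Omega)\hookrightarrow W^{1,4}(\Omega)$ to pass to the limit in $Dw\otimes Dw$).

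The main obstacle is precisely this uniqueness step: for non-convex $\Omega$ one cannot invoke \cite{RT} directly, and it is only the clamped condition — annihilating both $\zeta$ and its normal derivative on $\partial\Omega$ — that permits the zero-extension to a ball and thereby reduces matters to the strictly convex case already settled in Theorem~\ref{existence1}.
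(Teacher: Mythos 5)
Your proof is correct and follows essentially the same route as the paper: rescaling a supposedly unbounded minimizing sequence, passing to the limit to get $\det D^2\zeta=0$ via Lemma~\ref{det}, and then — the decisive step, identical to the paper's — extending $\zeta$ by zero to a strictly convex ball (a disk, in the paper's language), where the clamped condition $\zeta=\partial\zeta/\partial\mathbf n=0$ on $\partial\Omega$ guarantees membership in $H^2_0$ of the ball, so that \cite[Theorem 5.1]{RT} forces $\zeta\equiv0$ and yields the contradiction. The only cosmetic difference is that you spell out the absorption of the transverse load term and the final lower-semicontinuity argument in slightly more detail than the paper does.
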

\begin{proof} Again we need only to exhibit an equibounded minimizing sequence. Indeed,
as in the proof of Theorem \ref{existence1} if $\, \F_{h}(\u_{n},w_{n})\to \inf_{H^{1}\times H^{2}_{0}} \F_{h}$ we may suppose $\, \F_{h}(\u_{n},w_{n})\le 1$.
Then, since $\Gamma\!=\!\partial\Omega$ entails $H^{2}_{0}(\Omega)\!=\!\A^{0}\!\subset \! \A^{1}$, by setting $\lambda_{n}\!:=\!\|\mathbb E(\u_{n})\|_{L^{2}}$,
$\v_{n}\!:=\!\lambda_{n}^{-1}\u_{n}\,, \, \zeta_{n}\!:=\!\lambda_{n}^{-1/2}w_{n}\,$  and assuming $\lambda_{n}\to +\infty$, arguing as in the previous proofs we achieve
the estimates \eqref{minseqsimpl}, \eqref{minseqsimpl2}, \eqref{minseqsimpl3}. Then the sequence $D\zeta_{n}$ is equibounded in $H^{1}(\Omega,\mathbb R^{2})$
so, up to subsequences, $\zeta_{n}\to \zeta$ weakly in $H^{2}(\Omega),$ $D\zeta_{n}\to D\zeta$ in $L^{4}(\Omega,\mathbb R^{2})$,  $\v_{n}\to \v$ weakly in $H^{1}(\Omega,\mathbb R^{2})$
and ${\mathbb{D}}(\v_{n},\zeta_n)\rightarrow \mathbb{O}$ in $L^2(\Omega,{\rm Sym}_{2,2}(\R))$.
Hence
\begin{equation}\label{det00}
2\E(\v_n)+D \zeta_{n}\otimes D \zeta_{n}\rightarrow 2\E(\v)+D \zeta\otimes D \zeta=
\mathbb{O}\qquad\hbox{\rm strongly in }L^2(\Omega,{\rm Sym}_{2,2}(\R)),
\end{equation}
$\E(\v_n)\to  \E(\v)$ strongly in $L^2(\Omega,{\rm Sym}_{2,2}(\R))$ and
 by Lemma \ref{det} we have
 ${\rm det }D^{2}\zeta=0$ in the whole $\Omega$.
Since $\zeta\equiv\frac{\partial\zeta}{\partial\bf n}\equiv 0$ on  $\partial\Omega$, there exists a disk $\widetilde\Omega$ (bounded and strictly convex!)  such that $\Omega\subset\widetilde\Omega$ and the  trivial extension $\widetilde\zeta$ of $\zeta$ in $\widetilde\Omega$ belongs to $H^{2}_{0}(\widetilde\Omega)$. Therefore $\det D^{2}\widetilde\zeta=0$ on $\widetilde\Omega$ and still by Theorem 5.1 in \cite{RT} we get $\widetilde\zeta\equiv 0$ in $\widetilde\Omega$ hence $\zeta\equiv 0$ in $\Omega$. Then by \eqref{det00} $\E(\v)=\mathbb{O}$, a contradiction since $\|\mathbb E(\v_{n})\|_{L^{2}}=1$.
\end{proof}

\section{Critical points nearby a flat configuration}
\label{section asymptotic}
When existence of global minimizers fails because the energy is unbounded from below, it is natural to investigate the
structure of local minimizers or, more in general of critical points.
Since the nonlinearity in the $\mathbf{FvK}$ functional
relies in the interaction between membrane and bending contributions, we will focus in this section on the asymptotic analysis
of  critical points in the neighborhood  of a flat configuration, i.e. we will study the behavior for
small out-of-plane displacements.
Throughout this section we assume that $h\!>\!0$ is fixed and
\begin{equation}\label{(2.1)}
g_{h}\equiv 0\, 
\end{equation}
that is, we restrict our analysis to the case of in-plane load acting on a plate of prescribed thickness.
Assume ${\bf f}_h\!\in \!L^2(\partial\Omega,\R^2)$ and \eqref{equil} holds true.
For every $({\bf u}, w)\!\in\! H^{1}(\Omega,\R^2)\!\times \!H^{2}(\Omega)$, referring to
\eqref{stretch}\,-\,\eqref{FvK},
we enclose boundary conditions in the functional, by setting
\begin{equation}
\label{FvKi}
\displaystyle\F_h^{i}(\u,w)=\left\{\begin{array}{ll}
\displaystyle\F_h (\u,w) & \ {\rm if }\:\: \u\in H^1(\Omega,\R^2),\:w\,\in\A^{i}\,,\\
\\
+\infty
& \ \hbox{\rm  otherwise \,, }\\
\end{array} \right.
\end{equation}
\begin{equation}
\F_{h,\varepsilon}^{i}(\u,w)=\F_{h}^{i}(\u,\varepsilon w)\,,\qquad \forall\,\varepsilon>0\,. \qquad\qquad\qquad\qquad\ \  \phantom{:}
\end{equation}
By noticing that $\F_{h,0}:=\F_{h,0}^{i}$ actually is independent of $i$, we also set
\begin{equation}\label{E i h eps}
\mathcal E_{h,\eps}^{i}(\u,w)=\eps^{-2}\left(\F_{h,\eps}^{i}(\u,w)-
\min_{H^{1}(\Omega,\R^2)}\F_{h,0}\right),
\end{equation}
\begin{equation}
\label{E}
\mathcal E_{h}^{i}(\u,w)=\!\left\{\!\!\begin{array}{ll} \displaystyle F_{h}^{b}(w)+\frac{h}{2}\int_{\Omega}J'(\E(\u)) 
: Dw\otimes Dw\,d\xx& \text{if}\ (\u,w)\in\{\argmin\mathcal{F}_{h,0}\}\times \A^{i}\\
&\\
\displaystyle+\infty\  &   \text{else in} \ \ H^{1}(\Omega,\R^2)\times H^{2}(\Omega)\end{array}\right.
\end{equation}
where
\begin{equation}\label{J'}
J'(\mathbb{A})= \frac {E}{1+\nu}\mathbb{A}+\frac{E\nu}{1-\nu^2}(\Tr \mathbb{A})\mathbb{I}
\end{equation}
denotes the derivative of $J$.
\\
Functionals $\mathcal E_{h,\eps}^{i}$ and $\F_{h,\varepsilon}^{i}$ are linked via the following result
\begin{proposition}\label{Gammalim}
$\ \displaystyle \mathcal E_{h}^i \ \ =\ \ \mathop{\Gamma\,\lim}_{\varepsilon\to 0_+}   \ \mathcal{E}_{h,\eps}^i \ $.\\
Precisely, the following relations hold true:\par\bigskip\noindent
i) for every $(\u_{\eps},w_{\eps})\rightharpoonup (\u,w)$ in $w-H^{1}\times H^{2}$ we have
\begin{equation}\label{ginf}
\liminf_{\eps\to 0}\mathcal E_{h,\eps}^{i}(\u_{\eps},w_{\eps})\ge \mathcal E_{h}^{i}(\u,w);
\end{equation}
ii) for every $(\u,w)\in H^{1}\times H^{2}$ there exists $(\widetilde\u_{\eps},\widetilde w_{\eps})\rightharpoonup (\u,w)$ in $w-H^{1}\times H^{2}$ such that
\begin{equation}\label{gsup}
\lim_{\eps\to 0}\mathcal E_{h,\eps}^{i}(\widetilde\u_{\eps},\widetilde w_{\eps})= \mathcal E_{h}^{i}(\u,w).
\end{equation}
\end{proposition}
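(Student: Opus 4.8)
The statement is a $\Gamma$-convergence result, so I would split the argument into the two halves spelled out in (i) and (ii): the $\liminf$ (lower bound) inequality and the existence of a recovery sequence. The underlying mechanism is the second-order Taylor expansion of $J$ around a minimizer of the flat problem $\F_{h,0}$, since $\F_{h,0}(\u,w)=hs_0\int_\Omega J(\E(\u))\,d\xx-hs_0\int_{\partial\Omega}{\bf f}_h\cdot\u\,d\H^1$ (the out-of-plane displacement being switched off), whose minimizers form the affine set $\argmin\mathcal F_{h,0}$. The key algebraic fact is that, writing ${\mathbb D}(\u,\eps w)=\E(\u)+\tfrac{\eps^2}{2}Dw\otimes Dw$, one has
\begin{equation}
\label{expansionJ}
J\big(\E(\u)+\tfrac{\eps^2}{2}Dw\otimes Dw\big)=J(\E(\u))+\tfrac{\eps^2}{2}J'(\E(\u)):Dw\otimes Dw+\tfrac{\eps^4}{8}J''(Dw\otimes Dw:Dw\otimes Dw),
\end{equation}
\emph{exactly}, because $J$ is a quadratic form. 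Dividing $\F_{h,\eps}^i-\min\F_{h,0}$ by $\eps^2$ then produces the bending term $F_h^b(w)$, the coupling term $\tfrac h2\int_\Omega J'(\E(\u)):Dw\otimes Dw$, a term of order $\eps^2$ (the quartic term in $Dw$), plus the nonnegative remainder $\eps^{-2}(\F_{h,0}(\u,\cdot)-\min\F_{h,0})\ge 0$ coming from the fact that $\u$ need not minimize the flat energy.

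\textbf{Lower bound (i).} Given $(\u_\eps,w_\eps)\rightharpoonup(\u,w)$ weakly in $H^1\times H^2$ with $\sup_\eps\mathcal E_{h,\eps}^i(\u_\eps,w_\eps)<\infty$ (otherwise nothing to prove), I first note that the $w_\eps$ automatically satisfy the boundary condition defining $\A^i$, and that boundary condition passes to the weak $H^2$ limit, so $w\in\A^i$. From boundedness of $\mathcal E_{h,\eps}^i$ together with $\eps^{-2}(\F_{h,0}(\u_\eps,0)-\min\F_{h,0})\ge 0$ I get that $\F_{h,0}(\u_\eps,0)-\min\F_{h,0}=O(\eps^2)\to 0$, hence in the limit $\u$ is a minimizer of $\F_{h,0}$, i.e. $\u\in\argmin\mathcal F_{h,0}$; thus the right-hand side $\mathcal E_h^i(\u,w)$ is finite and equals the explicit expression. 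Now I use \eqref{expansionJ}: $\mathcal E_{h,\eps}^i(\u_\eps,w_\eps)\ge F_h^b(w_\eps)+\tfrac h2\int_\Omega J'(\E(\u_\eps)):Dw_\eps\otimes Dw_\eps\,d\xx +\tfrac{\eps^2}{8}(\text{quartic}\ge 0)$, dropping the nonnegative flat-remainder. The bending term is weakly $H^2$-lower-semicontinuous by convexity (coercivity \eqref{coerc}). For the coupling term: $\E(\u_\eps)\to\E(\u)$ weakly in $L^2$, while by Rellich $Dw_\eps\to Dw$ strongly in $L^4$ (hence $Dw_\eps\otimes Dw_\eps\to Dw\otimes Dw$ strongly in $L^2$), so the product converges, giving $\liminf\ge F_h^b(w)+\tfrac h2\int_\Omega J'(\E(\u)):Dw\otimes Dw\,d\xx=\mathcal E_h^i(\u,w)$.

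\textbf{Recovery sequence (ii).} Given $(\u,w)\in H^1\times H^2$: if $\u\notin\argmin\mathcal F_{h,0}$ or $w\notin\A^i$ the right-hand side is $+\infty$ and, since $\eps^{-2}(\F_{h,0}(\u,0)-\min\F_{h,0})\to+\infty$ in the first case (a fixed positive quantity divided by $\eps^2$) and $\mathcal E_{h,\eps}^i=+\infty$ in the second, the constant sequence $(\u,w)$ works. Otherwise take $\widetilde\u_\eps\equiv\u$, $\widetilde w_\eps\equiv w$; then by the exact expansion \eqref{expansionJ} the flat remainder vanishes identically and $\mathcal E_{h,\eps}^i(\u,w)=F_h^b(w)+\tfrac h2\int_\Omega J'(\E(\u)):Dw\otimes Dw\,d\xx+\tfrac{\eps^2 hs_0}{8}\int_\Omega J''\big((Dw\otimes Dw)^{\otimes2}\big)\,d\xx\to\mathcal E_h^i(\u,w)$ as $\eps\to0$, the last term being $O(\eps^2)$ because $w\in H^2(\Omega)\subset W^{1,4}(\Omega)$ in dimension two. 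Thus \eqref{gsup} holds with equality.

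\textbf{Main obstacle.} The only genuinely delicate point is the compactness/strong-convergence step in (i): one must be sure that a bound on $\mathcal E_{h,\eps}^i(\u_\eps,w_\eps)$ together with weak convergence really forces $Dw_\eps\to Dw$ strongly in $L^4$ and $\E(\u_\eps)\rightharpoonup\E(\u)$ in $L^2$, which in turn requires extracting from the energy bound a uniform $H^2$-bound on $w_\eps$ and a uniform $L^2$-bound on $\E(\u_\eps)$ — and here one genuinely needs the flat-minimization structure (coercivity of $\F_{h,0}$ modulo rigid displacements, via Korn) exactly as in the existence proofs of Section~\ref{section existence}; identifying the weak limit $\u$ as an element of the affine subspace $\argmin\mathcal F_{h,0}$ is where the hypothesis \eqref{equil} and the analysis already carried out are used.
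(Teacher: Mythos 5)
Your proof is correct and follows essentially the same route as the paper: both use the quadratic/convex structure of $J$ to split $J(\E(\u_\eps)+\tfrac{\eps^2}{2}Dw_\eps\otimes Dw_\eps)$, drop the nonnegative remainder for the $\liminf$, and take the constant recovery sequence for the $\limsup$. In fact you are slightly more careful than the paper: you explicitly argue that a finite $\liminf$ forces $\F_{h,0}(\u_\eps)\to\min\F_{h,0}$, hence $\u\in\argmin\F_{h,0}$ and $w\in\A^i$, so the inequality is not vacuously wrong when $\mathcal E_h^i(\u,w)=+\infty$; the paper leaves this implicit. One small correction to your ``Main obstacle'' paragraph: since the hypothesis of (i) already assumes $(\u_\eps,w_\eps)\rightharpoonup(\u,w)$ weakly in $H^1\times H^2$, the uniform $H^2$- and $L^2$-bounds you describe as needing to be ``extracted'' from the energy bound are in fact given for free; the strong $L^4$ convergence of $Dw_\eps$ is then just the compact embedding $H^2(\Omega)\hookrightarrow W^{1,4}(\Omega)$ in dimension two, and $\E(\u_\eps)\rightharpoonup\E(\u)$ in $L^2$ is immediate from the weak $H^1$ convergence — so this step is routine rather than the crux of the argument. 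The genuinely nontrivial compactness issues you have in mind belong to the recovery of critical points via Palais–Smale sequences later in the section, not to this $\Gamma$-limit.
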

\begin{proof} Let $(\u_{\eps},w_{\eps})\rightharpoonup (\u,w)$ in $w-H^{1}\times H^{2}$: by convexity we have
\begin{equation}\begin{array}{ll}
&\F_{h,\varepsilon}^{i}(\u_\varepsilon,w_\varepsilon)\geq \eps^{2}F_{h}^{b}(w)+h \int_\Omega\! J(\E(\u_\varepsilon))\,dx+\\
&\\
&+\dfrac{h\varepsilon^2}{2}
\int_\Omega\!\,J\,'(\E(\u_\varepsilon)) 
: Dw_\varepsilon\otimes Dw_\varepsilon\,dx-h \int_{\partial\Omega}{\bf f}_h\cdot\u_\varepsilon\,d\H^1\ge\\
&\\
&\ge\eps^{2}F_{h}^{b}(w)+\dfrac{h\varepsilon^2}{2}
\int_\Omega\!\,J\,'(\E(\u_\varepsilon)) 
: Dw_\varepsilon\otimes Dw_\varepsilon\,dx+\min \mathcal F_{h,0}
\end{array}
\end{equation}
and by taking into account  that
$D w_\varepsilon\otimes D w_\varepsilon\rightarrow D w\otimes D w$
strongly in $L^2(\Omega,{\rm Sym}_{2,2}(\R))\,$ and
$\, J'(\E(\u_\varepsilon))\rightharpoonup \,J\,'(\E(\u))$ weakly   in $L^2(\Omega,{\rm Sym}_{2,2}(\R))$, we get
 $$\liminf_{\eps\to 0}\mathcal E_{h,\eps
 }^{i}(\u_{\eps},w_{\eps})\ge \mathcal E_{h}^{i}(\u,w)$$
 and i) is proven. The proof of ii) is achieved by taking  $(\widetilde\u_{\eps},
 \widetilde w_{\eps})\equiv (\u,w)$.
\end{proof}
We recall that  if  ${\mathcal I}:X\rightarrow \R$ is any $C^1$ functional defined on a Banach space $X$ then $\overline x\in X$ is a critical point for ${\mathcal I}$ if $\mathcal I'(\overline x)=0$ where $\mathcal I': X\to X^{*}$ denotes the Gateaux differential of $\mathcal I$.\\
Due to formula  \eqref{EL} below, $\F_{h,\varepsilon}^{i}$ is a $C^1$ functional in the Hilbert space $H^{1}(\Omega,\R^2)\times \A^{i}$: precisely, for every $(\u,w)\in H^{1}(\Omega,\R^2)\times \A^{i}$ the Gateaux differential  of  $\F_{h,\varepsilon}^{i}$ at $(\u,w)$ is given by
$$(\F_{h,\varepsilon}^{i})'(\u,w)[(\zz,\omega)]\ =\
\Big(\,\tau_{1}(\u,w)[\zz]\,,\, \tau_{2}(\u,w)[\omega]\,\Big) \ ,\qquad
\ \forall \zz\in H^{1}(\Omega,\R^2) \,,\forall \omega\in \A^{i}\,,$$
where
\begin{equation}
\label{EL}
\begin{array}{ll}
\displaystyle\tau_{1}(\u,w)[\zz]\, :=\ h\,\int_\Omega\!J'\left({\mathbb E}(\u)+\frac{\eps^{2}}{2}D w\otimes D w\right)\! : 
{\mathbb E}(\zz)\,-\,h\int_{\partial\Omega}\!{\mathbf f}_h\cdot\zz \ ,
\\
&\\
\!\! \tau_{2}(\u,w)[\omega]\, :=\ \displaystyle \eps^{2}\,\frac{h^3}{12}\int_\Omega\!J'(D^{2}w) : 
D^{2}\omega\,+\,\eps^{2}\,h
\int_\Omega\!J'\left({\mathbb E}(\u)+\frac{\eps^{2}}{2}D w\otimes Dw\right) \! : 
Dw\odot D\omega \ .
\\
\end{array}
\end{equation}
$\big(\tau_{1}(\u,w)[\zz] , \tau_{2}(\u,w)[\omega]\big)$ is replaced by the shorter notation $\big(\,\tau_{1}[\zz]\,,\, \tau_{2}[\omega]\,\big)$, whenever the dependance on fixed choice for $(\u,w)$ is understood.
Actually \eqref{EL} provides the explicit information that $(\F_{h,\varepsilon}^{i})'(\u,w)$ depends continuously on $(\u, w)$.\\
Hence the F\"oppl-von Karman plate equations in weak form together with boundary conditions can be written as follows:
\begin{equation}\label{vKeqs}
\left\{
\begin{array}{ll}
  \u,\,w \in H^1(\Omega,\R^2)\times \mathcal{A}^i   \,, \\
  \tau_{1}(\u,w)[\zz]         \ = \ 0    &\quad \forall \zz\in H^{1}(\Omega,\R^2) \,,\\
  \tau_{2}(\u,w)[\omega] \ = \ 0   &\quad \forall \omega\in \A^{i} \,.
\end{array}
\right.
\end{equation}
Clearly  $(\mathcal E_{h,\varepsilon}^{i})'(\u,w)=\eps^{-2}(\F_{h,\varepsilon}^{i})'(\u,w)$
hence  $\F_{h,\varepsilon}^{i}$ and $\mathcal E_{h,\varepsilon}^{i}$ have the same critical points. Moreover
if $\u_*\in \argmin \mathcal{F}_{h,0}$ then $\tau_{2}(\u_{*},0)\equiv 0$ and $(\u_{*},0)$ is a critical point for  $\F_{h,\varepsilon}^{i}$.\\
The next definition tunes the standard notion of
\textit{Palais-Smale sequence} to the present context.
\begin{definition}
\label{ups}
Let  ${\mathcal I}_{\eps}:X\!\rightarrow \!\R$ be a sequence of  $C^1$ functionals and $X$ be a Banach space
$X$.  A sequence $\{x_\eps\}\subset X$ is a \textbf{uniform Palais-Smale sequence} if  there exists $C>0$ such that  $\mathcal I_{\eps}(x_\eps)\le C$ and
$\parallel \mathcal I_{\eps}'(x_\eps)\parallel_{X^*}\rightarrow 0,$ as $\eps\rightarrow 0_{+}$.
\end{definition}
Notice that the above definition reduces to the usual notion of Palais-Smale sequences
when ${\mathcal I}_{\eps}\equiv \mathcal I$ for every $\eps>0$.
Let $\u_*\in \argmin \mathcal{F}_{h,0}$, we denote by
$\mathcal{K}^i_h(\u_*)$ the set of critical points in $\A^i$ of $\mathcal{E}^i_h(\u_*,\cdot)$ that is
\begin{equation}\label{critpointsE}
  \mathcal{K}^i_h(\u_*)=\{w\in\A^i : \tau_2(\u_*,w)[\omega]=0\,, \ \forall \,\omega\in \A^i \}\ .
  \end{equation}
Next result shows that  any critical point of $\mathcal{E}_h(\u_*,\cdot)$ in $\A^i$ can be approximated by  a uniform Palais-Smale sequence of $\mathcal{E}^{i}_{h,\varepsilon}$
 whose energy converges to the energy of the critical point itself.
 \begin{theorem}
\label{ps}
Let $\u_*\in \argmin \mathcal{F}_{h,0}$, $w\in \mathcal{K}^i_h(\u_*)$ and
$\z_w\in\argmin\mathcal{Q}_{w}(\z)$, where
\begin{equation}\mathcal{Q}_{w}(\z)\ := \ \int_\Omega\!\,J\Big(\mathbb E(\z)+\frac{1}{2}D{w}\otimes D {w}\Big)\,d\xx
\end{equation}
Then $\{(\u_*+\varepsilon^2\z_w,w)\}_{\eps>0}$ is a uniform Palais-Smale sequence for $\mathcal{E}^{i}_{h,\varepsilon}$ and
\begin{equation*}\lim_{\eps\to 0_+}\mathcal{E}^{i}_{h,\varepsilon}(\u_*+\varepsilon^2\z_w,w) \ = \ \mathcal{E}^i_h(\u_*,w)\ .
\end{equation*}
\end{theorem}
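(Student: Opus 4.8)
The plan is to verify the two required properties directly from the explicit formulas \eqref{EL} for the Gateaux differential and from the structure of $\mathcal{E}^i_h$ and $\mathcal{E}^i_{h,\varepsilon}$. First I would observe that, since $\u_*\in\argmin\mathcal F_{h,0}$, the first-order condition $\tau_1(\u_*,0)[\zz]=0$ holds for all $\zz\in H^1(\Omega,\R^2)$; equivalently $h\int_\Omega J'(\E(\u_*)):\E(\zz)\,d\xx=h\int_{\partial\Omega}{\bf f}_h\cdot\zz\,d\H^1$. I would also record the minimality condition for $\z_w$: since $\mathcal Q_w$ is convex and quadratic, $\z_w\in\argmin\mathcal Q_w$ is characterized by $\int_\Omega J'\big(\E(\z_w)+\tfrac12 Dw\otimes Dw\big):\E(\zz)\,d\xx=0$ for all $\zz\in H^1(\Omega,\R^2)$.

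Next I would compute $(\mathcal{E}^i_{h,\varepsilon})'(\u_*+\varepsilon^2\z_w,w)=\varepsilon^{-2}(\F^i_{h,\varepsilon})'(\u_*+\varepsilon^2\z_w,w)$ componentwise using \eqref{EL}. For the first component, one evaluates $\tau_1$ at $(\u_*+\varepsilon^2\z_w,w)$: the argument of $J'$ is $\E(\u_*)+\varepsilon^2\big(\E(\z_w)+\tfrac12 Dw\otimes Dw\big)$, so by linearity of $J'$ and the two first-order conditions above, $\tau_1(\u_*+\varepsilon^2\z_w,w)[\zz]=\varepsilon^2\cdot 0=0$ exactly — hence the first component of the differential vanishes identically in $\varepsilon$. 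For the second component, $\tau_2(\u_*+\varepsilon^2\z_w,w)[\omega]=\varepsilon^2\big(\tfrac{h^3}{12}\int_\Omega J'(D^2w):D^2\omega + h\int_\Omega J'(\E(\u_*)+\varepsilon^2(\cdots)):Dw\odot D\omega\big)$. Dividing by $\varepsilon^2$ and using $w\in\mathcal K^i_h(\u_*)$, which says precisely $\tfrac{h^3}{12}\int_\Omega J'(D^2w):D^2\omega + h\int_\Omega J'(\E(\u_*)):Dw\odot D\omega=0$, the leading term cancels and one is left with $h\varepsilon^2\int_\Omega J'\big(\E(\z_w)+\tfrac12 Dw\otimes Dw\big):Dw\odot D\omega$. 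This is $O(\varepsilon^2)$ uniformly over $\|\omega\|_{\A^i}\le 1$, so $\|(\mathcal E^i_{h,\varepsilon})'(\u_*+\varepsilon^2\z_w,w)\|_{X^*}=O(\varepsilon^2)\to 0$. Uniform boundedness of the energies $\mathcal E^i_{h,\varepsilon}(\u_*+\varepsilon^2\z_w,w)$ then follows from the energy-limit computation below, establishing the uniform Palais–Smale property.

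For the energy convergence I would expand $\F^i_{h,\varepsilon}(\u_*+\varepsilon^2\z_w,w)=\F_h(\u_*+\varepsilon^2\z_w,\varepsilon w)$ explicitly. The bending term is $\varepsilon^2 F^b_h(w)$. The membrane term is $h\int_\Omega J\big(\E(\u_*)+\varepsilon^2\E(\z_w)+\tfrac{\varepsilon^2}{2}Dw\otimes Dw\big)\,d\xx$; since $J$ is a quadratic form, expanding gives $h\int_\Omega J(\E(\u_*))\,d\xx + h\varepsilon^2\int_\Omega J'(\E(\u_*)):\big(\E(\z_w)+\tfrac12 Dw\otimes Dw\big)\,d\xx + O(\varepsilon^4)$. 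The load term is $-h\varepsilon^2\int_{\partial\Omega}{\bf f}_h\cdot\z_w\,d\H^1$ plus the $\varepsilon$-independent piece $-h\int_{\partial\Omega}{\bf f}_h\cdot\u_*\,d\H^1$. Subtracting $\min_{H^1}\F_{h,0}=h\int_\Omega J(\E(\u_*))\,d\xx - h\int_{\partial\Omega}{\bf f}_h\cdot\u_*\,d\H^1$ and dividing by $\varepsilon^2$, the $O(1/\varepsilon^2)$ and $O(\varepsilon^2)$ terms disappear and one obtains $\mathcal E^i_{h,\varepsilon}(\u_*+\varepsilon^2\z_w,w)=F^b_h(w)+h\int_\Omega J'(\E(\u_*)):\big(\E(\z_w)+\tfrac12 Dw\otimes Dw\big)\,d\xx - h\int_{\partial\Omega}{\bf f}_h\cdot\z_w\,d\H^1+O(\varepsilon^2)$. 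Finally I would use the first-order condition for $\u_*$ with test field $\z_w$, namely $h\int_\Omega J'(\E(\u_*)):\E(\z_w)\,d\xx=h\int_{\partial\Omega}{\bf f}_h\cdot\z_w\,d\H^1$, to cancel the $\E(\z_w)$ and boundary contributions, leaving $F^b_h(w)+\tfrac h2\int_\Omega J'(\E(\u_*)):Dw\otimes Dw\,d\xx+O(\varepsilon^2)$, which converges to $\mathcal E^i_h(\u_*,w)$ as $\varepsilon\to 0_+$.

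The main obstacle is purely bookkeeping: one must handle the linearity/bilinearity of $J$ and $J'$ carefully so that all the cross terms are correctly identified, and must make sure the $O(\varepsilon^2)$ and $O(\varepsilon^4)$ remainders are genuinely controlled uniformly — which is immediate here since $\z_w$, $w$, $\u_*$ are fixed and $J$ is a fixed quadratic form, so no compactness or regularity input beyond membership in the stated Sobolev spaces is needed. The only conceptual point is recognizing that the choice $\z_w\in\argmin\mathcal Q_w$ is exactly what kills the first component of the differential to all orders, and that the minimality conditions for $\u_*$ and for $w$ are what produce the stated energy limit.
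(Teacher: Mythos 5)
Your proposal is correct and follows essentially the same route as the paper's own proof: first expand the energy using the fact that $J$ is an exact quadratic form, cancel the linear-in-$\varepsilon^2$ terms via the first-order optimality of $\u_*$ tested against $\z_w$, and then compute the two components of the differential, killing $\tau_1$ via the optimality conditions for $\u_*$ and $\z_w$ and reducing $\tau_2$ to an $O(\varepsilon^2)$ remainder via $w\in\mathcal{K}^i_h(\u_*)$. Your observation that $\tau_1(\u_*+\varepsilon^2\z_w,w)[\zz]=0$ identically is a slightly sharper bookkeeping of what the paper does, but the mechanism is the same.
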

\begin{proof}
We have to prove the following conditions
\begin{itemize}
\item[a)] \ \,$\mathcal{E}^{i}_{h,\varepsilon}(\u_*+\varepsilon^2\z_w,w)\ \,\leq \ \,C\ \, <+\infty\,,\quad \forall\varepsilon\in(0,1]$\,,\\
\item[b)] \ $(\mathcal{E}^{i}_{h,\varepsilon})'(\u_*+\varepsilon^2\z_w,w)\rightarrow 0$ strongly in $(H^1(\Omega,\R^2)\times\A^i)^*$\,,\\
\item[c)] \ $\displaystyle\lim_{\eps\to 0_{+}}\mathcal{E}^{i}_{h,\varepsilon}(\u_*+\varepsilon^2\z_w,w)= \mathcal{E}^i_h(\u_*,w).$\\
\end{itemize}
We first prove c), which implies a) too.
Indeed
\begin{equation*}
\begin{array}{ll}
&\hspace{-0.7cm}\mathcal{E}^{i}_{h,\varepsilon}(\u_*+\varepsilon^2\z_w,w)\ =\
\varepsilon^{-2}\left[ \F_h^1(\u_*+\varepsilon^2\z_w,\eps w)-\F_{0,h}(\u_*)\right]\ = \\
& \\
=&\displaystyle\varepsilon^{-2}\left[\frac{h^3}{12}\int_\Omega\! J(\varepsilon D^2 w)\,dx+h\int_\Omega\!J(\mathbb E(\u_{*})+\varepsilon^2\mathbb E(\z_w)+\frac{\varepsilon^2}{2}D w\otimes D w)\,d\xx\right]-\\
&\\
&\displaystyle-\varepsilon^{-2}\left[h\int_\Omega\!J(\mathbb E(\u_{*}))+\varepsilon^2 h\int_{\partial\Omega}\!{\mathbf f}_h\cdot\z_w \right] \ =\\
&\\
=&\displaystyle\varepsilon^{-2}\left[\frac{h^3}{12}\varepsilon^2\int_\Omega\! J(D^2 w)\,d\xx+h\int_\Omega\!J(\mathbb E(\u_{*})+\varepsilon^4h\int_\Omega\!J\left(\mathbb E(\z_w)+\frac{1}{2}D w\otimes D w\right)\,d\xx\right]+\\
&\\
&\displaystyle +\varepsilon^{-2}\left[\varepsilon^2 h\int_\Omega\!J'(\mathbb E(\u_*)):\left(\mathbb E(\z_w)+\frac{1}{2}D w\otimes D w\right)\,d\xx-h\int_\Omega\!J(\mathbb E(\u_*))-\varepsilon^2\int_{\partial\Omega}\!{\mathbf f}_h\cdot\z_w\right] \ =\\
&\\
=&\displaystyle\frac{h^3}{12}\int_\Omega\! J(D^2 w)\,d\xx+\varepsilon^2h\int_\Omega\!J\left(\mathbb E(\z_w)+\frac{1}{2}D w\otimes Dw\right)\,d\xx+\\
&\\
&\displaystyle + h\int_\Omega\!J'(\mathbb E(\u_*)):\left(\mathbb E(\z_w)+\frac{1}{2}D w\otimes D w\right)\,d\xx-h\int_{\partial\Omega}\!{\mathbf f}_h\cdot\z_w \ =\\
&\\
=&\displaystyle\frac{h^3}{12}\int_\Omega\! J(D^2 w)\,d\xx+\varepsilon^2h\int_\Omega\!J\left(\mathbb E(\z_w)+\frac{1}{2}D w\otimes Dw\right)\,d\xx+\\
&\\
&\displaystyle +\frac{h}{2}\int_\Omega\!J'(\mathbb E(\u_*)): D w\otimes D w\,d\xx\\
\end{array}
\end{equation*}
since, due to minimality of $\u_{*}$,
$$\int_\Omega\!J'(\mathbb E(\u_*)):\mathbb E(\z_w)\,d\xx-\int_{\partial\Omega}\!{\mathbf f}_h\cdot\z_w=0.$$
Hence $\lim_{\eps\to 0}\mathcal{E}^{i}_{h,\varepsilon}(\u_*+\varepsilon^2\z_w,w)= \mathcal{E}^i_h(\u_*,w)$ as claimed.\\
Eventually we prove b). By recalling \eqref{E i h eps} and \eqref{EL}, we get for every $\zz\in H^1(\Omega,\R^2)$ and $\omega\in\A^i$
$$(\mathcal{E}^{i}_{h,\varepsilon})'(\u_*+\varepsilon^2\z_w,w)[(\zz,\omega)]
\ =\ \eps^{-2}\,\Big(\tau_1\big(\,\u_{*}+\varepsilon^2\z_w,w\,\big)[\zz]\,,\,
\tau_2\big(\,\u_{*}+\varepsilon^2\z_w,w\,\big)[\omega]\,\Big)\ .$$
%
Since $\z_w\in\argmin\mathcal{Q}(\z)$\,, $\,\u_{*}\in \argmin \F_{h,0}$ and
$w\in \K^i_w(\u_*)$  we get:
$$ \tau_1(\u_{*},0)[\zz]=0\ \ \forall \zz\in H^1(\Omega,\R^2)\,, \qquad\tau_2(\u_{*},w)[\omega]=0\ \
\forall  \omega \in\A^i\,,$$
$$\eps^{-2}\,\tau_2(\u_*+\eps^2\zz_w,w)[\omega]\,=\,\eps^{2}\int_\Omega\!
J'\left({\mathbb E}(\z_w)+\frac{1}{2}Dw\otimes Dw\right): Dw\otimes D\omega\ .$$
The above relationships together with \eqref{critpointsE} imply
$$\sup_{\parallel(\zz,\omega)\parallel\leq 1}\big\vert\,(\mathcal{E}^{i}_{h,\varepsilon})'(\u_*+\varepsilon^2\z_w,w)[(\zz,\omega)]
\,\big\vert\rightarrow 0\:\:\:\hbox{\rm as }\;\;\varepsilon\rightarrow 0,$$
where $\parallel\!(\zz,\omega)\!\parallel\,=\,\parallel\!\zz\!\parallel_{H^1}+
\parallel\!\omega\!\parallel_{H^2}$\,, \,thus proving $b)$\,.
\end{proof}
\begin{remark}{\rm Let $\u_*\in \argmin \mathcal{F}_{h,0}$, $w\in \mathcal{K}^i_h(\u_*)$ then
\begin{equation}
0\,=\,\mathcal{E}^i_h(\u_*,w)'[(\mathbf{0},w)]\,=\,
\frac{h^3}{12}\int_\Omega\!J'(D^{2} w)\cdot D^{2}w\,d\xx+h\int_\Omega\!
J'\left(\mathbb E(\u_{*})\right) : Dw\otimes Dw
\end{equation}
that is $\mathcal{E}^i_h(\u_*,w)=0$ and $\mathcal{E}^{i}_{h,\varepsilon}(\u_*+\varepsilon^2\z_w,w)=\eps^{2}h\min \mathcal{Q}_{w}$.
  }
\end{remark}

\begin{remark}{\rm
In Theorem~\ref{ps}  we have shown that every critical point for ${\mathcal E}^i_h$
of the kind $(\u_*,w)$, with $\u_*\in \argmin \mathcal{F}_{h,0}$ and
$w\in \mathcal{K}^i_h(\u_*)$,
can be approximated (in the strong convergence of
$H^1(\Omega,\R^2)\times H^2(\Omega))$
by uniform Palais-Smale sequences of ${\mathcal E}^i_{h,\varepsilon}$\,. Actually the displacement pair sequence can be chosen explicitly of the kind $(\u_*+\eps^2\zz_w,w)$, say with fixed out-of-plane component and in-plane displacement approximated by an infinitesimal correction tuned by the out-of-plane component.
Nevertheless we cannot expect that every uniform Palais-Smale sequence of ${\mathcal E}^i_{h,\varepsilon}$ is equibounded in
$H^1(\Omega,\R^2)\times \A^{i}$, as we are going to show in the next Counterexample.}
\end{remark}

\begin{counterex}\label{counterex 2.6}(\textbf{a uniform Palais-Smale sequence lacking compactness})\\
{\rm If $\Omega\!=\!(0,a)\!\times\!(0,1)$, $\Gamma\!\equiv\!\partial\Omega$ and
${\mathbf f}_h=\gamma\,{\mathbf e}_2({\mathbf 1}_{(0,a)\times\{0\}})-{\mathbf 1}_{(0,a)\times\{1\}})$,  where $\gamma$ is a suitable constant to be chosen later,
then the unboundedness may develop.
\\So by Theorem~\ref{existence3} (clamped plate), $\forall h>0,\,\forall\varepsilon>0$ there exists
$(\u_\varepsilon, w_\varepsilon)\in\argmin \mathcal{E}^0_{h,\varepsilon}$. Hence $(\u_\varepsilon, w_\varepsilon)$ is a uniform  Palais-Smale sequence for $\mathcal{E}^0_{h,\varepsilon}$, moreover we show below that such a sequence must lack weak compactness in $H^1(\Omega,\R^2)\times H^2(\Omega)$ for big $\gamma$. Indeed, if compactness were true, we would obtain (up to subsequences) that $(\u_\varepsilon, w_\varepsilon)\rightharpoonup(\u,w)\in \argmin{\mathcal E}^0_h$, due to Proposition \ref{Gammalim}.
Eventually we show that $\inf {\mathcal E}^0_h=-\infty$, thus obtaining a contradiction.\\
Actually,
due to Euler equations
\begin{equation}\label{EulerControex}
\int_\Omega J'\big(\E(\uu)\big):\E(\vv) \ =\ \int_{\partial\Omega}\ff \cdot  \v \ =\ -\,\gamma\int_\Omega v_{2,2} \qquad \forall\vv\in H^1(\Omega,\R^2)\,,
\end{equation}
so, for every $\u\in\argmin\mathcal{F}_{h,0}$, $\,J'(\E(\u))=\,-\,\gamma {\bf e}_{2}\otimes {\bf e}_{2}\,$, $\,\uu=2\frac {\gamma\nu}{E}\frac{1+\nu}{1+3\nu}(x_1\mathbf{e}_1+x_2\mathbf{e}_2)+r$, $r\in \mathcal{R}$, and by \eqref{E}
\begin{equation}
\label{EhCounterex}
\mathcal E_{h}^{0}(\u,w)\!=\!\left\{\begin{array}{ll} \!\!\displaystyle\frac{h^3}{12}\!\int_\Omega\!\!J(D^2w)
-\frac{h\gamma}{2}\!\int_{\Omega}\!|w_{,2}|^{2}d\xx,& \!\text{if}\, \u\!\in\!\argmin\mathcal{F}_{h,0}(\cdot,0), w\!\in\!\A^{0}\,,\\
&\\ \!\!
\displaystyle+\infty\  &   \text{otherwise in} \ \ H^{1}(\Omega)\times H^{2}(\Omega)\,.\end{array}\right.
\end{equation}
 Hence, if $\u\in\argmin \F_{h,0}$, $w\in \A^0$, we get
\begin{equation*}
\mathcal E_{h}^{0}(\u,w)
\leq
\frac{C_\nu\,E\,h^3}{24}\int_\Omega\!\vert D^2w\vert^2d\xx
-\frac{h}{2}\gamma\int_\Omega\!|w_{,2}|^{2}\,d\xx.
\end{equation*}
Set $w(x_1,x_2)=\alpha(x_1)\beta(x_2)$, with
 $\alpha\in H^2_0(0,a)$ and $\beta\in H^2_0(0,1)$. Then $w\in H^2_0(\Omega)$ and
\begin{equation}
\label{boundE}
\mathcal E_{h}^{2}(\u,w)\leq \left( A_0C_0+A_1C_1+ A_2\right)C_\nu\,\frac{Eh^3}{24}\int_0^1\vert\beta''\vert^2\,dx_{2}- \frac{A_2h\gamma}{2}\int_0^1\vert\beta'\vert^2dx_{2},
\end{equation}
where
$$A_0= \int_0^1\vert \alpha''\vert^2dx_{1},\:\:\:A^1= 2\int_0^1\vert \alpha'\vert^2dx_{1},\:\:\:A_2= \int_0^1 \alpha^2dx_{1}$$
and $C_0,C_1$ are the best constants such that
$$\int_{0}^1 \beta^2dx_{2}\leq C_0\int_{0}^1\vert \beta''\vert^2dx_{2},\:\:\:\int_{0}^1\vert \beta'\vert^2dx_{2}\leq C_1\int_{0}^1\vert \beta''\vert^2dx_{2}\:\:\:\:\forall \beta\in H^2_0(0,1)$$
If $\xi\in H^2_0(0,1)$ is the eigenfunction fulfilling the equality
$\int_{0}^1\vert \xi'\vert^2dx_{2} = C_1\int_{0}^1\vert \xi''\vert^2dx_{2}$ and
$$\gamma > \frac{1}{6}\left( A_0C_0+A_1C_1+ A_2\right)\,C_\nu\,E\,h^2/( A_2C_1)\,.$$
Setting $\beta_n\!:=\!n\xi\!\in \!H^2_0(0,1)$ and $w\!=\!\alpha\beta_{n}$, the right-hand side of \eqref{boundE} goes to $-\infty$ as $n\!\rightarrow \!\infty.$
}
\vspace{-0.1cm}
\end{counterex}
In the previous counterexample we have shown that some uniform Palais-Smale sequence may be not converging to any critical point, while in the next examples we show how Theorem 2.3 can be used to detect buckled configurations of the plate (associated to critical points for \textbf{FvK}) by means of uniform Palais-Smale
sequences for the approximating functionals.
\begin{example}
\label{example 2.7} (\textbf{buckling of a rectangular plate under compressive load})\\
{\rm Set $\Omega  = (0,a)\times(0,1)$,
${\mathbf f}_h=\gamma\,{\mathbf e}_2({\mathbf 1}_{(0,a)\times\{0\}})-{\mathbf 1}_{(0,a)\times\{1\}})$ and $\Gamma =\Sigma_+\cup \Sigma_-$, with \\ $\Sigma_+=[0,1]\times\{1\}$, $\Sigma_-=[0,1]\times\{0\}$.\\
Now $\displaystyle\Gamma\neq\partial\Omega$\,: by arguing as the in previous Counterexample we find
noncompact uniform Palais-Smale sequences together with energy of admissible configurations unbounded from below.
\\
In the present case we push forward the analysis: as before we find that if $\u\in \argmin \F_{h,0}$ and $w\in \A^i$, i=0,1,2, then $J'(\mathbb E(\u))=-\gamma {\mathbf e}_2\otimes{\mathbf e}_2$, so that
$${\mathcal E}^i_h(\u, w)=\frac{h^3}{12}\int_\Omega\!J(D^2w)d\xx-\frac{h\gamma}{2}\int_\Omega\!|w_{,2}|^2d\xx
\qquad
\text{if}\ \u\in\argmin\mathcal{F}^i_{h,0}(\cdot,0),\ w\in\A^{i}\,.$$
	We look for critical points in the form  $w=w(x_2)$ under the following conditions:
	\begin{itemize}
    \item[]$w(0)=w(1)=w'(0)=w'(1)=0$, \quad\qquad\ \ if $i=0$\,;
	\item[]$w(0)=w(1)=0$, \qquad\qquad\qquad\qquad\qquad\quad if $i=1$\,;
	\item[]$w(0)''=w''(1)=w'''(0)=w'''(1)=0$, \qquad if $i=2$\,.
	\end{itemize}
Since $J({\mathbf e}_2\otimes{\mathbf e}_2)=\frac{E}{2(1-\nu^2)}$, we have
$${\mathcal E}^i_h(\u, w)=\frac {Eh^3}{24(1-\nu^2)}
\int_0^1\vert w''(x_2)\vert^2\,dx_{2}-\frac{h\gamma\,a}{2}\int_0^1\vert w'(x_2)\vert^{2}\,dx_{2}$$
whose non-trivial  critical points can be easily computed, via the ODE
$$w''''+\frac{12\gamma a (1-\nu^2)}{Eh^2}w''=0.$$
Theorem~\ref{ps} allows to recover Palais-Smale sequences for $\mathcal E^i_{h,\varepsilon}\,, \ i=0,1,2$.\\
In the clamped case ($i=0$) the nontrivial buckled solutions occur for discrete choices of $h$\,:
$$
h_n\, =\, \frac 1 {2\,n\,\pi}\,\sqrt{\frac {12\,\gamma\,a\,(1-\nu^2)}{E}}\,,\quad
w_n(x_2)\ =\ 1\,+\, \sin \left(\,\sqrt{\frac {12\,\gamma\,a\,(1-\nu^2)}{E}}\,
\frac 1{\,h}\, (x_2-\pi/2)\right), \quad n\in\N;
$$
else, for any other choice of $h$, $w\equiv 0$\,.\\
The associated Palais-Smale sequence is $\big(\,2\frac {\gamma\nu}{E}\frac{1+\nu}{1+3\nu}(x_1\mathbf{e}_1\!+\!x_2\mathbf{e}_2)+\varepsilon^2 \zz_{w_n}(x_1,x_2)\,,\,w_n(x_2)\big)$\,, where
$\zz_{w_n}(x_1,x_2)=\Big(\,0\,,\,1/2\,\int_0^{x_2}|w_n'(t)|^2dt\,\Big)$ and $w_n$ is given above.}
\end{example}
\begin{example}
\label{example 2.8}(\textbf{buckling of a rectangular plate under shear load}).\\
{\rm Set $ \Omega=(-2,2)\times(-1,1)$, $i=0\,,$ and $\Gamma=\Sigma^{1,\pm}\cup \Sigma^{2,\pm}$, where:\\
$\Sigma^{1,+}=[-2,0]\times\{1\}$, $\Sigma^{1,-}=[0,2]\times\{-1\}$,
$\Sigma^{2,+}=\{2\}\times[-1,1]$, $\Sigma^{2,-}=\{-2\}\times[-1,1]$.\\
Assume $\mathbf{f}_h=\gamma \ttau\big(\mathbf{1}_{S^{2,\pm}}-\mathbf{1}_{S^{1,\pm}}\big)$, where
$S^{2,\pm}=\Sigma^{2,\pm}$, $S^{1,\pm}=[-2,2]\times\{\pm 1\}$, $\gamma>0$, $\ttau$ is the counterclockwise oriented  tangent unit vector to $\partial\Omega=S^{1,\pm}\cup S^{2,\pm}$.
\\
Since $\u\in \argmin \F_{h,0}$, by exploiting Euler-Lagrange equations as before, we obtain\\
$J'(\mathbb E(\u))=\gamma ({\mathbf e}_1\otimes{\mathbf e}_2+ {\mathbf e}_2\otimes{\mathbf e}_1)$
and by \eqref{E}
$${\mathcal E}^0_h(\u, w)=\frac{ h^3}{12}\int_\Omega\!J(D^2w)dx+h\gamma\int_\Omega\!w_{,1}\,w_{,2}\,d\xx.$$
We look for critical points in the form
\begin{equation}
w=\left\{
\begin{array}{ll}
\psi(x_1-x_2)
 \quad &\hbox{ if }\:\: (x_1,x_2)\in \Omega,\:\:\:\vert x_1-x_2\vert\leq 1\\
\\
0
& \quad \hbox{ else in } \Omega,
\end{array} \right.
\end{equation}
and satisfying $\psi(\pm 1)=\psi'(\pm 1)=0$.\\
By
$\,J({\mathbf e}_1\otimes{\mathbf e}_1+
{\mathbf e}_2\otimes{\mathbf e}_2-{\mathbf e}_1\otimes{\mathbf e}_2-{\mathbf e}_2\otimes{\mathbf e}_1)
=\displaystyle \frac{2E}{1-\nu^2}\,$ we obtain
$${\mathcal E}^0_h(\u, w)=\frac{h^3E}{3(1-\nu^2)}
\int_{-1}^1\vert \psi''(t)\vert^2\,dt-2h\gamma\,\int_{-1}^1\vert \psi'(t)\vert^2\,dt$$
whose nontrivial critical points can be easily computed, via the ODE
$$\psi''''+\frac{6\gamma(1-\nu^2)}{Eh^2}\,\psi''=0\,, \qquad
\psi(\pm 1)=\psi'(\pm1)=0\,.$$
Therefore even now the nontrivial buckled solutions occur for (different) discrete choices of $h$\,:
$$
w\,=\,w_n(x_1,x_2)\ =\ \psi_n (x_1-x_2)\ :=\ 1\,+\, \sin \left(\,\sqrt{\frac {12\,\gamma\,a\,(1-\nu^2)}{E}}\,
\frac 1{\,h_n}\, (x_1-x_2+1/2)\right)
$$
$$ \hbox{if }\qquad
h_n\, =\, \frac 1 {n\,\pi}\,\sqrt{\frac {12\,\gamma\,a\,(1-\nu^2)}{E}}\,,\qquad
\quad \hbox{with}\quad n\in\N;
$$
else, we have the flat solution \,$w\equiv 0$\,  for any other choice of $h$ \,.\\
The associated Palais-Smale sequence is \,$\big(\,\uu(x_1,x_2)+\varepsilon^2 \zz_{w_n}(x_1,x_2)\,,\,w_n(x_1,x_2)\,\big)$\,, where\\
\centerline{$ \uu(x_1,x_2)\!=\! \gamma\frac{1+\nu}{E}(x_2,x_1)\,,\
\zz_{w_n}(x_1,x_2)\!=\! \!\Big(\!-(1/2)\! \int_{-1}^{x_{1}-x_{2}} \!|w_{n}'(t)|^{2}\, dt\,, \ (1/2)\!\int_{-1}^{x_{1}-x_{2}} |w_{n}'(t)|^{2}\,dt\,\Big)\, .
$}
}
\end{example}
\begin{remark}\label{rmk2.9}
{\rm In Examples  \ref{example 2.7}, \ref{example 2.8}, when nontrivial solutions exist the period of the oscillations has order $h$. By scaling loads, that is by
taking ${\bf f}_{h}\!=\!h^\alpha \mathbf f$, we get
$J'(\mathbb E(\u))=-h^\alpha\gamma( {\mathbf e}_2\otimes{\mathbf e}_2)$ and
$J'(\mathbb E(\u))=h^\alpha\gamma ({\mathbf e}_1\otimes{\mathbf e}_2+ {\mathbf e}_2\otimes{\mathbf e}_1)$ respectively, while related limit functionals become respectively
$${\mathcal E}^i_h(\u, w)=\frac {Eh^3}{24(1-\nu^2)}
\int_0^1\vert w''(x_2)\vert^2\,dx_{2}-\frac{h^{\alpha+1}\gamma\,a}{2}\int_0^1\vert w'(x_2)\vert\,dx_{2}\,,\qquad i=0,1,2,$$
$${\mathcal E}^0_h(\u, w)=\frac{h^3E}{3(1-\nu^2)}
\int_{-1}^1\vert w''(t)\vert^2\,dt-2h^{\alpha+1}\gamma\,\int_{-1}^1\vert w'(t)\vert^2\,dt,$$
whose nontrivial critical points obviously exhibit  oscillation period of order $h^{1-\alpha/2}$.}
\end{remark}
Computations in Remark \ref{rmk2.9} proves useful in the next Section when studying
asymptotics of the problem as the thickness tends to $0_+$\,.





\section{Scaling F\"oppl-von K\'arm\'an energy
}
\label{section hscaling}
Here we focus on the asymptotic analysis of the mechanical problems for $\mathbf{Fvk}$ plate as $h\to0_+$. To highlight properties of the limit solution we examine the behavior of suitably scaled energy:
all along this Section we assume that there is no transverse load, say $g_{h}\equiv 0$,
while we refer to a parameter $\alpha$ characterizing different asymptotic regimes of in-plane load $\mathbf{f}_h$, say
\begin{equation}\label{fh=halfaf}
  {\bf f}_{h}=h^{\alpha}{\bf f} \ \hbox{ where } \ \alpha\ge 0 \ \hbox{  and } \ {\bf f}\in L^{2}(\partial\Omega, \mathbb R^{2})\,.
\end{equation}
The next result and the subsequent counterexample show
how the choice of $\alpha$ may influence the asymptotic behavior of functionals $\mathcal F_{h}$ when $h\to 0_{+}$.
\begin{theorem}\label{scaling1} Let $\Omega\subset \R^2$ be a bounded connected Lipschitz open set, $\alpha\ge 2$ and $ i=0,1,2$.\\
If $i=0$ (clamped plate) assume \eqref{equil}  and $\Gamma=\partial\Omega$ (as in Theorem \ref{existence3}) .\\
If $i=1$ (simply supported plate) assume \eqref{equil}, $\Omega$\,strictly convex,
$\Gamma\!=\!\partial\Omega$ (as in Theorem \ref{existence1}).\\
If $i=2$ (free plate) assume \eqref{nullres} and \eqref{tractionload}  (as in Theorem\,\ref{thm global traction}).\\
Set
\\
\begin{equation}\label{gammascaling}
{\mathcal F}^{i,\alpha}(\vv,\zeta)=\left\{\begin{array}{ll}& {\mathcal F}^{i}_{1}(\vv,\zeta)\ \ \hbox{if} \ \ \alpha=2\\
&\\
& {\mathcal F}^{i}_{1}(\vv,\zeta)+\chi_{\{D^{2}\zeta\equiv 0\}}(\zeta)\ \ \hbox{if} \ \ \alpha>2\,,\\
\end{array}\right.
\end{equation}
where $\chi_{\{D^{2}\zeta\equiv 0\}}(\zeta)=0$ if $D^{2}\zeta\equiv 0$, $=+\infty$ else.\\
Fix $i\in\{0,1,2\}$ and a sequence $(\u_{h},w_{h})$ in $ \argmin\F_{h}^{i}$.\\
Then there exists
$(\v,\zeta)\in \argmin {\mathcal F}^{i,\alpha}$ such that, up to subsequences,
\begin{equation}
(h^{-\alpha}\u_{h},h^{-\alpha/2}w_{h})\to (\v,\zeta)
\quad \hbox{ weakly in }
H^{1}(\Omega,\mathbb R^{2})\times H^2(\Omega)\,,\hbox{ as }h\to0_+\,.
\end{equation}
Moreover
\begin{equation}
h^{-2\alpha-1}\mathcal F_{h}^{i}(\u_{h},w_{h})\to {\mathcal F}^{i,\alpha}(\vv,\zeta)
\,,\hbox{ as }h\to0_+\,.\end{equation}
\end{theorem}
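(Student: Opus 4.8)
The plan is to prove the theorem via the classical $\Gamma$-convergence scheme combined with an equicoercivity estimate, exactly in the spirit of the existence proofs already carried out in Section~\ref{section existence}. Write $(\v_h,\zeta_h):=(h^{-\alpha}\u_h,h^{-\alpha/2}w_h)$ and observe that, by the structure of $\mathbb D$ in \eqref{stretch}, one has $\mathbb D(\v_h,\zeta_h)=h^{-\alpha}\,\mathbb D(\u_h,h^{\alpha/2-\alpha+\alpha/2}\cdots)$; more precisely, since $D\zeta_h\otimes D\zeta_h=h^{-\alpha}Dw_h\otimes Dw_h$ and $\E(\v_h)=h^{-\alpha}\E(\u_h)$, we get $\mathbb D(\v_h,\zeta_h)=h^{-\alpha}\mathbb D(\u_h,w_h)$. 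Plugging $\mathbf f_h=h^\alpha\mathbf f$ into \eqref{FvK} and using homogeneity of $J$ of degree $2$, a direct computation yields
\begin{equation*}
h^{-2\alpha-1}\F_h^i(\u_h,w_h)=h\!\int_\Omega\! J(\mathbb D(\v_h,\zeta_h))\,d\xx+\frac{h^{3-\alpha}}{12}\!\int_\Omega\! J(D^2\zeta_h)\,d\xx-h\!\int_{\partial\Omega}\!\mathbf f\cdot\v_h\,d\H^1=:\G_h^i(\v_h,\zeta_h).
\end{equation*}
(The transverse load term is absent because $g_h\equiv0$ is not assumed here, but for $i=2$ condition \eqref{nullres} is in force and for $i=0,1$ it is dispensable as noted after Theorem~\ref{thm global traction0}; in the stated hypotheses $g_h$ is simply taken to be zero.) Thus the scaled minimization problem for $\F_h^i$ is equivalent to minimizing $\G_h^i$, and one recognizes $\G_h^i=h\cdot[\,\text{the }\F_1^i\text{-type energy with bending coefficient }h^{3-\alpha}/12\,]$, up to the factor $h$ that drops out under minimization. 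So it suffices to study $h^{-1}\G_h^i$.

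The first step is equicoercivity: the scaled minimizers $(\v_h,\zeta_h)$ are weakly precompact in $H^1(\Omega,\R^2)\times H^2(\Omega)$ (after the usual projection-out of rigid displacements for $i=2$, and modulo additive constants where appropriate). Since $h^{-1}\G_h^i(\v_h,\zeta_h)\le h^{-1}\G_h^i(\mathbf 0,0)\le 0$, one has the a priori bound
\begin{equation*}
c_\nu\frac{h^{2-\alpha}E}{24}\!\int_\Omega\!|D^2\zeta_h|^2+c_\nu\frac{E}{2}\!\int_\Omega\!|\mathbb D(\v_h,\zeta_h)|^2\le \int_{\partial\Omega}\!\mathbf f\cdot\v_h\,d\H^1,
\end{equation*}
and the right-hand side is controlled exactly by the arguments of Theorems~\ref{thm global traction}, \ref{thm global traction0}, \ref{existence1}, \ref{existence3} respectively: in case $i=2$ by the sign condition \eqref{tractionload} on $\int_\Omega\dv\v_h$, and in cases $i=0,1$ by \eqref{equil} together with Korn's inequality applied to $\v_h-\mathcal R(\v_h)$, after which the contradiction argument on $\lambda_h:=\|\E(\v_h)\|_{L^2}$ blows up via Lemma~\ref{det} and the Rauch--Taylor uniqueness theorem (as in Theorem~\ref{existence1}/\ref{existence3}) or via the Monge--Ampère/Poincaré coupling. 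Note that for $\alpha\ge2$ the bending coefficient $h^{2-\alpha}$ stays bounded below only when $\alpha=2$; when $\alpha>2$ it vanishes, which is precisely why the $\Gamma$-limit gains the constraint $\chi_{\{D^2\zeta\equiv0\}}$ — so a little care is needed, but the $L^2$-bound on $\mathbb D(\v_h,\zeta_h)$ still forces $\E(\v_h)$ bounded and hence (via the same kernel-triviality mechanism) boundedness of $\zeta_h$ in $H^2$. This equicoercivity step is the main obstacle, because it is where all four boundary-condition cases must be unified and where the degeneracy $\alpha>2$ has to be handled.

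The second step is the $\Gamma$-convergence $h^{-1}\G_h^i\xrightarrow{\Gamma}\mathcal F^{i,\alpha}$ in the weak $H^1\times H^2$ topology. For the $\liminf$ inequality: given $(\v_h,\zeta_h)\rightharpoonup(\v,\zeta)$, lower semicontinuity of the convex membrane term $\int_\Omega J(\mathbb D(\cdot))$ under the compact convergence $D\zeta_h\to D\zeta$ in $L^4$ (Rellich) plus weak $L^2$ convergence of $\E(\v_h)$ gives the limit of the membrane part; the bending term is weakly l.s.c. when $\alpha=2$, and when $\alpha>2$, if $\liminf<\infty$ the vanishing coefficient $h^{2-\alpha}\to\infty$ forces $\int_\Omega|D^2\zeta_h|^2\to0$, hence $D^2\zeta\equiv0$, producing the indicator term. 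For the $\limsup$/recovery sequence: take $(\widetilde\v_h,\widetilde\zeta_h)\equiv(\v,\zeta)$ constantly when $\alpha=2$ (the coefficient is already $1/12$), and when $\alpha>2$ the same constant sequence works on the domain of $\mathcal F^{i,\alpha}$ since there $D^2\zeta\equiv0$ kills the bending term in the limit. Finally, equicoercivity + $\Gamma$-convergence give the fundamental theorem of $\Gamma$-convergence: any weak cluster point $(\v,\zeta)$ of $(\v_h,\zeta_h)$ minimizes $\mathcal F^{i,\alpha}$, and $\min h^{-1}\G_h^i\to\min\mathcal F^{i,\alpha}$; multiplying back by the harmless factor $h$ and recalling $\G_h^i(\v_h,\zeta_h)=h^{-2\alpha-1}\F_h^i(\u_h,w_h)$ yields the two displayed convergences in the statement. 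One should also remark that $\argmin\mathcal F^{i,\alpha}$ is nonempty: this follows either from the existence theorems of Section~\ref{section existence} applied with $h=1$ (for $\alpha=2$), or directly from the fundamental theorem of $\Gamma$-convergence (for $\alpha>2$, after checking coercivity of $\mathcal F^{i,\alpha}$ on its domain, which is the same computation with the kernel constraint already imposed).
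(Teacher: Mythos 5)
Your overall scheme --- equicoercivity plus $\Gamma$-convergence --- is the same as the paper's, but the equicoercivity step (which you rightly identify as "the main obstacle") is mishandled, chiefly because of a sign confusion in the rescaled bending coefficient. After substituting $(\v_h,\zeta_h)=(h^{-\alpha}\u_h,h^{-\alpha/2}w_h)$ and $\mathbf f_h=h^\alpha\mathbf f$ with $g_h\equiv 0$, the correctly normalized energy is
$h^{-2\alpha-1}\F_h^i(\u_h,w_h)=\int_\Omega J(\mathbb D(\v_h,\zeta_h))\,d\xx+\frac{h^{2-\alpha}}{12}\int_\Omega J(D^2\zeta_h)\,d\xx-\int_{\partial\Omega}\mathbf f\cdot\v_h\,d\H^1$
(your displayed formula carries a spurious extra factor $h$). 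For $\alpha>2$ the coefficient $h^{2-\alpha}=h^{-(\alpha-2)}$ \emph{diverges to $+\infty$} as $h\to 0_+$; it does not "vanish" as you write --- and indeed you contradict yourself two sentences later with "the vanishing coefficient $h^{2-\alpha}\to\infty$". That divergence is the whole point: it is why the $\Gamma$-limit acquires the constraint $D^2\zeta\equiv 0$, and it makes the compactness argument for $\alpha>2$ \emph{simpler} than the fixed-$h$ proofs of Theorems \ref{thm global traction}, \ref{existence1}, \ref{existence3}, not equal to them.

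Concretely, the paper dispatches $\alpha=2$ trivially (since $(\u_h,w_h)\in\argmin\F_h^i$ iff $(h^{-2}\u_h,h^{-1}w_h)\in\argmin\F_1^i$), and for $\alpha>2$ argues by contradiction assuming $\lambda_h:=\|\E(\v_h)\|_{L^2}\to\infty$: the normalized minimality bound
\begin{equation*}
c_\nu\frac{h^{2-\alpha}E}{24}\int_\Omega|D^2\varphi_h|^2+\lambda_h\,c_\nu\,\frac{E}{2}\int_\Omega|\mathbb D(\mathbf z_h,\varphi_h)|^2\ \le\ C
\end{equation*}
together with $h^{2-\alpha}\to\infty$ forces $\int_\Omega|D^2\varphi_h|^2\to 0$ (not mere equiboundedness), so for $i=0,1$ the boundary condition $\varphi_h=0$ on $\partial\Omega$ gives $\varphi_h\to 0$ in $H^2$, hence $\E(\mathbf z_h)\to\mathbb O$, a contradiction. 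No determinant constraint, no Lemma \ref{det}, no Rauch--Taylor is used or needed; your proposal to "blow up via Lemma \ref{det} and the Rauch--Taylor uniqueness theorem" imports the fixed-$h$ mechanism into a regime where it is superfluous. Likewise your sentence "the $L^2$-bound on $\mathbb D(\v_h,\zeta_h)$ still forces $\E(\v_h)$ bounded and hence boundedness of $\zeta_h$ in $H^2$" has the causality reversed: a bound on $\mathbb D$ controls $\E(\v)$ only after $D\zeta\otimes D\zeta$ is controlled in $L^2$, and that $L^4$ control on $D\zeta$ is a consequence of the $H^2$ bound on $\zeta$, which in turn comes from the (diverging, not vanishing) bending coefficient. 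Finally, the free case $i=2$ requires more than the single phrase "by the sign condition": $D^2\varphi_h\to 0$ only gives $D\varphi_h\to\mathbf c$ constant, and one must use $f\ge 0$ to deduce $0\le\lim f\int_\Omega\dv\mathbf z_h=-\tfrac{f}{2}|\Omega||\mathbf c|^2$, hence $\mathbf c=\mathbf 0$ and then $\E(\mathbf z_h)\to\mathbb O$, again a contradiction. Your $\Gamma$-convergence step is essentially sound once the sign of the coefficient is set straight, but as written the equicoercivity argument would not go through.
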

\begin{proof}
The case $\alpha=2$ is trivial since $(\u_{h},w_{h})\in \argmin {\mathcal F}^{i}_{h}$ if and only if $(h^{-2}\u_{h},h^{-1}w_{h})\in\argmin {\mathcal F}^{i}_{1}$
for every $ h $.\\
If $\alpha> 2$, $i=0,1$ and $(\u_{h},w_{h})\in \argmin {\mathcal F}^{i}_{h}$, set $\v_{h}:=h^{-\alpha}\u_{h},\ \ \zeta_{h}:=h^{-\alpha/2}w_{h},\ \  \lambda_{h}=\|\mathbb E(\v_{h}\|_{L^{2}}$ and assume by contradiction $\lambda_{h}\to +\infty$. Then by taking into account minimality of $(\u_{h},w_{h})$, \eqref{coerc}, \eqref{equil} and setting $\varphi_{h}=\lambda_{h}^{-1/2}\zeta_{h},\ {\bf z}_{h}=\lambda_{h}^{-1}\v_{h}$ we get
\begin{equation}\label{scalminseq1}
\displaystyle c_\nu\frac{h^{2-\alpha} \,E}{24}\int_\Omega\vert D^2 \varphi_{h}\vert^2+
\lambda_{h}\,c_\nu\,\frac{E }{2}\int_\Omega\,|{\mathbb{D}}({\bf z}_{h},\varphi_{h})|^{2}
\ \leq \ \int_{\partial\Omega}{\bf f}\cdot {\bf z}_{h} \le C.
\end{equation}
Hence $|D^{2}\varphi_{h}|\to 0$ in $L^2(\Omega,{\rm Sym}_{2,2}(\R))$ and by  taking into account that $\varphi_{h}=0$ on $\partial\Omega$ we get $\varphi_{h}\to 0$ in $H^{2}(\Omega)$; therefore $\mathbb E({\bf z}_{h})\to \mathbb{O}$ in $L^{2}(\Omega,\mathbb R^{2})$, a contradiction since $\|\mathbb E({\bf z}_{h})\|_{L^{2}}=1$. Then $\lambda_{h}$ is bounded from above and by taking into account minimality of $(\u_{h},w_{h})$ , \eqref{coerc}, \eqref{equil} we get
\begin{equation}
\displaystyle c_\nu\frac{h^{2-\alpha} \,E}{24}\int_\Omega\vert D^2 \zeta_{h}\vert^2+
c_\nu\,\frac{E }{2}\int_\Omega\,|{\mathbb{D}}(\vv_{h},\zeta_{h})|^{2}
\ \leq \ \int_{\partial\Omega}{\bf f}\cdot {\bf v}_{h} \le \|{\bf f}\|\lambda_{h}\le C
\end{equation}
which entails $D^{2}\zeta_{h}\to 0$ in $L^{2}(\Omega)$ and equiboundedness of $D\zeta_{h}$ in $L^{4}(\Omega, \mathbb R^{2})$. \\
When $i=2$ we take again  $\lambda_{h}=\|\mathbb E(\v_{h})\|_{L^{2}}$ and assume  by contradiction $\lambda_{h}\to +\infty$. Then estimate \eqref{scalminseq1} continues to hold and as before $|D^{2}\varphi_{h}|\to 0$ in $L^{2}(\Omega)$ which entails $\varphi_{h}-\,-\! \!\!\!\!\!\int_{\Omega}\varphi_{h}\to 0$ in $L^{2}(\Omega)$,
$D\varphi_h\to \mathbf{c}$ in $L^4$
and $2\mathbb E({\bf z}_{h})\to -{\bf c}\otimes {\bf c}$ strongly in $L^2(\Omega,{\rm Sym}_{2,2}(\R))$  for a suitable ${\bf c}\in \mathbb R^{2}$. Therefore  \eqref{tractionload}, \eqref{scalminseq1} yield
\begin{equation}
0\le \displaystyle \lim_{h\to 0_{+}} \int_{\partial\Omega}{\bf f}\cdot {\bf z}_{h}=\lim_{h\to 0_{+}} f\int_{\partial\Omega}{\bf n}\cdot {\bf z}_{h}=\lim_{h\to 0_{+}} f\int_{\Omega}\hbox{\rm div}\,{\bf z}_{h}=-\frac{f}{2}|\Omega||{\bf c}|^{2}
\end{equation}
that is ${\bf c}={\bf 0}$ so $\mathbb E({\bf z}_{h})\to \mathbb{O}$ in $L^2(\Omega,{\rm Sym}_{2,2}(\R))$ as in the previous cases, again a contradiction. Thus equiboundedness holds in this case too.
Since, for $0<h\leq 1$, the w.l.s.c. functionals $\mathcal{F}^{i,\alpha}$ fulfil $\mathcal{F}^{i,\alpha}\leq
h^{-2\alpha-1}\mathcal{F}^{i}_h$,
the proof can be completed by a standard argument in $\Gamma$ convergence.
\end{proof}
\begin{remark}
{\rm It is worth noticing that when $D^{2}w\equiv 0$ then
\begin{equation}
\F_{1}(\v,w)=\F_{1}(\v,{0})\,, \qquad \ \hbox{ if }i=0,1\,,
\end{equation}
\begin{equation}
\F_{1}(\v,w)=\F_{1}(\v,\boldsymbol\xi\cdot\xx)=\int_{\Omega}J(\E(\v)+\frac{1}{2}{\boldsymbol \xi}\otimes{\boldsymbol \xi})-\!\int_{\partial\Omega}\!{\bf f}_h\cdot\v\:\:\:
\hbox{ for}\:\:\:w=\boldsymbol\xi\cdot\xx,\
\hbox{ if }i=2.
\end{equation}
}
\end{remark}
Theorem \ref{scaling1} is optimal in the sense that if $\alpha<2$ we cannot expect neither that $h^{-2\alpha-1}\min_{\mathcal A^{i}}\mathcal F_{h}$ are bounded from below nor that minimizers are equibounded in $H^{1}(\Omega, \mathbb R^{2})\times H^{1,4}(\Omega)$ when we let $h\to 0_+$. This phenomenon may take place even if $\Omega$ is a rectangle as shown by the  next Counterexample,
where we consider a plate with the same geometry and load of Counterexample
\ref{counterex 2.6} 
, nevertheless here we push further the analysis of this case.\\

\begin{counterex}
\label{infmenoinfinito}
{\rm
Let $a> EC_{\nu},\ \alpha\in [0,2),\ \mathbf f_{h}=h^{\alpha}\mathbf f$ with
 \begin{equation}\label{OMEGAcontroes}
    \Omega \, =\, (0,a)\times(0,1)\,,
    \quad\Gamma\,=\,\partial\Omega\,,
    \quad
    \ g_{h}\,\equiv 0\,,\quad
     \mathbf{f}=\big(\mathbf{1}_{\{y=0\}}-\mathbf{1}_{\{y=1\}})\, \mathbf{e}_2\,.      \end{equation}
Then for any sequence $(\u_{h},w_{h})\in\arg\min\F_{h}^{0}$
(such sequences do exist due to Theorem \ref{existence3}), the scaled sequence $(h^{-\alpha}\u_{h},h^{-\alpha/2}w_{h})$
is not  equibounded in $H^{1}(\Omega,\mathbb R^{2})\times H^{1,4}(\Omega)$\,.
Moreover, $\,\inf\,h^{-2\alpha-1}\!\F_{h}^0\to -\infty$ as $h\to 0_+$.

\vskip0.1cm
Indeed  we can set: $\ \v_{h}:=h^{-\alpha}\u_{h}\,,\ \ \zeta_{h}:=h^{-\alpha/2}w_{h}\,,\ $ and
 \begin{equation}\label{fscaled}
\mathcal W_{h}(\v_{h},\zeta_{h})\,:=\,
h^{-1-2\alpha}\F_{h}(\u_{h},w_{h})
\,=\,\frac{h^{2-\alpha}}{12}\int_{\Omega}J(D^{2}\zeta_{h})+\int_{\Omega}J({\mathbb D}(\v_{h},\zeta_{h}))-\int_{\partial\Omega}{\bf f}\cdot\v_{h} \,,
\end{equation}
\begin{equation}
  {\mathcal{I}}^+(\v,\zeta)\ :=\ \inf\, \left\{\, \limsup_{h\,\to\,0_+}\ \mathcal W_{h}(\v_{h},\zeta_{h})\,:\
  \vv_h \mathop{\rightharpoonup}^{w-H^1}\v \,, \ \zeta_h \mathop{\rightharpoonup}^{w-H^{1,4}}\zeta \,\right\}\ ,
\end{equation}
\begin{equation}
  {\mathcal{I}}^-(\v,\zeta)\ :=\ \inf\, \left\{\, \liminf_{h\,\to\,0_+}\  \mathcal W_{h}(\v_{h},\zeta_{h})\,:\
  \vv_h \mathop{\rightharpoonup}^{w-H^1}\v \,, \ \zeta_h \mathop{\rightharpoonup}^{w-H^{1,4}}\zeta \,\right\},
\end{equation}
\begin{equation} \mathcal J(\mathbb{B},\boldsymbol{\eta})=\dfrac{E}{8(1+\nu)}\left|\mathbb{B}+\mathbb{B}^T+ \boldsymbol{\eta} \otimes \boldsymbol{\eta}\right|^2\,+\,
\dfrac{E\nu}{8(1-\nu^{2})}\left| \Tr \left(\mathbb{B}+\mathbb{B}^T+ \boldsymbol{\eta} \otimes \boldsymbol{\eta}\right)\right|^2 \ .\end{equation}
Then by arguing as  in Lemma 4.1 of \cite{BD}   we get
\begin{equation}\label{I+leq}
  \mathcal I^{+}(\v,\zeta)\ \le\ \Lambda(\v,\zeta)\ :=\ \int_\Omega\,{\mathcal J}({\mathbb{D}}(\v,\zeta))\,dx-\!\!\int_{\partial\Omega}
\!\!{\bf f}\cdot\v\,d\H^1.
\end{equation}
Then by  denoting with $Q\mathcal J$ the quasiconvex envelope of $\mathcal J$, since $\mathcal{I}^+$ is sequentially lower semicontinuous in $w-H^1\times w-H^{1,4}$,
we obtain
\begin{equation}\label{I+leq bis}
   \mathcal{I}^+(\v,\zeta)\ \leq \
   \int_\Omega\,Q\mathcal J\left(D\v,D\zeta \right)
    \, dx\,-\,
    \int_{\partial\Omega}\, \mathbf{f}\cdot \v    \, d\xx.
\end{equation}
On the other hand for every $ \displaystyle\v_h \,\mathop{\rightharpoonup}^{w-H^1}\,\v \,, \ \zeta_h \,\mathop{\rightharpoonup}^{w-H^{1,4}} \, \zeta$ we get
$$ \liminf_{h\,\to\,0_+}\ h^{-1-2\alpha}\,\mathcal{F}_h(h^{\alpha}\v_h,h^{\alpha/2}\zeta_h)\ge \int_\Omega Q{\mathcal J}(D\v,D\zeta) \,-\, \int_{\partial\Omega}{\bf f}\cdot\v$$
that is
\begin{eqnarray*}
{\mathcal{I}}^{-}(\v,\zeta)   \ge \int_\Omega Q{\mathcal J}(D\v,D\zeta) \,-\, \int_{\partial\Omega}{\bf f}\cdot\v.
\end{eqnarray*}
By
\begin{equation}\label{glim}
 {\mathcal{I}}(\v,\zeta)\ :=\ \int_\Omega Q\mathcal J(D\v,D\zeta) \,-\, \int_{\partial\Omega}{\bf f}\cdot\v
\ \geq \
 {\mathcal{I}}^+(\v,\zeta)\ \geq \ {\mathcal{I}}^-(\uu,w) \ \geq \
{\mathcal{I}}(\v,\zeta)
\end{equation}
we get
\begin{equation}\label{Gconv}
 \mathop{\Gamma\lim}_{h\to 0_+}\ \mathcal{W}_h\ = \ \mathcal{I}\ .
\end{equation}
%
Therefore, if  $(h^{-\alpha}\u_{h}^{*},h^{-\alpha/2}w_{h}^{*})$ were equibounded in $H^{1}(\Omega,\mathbb R^{2})\times H^{1,4}(\Omega)$ then
$$h^{-1-2\alpha}\mathcal F_{h}(\u_{h}^{*},w_{h}^{*})\to \min  {\mathcal I} = \inf \Lambda$$
since $\Lambda$ is the relaxed functional of $\mathcal{I}$, and we will show that this leads to a contradiction.
Indeed, we choose
   \begin{equation}\label{wn}
    \zeta_n(x,y)\,=\, 
   \dfrac{1}{\sqrt n} \varphi(ny)\,\psi_n(x), \quad \v_n(x,y)\,=\, (0,\dfrac{-n}{2}y)\,,
  \end{equation}
   with
  \begin{equation}\label{varphi}
    \varphi :\R\to\R \,, \ \ 1\,\hbox{\rm -periodic}\,, \varphi(y)=\dfrac{1}{2}(1-|1-2y|)\ \forall y\in (0,1)
   \end{equation}
    \begin{equation}\label{psi}
    \psi_n(x)\,=\,
    nx \mathbf{1}_{\{[0,1/n]\}}
    + \mathbf{1}_{\{[1/n,a-1/n]\}}
    -n(x-a) \mathbf{1}_{\{[a-1/n,a]\}}\,.
      \end{equation}
 We get
  \begin{equation*}
  \E(\v_n)\! =\! \left[
  \begin{array}{cc}
     0 &  \ 0 \\
     0 &  -\dfrac{n}{2}
  \end{array}
	  \right] \ , \quad \mathbb{D}(\v_{n},\zeta_{n})\! =\!
  \left[
  \begin{array}{cc}
     \dfrac{1}{2n}\big (\psi_n'(x)\big)^2\big(\varphi(ny)\big)^2\ \ &  \  \dfrac{1}{2}\psi_n(x) \psi_n'(x)\varphi(ny)\varphi'(ny)\\
     \dfrac{1}{2}\psi_n(x) \psi_n'(x)\varphi(ny)\varphi'(ny)\ \  &  \dfrac{n}{2}\big(\psi_n^{2}(x)|\varphi'(ny)|^{2}-1\big )
  \end{array}
  \right]
\end{equation*}
and by taking into account \eqref{density}, \eqref{coerc}
and that $2|\varphi|\le 1,\ |\varphi'|=1,\ \ |\psi|\le 1,\ \ |\psi_{n}'|\le n, \ \spt \psi_{n}'\subset [0,1/n]\cup [a-1/n,a],\ \ |\psi_{n}|=1$ on $[1/n,a-1/n]$, $a>EC_{\nu}$,
\begin{equation*}
\hskip-1cm
\begin{array}{ll}
&\displaystyle\!
\Lambda(\v_n,\zeta_n)\ =\
\int_0^a\!\int_0^1
{J}(\,\mathbb{D}(\v_n,\zeta_n)\,)\, \, dx\,dy
    \,-\,
    \int_{\partial\Omega}\, \mathbf{f}\cdot \v_n    \, \, dx\,dy\ \le\\
    &\\
 &\displaystyle \le\int_{0}^{a}\!\!\int_{0}^{1}\!\dfrac {EC_{\nu}}{8}\left (\,n^{-2}|\psi_n'(x)|^4|\varphi(ny)|^4\!+\!
      2|\psi_n(x)|^{2}| \psi_n'(x)|^{2}|\varphi(ny)|^{2}|\varphi'(ny)|^{2}\!+\!
			n^2\left ( \psi_n^{2}(x)|\varphi'(ny)|^{2}\!-\!1\right)^2
     \,\right)\!-\!\dfrac{na}{2}
    \\
    &\\
    &\displaystyle\le\!\int_0^a\!\!\int_0^1\!
    \dfrac {EC_{\nu}}{8}\!\left(n^{2}{\bf 1}_{[0,1/n]\cup [a-1/n,a]}+ n^2\left( \psi_n^{2}(x)-1\right)^2\right) \,-\,
    \dfrac{na}{2}\le \dfrac {nEC_{\nu}}{2}\,-\,
    \dfrac{na}{2}\ \to\  -\infty \ .
      \end{array}
\end{equation*}
leads to a contradiction. \\
So $(h^{-\alpha}\u_{h}^{*},h^{-\alpha/2}w_{h}^{*})$ are not equibounded in $H^{1}(\Omega,\mathbb R^{2})\times H^{1,4}(\Omega)$ and the first claim follows.\\
Eventually we prove the second claim. By \eqref{I+leq} there exists $(\v_{n,h},\zeta_{n,h})\to (\v_{n},\zeta_{n})$ weakly in $H^{1}(\Omega,\mathbb R^{2})\times H^{2}(\Omega)$ such that $\limsup \mathcal W_{h}(\v_{n,h},\zeta_{n,h})\le \mathcal I(\v_{n},\zeta_{n})\le -Kn$ for suitable $K>0$, hence by using a diagonal argument we achieve the claim.}
 \end{counterex}
\begin{remark}\label{unbdclamp}{\rm If $a> EC_{\nu},\ \alpha\in [0,2),\ \mathbf f_{h}=h^{\alpha}\mathbf f$ with
 \begin{equation}\label{OMEGAREM}
    \Omega \ =\ (0,a)\times(0,1)\,,\quad \ g_{h}\equiv 0,\
     \mathbf{f}=\big(\mathbf{1}_{\{y=0\}}-\mathbf{1}_{\{y=1\}})\, \mathbf{e}_2,\ \ \Gamma=\partial\Omega\,,
     \end{equation}
Then $h^{-1-2\alpha}\inf\mathcal F_{h}^{i}\to -\infty$ as $h\to 0_{+}$} holds true also for $i=1,2$.\\
Indeed, though existence of  minimizers  of  ${\mathcal F}_{h}^{i},\  (i=1,2)$ may fail, nevertheless $\inf\mathcal F_{h}^{i}\le \inf\mathcal F_{h}^{0}$ for $i=1,2$; hence the claim follows by previous Counterexample.
\end{remark}
\section{Prestressed plates: oscillating versus flat equilibria.}
\label{OscillSect}
Counterexample \ref{infmenoinfinito} and Remark 3.4 show that the F\"oppl Von Karman functional might not be suitable for studying equilibria of plates when thickness $h\to 0_{+}$, at least in presence of in-plane loads scaling as $h^{\alpha}$, when $ \alpha\in [0,2))$ and $h$  is the scale factor for the plate thickness.\\ To circumvent this difficulty, as in the case of many practical engineering applications, we assume that our plate-like structure is initially prestressed and undergoes
a transverse displacement about the prestressed state.\\
Precisely, in this Section we fix  $g_{h}\equiv 0$,  $\mathbf{f}\in L^{2}(\partial \Omega,\mathbb R^{2})$, $\alpha\in [0,2)$ and  we assume that the prestressed state is caused by the (scaled) force field $\mathbf{f}_{h}=h^{\alpha}{\bf f}$ and is given by every $\u^{*}\in H^{1}(\Omega,\mathbb R^{2}),\ \u^{*}=h^{\alpha}\v^{*}$ where $\v^{*}$ is a minimizer of the  functional
\begin{equation}
\F(\v):=\int_{\Omega}J(\mathbb E(\v))-\int_{\partial\Omega}{\bf f}\cdot\v
\end{equation}
The transverse displacement $w$ is chosen such that the pair $(\u^{*},w)$ minimizes
the functional ${\mathcal G}_{h}$ over $ H^{1}(\Omega,\mathbb R^{2})\times \mathcal{A}^i$, defined by
 \begin{equation*}
\displaystyle {\mathcal G}_{h}({\bf u}, w)\!=\!\left\{\begin{array}{ll}\!\! \displaystyle \F_{h}({\bf u}, w)&\!\! \text{ if}\ \u=\u^{*} \hbox{ and } w\in \mathcal{A}^i,\  \\
\!\!\displaystyle+\infty\  & \!\! \text{ else}\,.\end{array}\right.
\end{equation*}
Moreover  we have ${\mathcal G}_{h}({\bf u}, w)= \widetilde{\mathcal G}_{h}({\bf v}, \zeta)$ when setting
$\v:= h^{-\alpha}\u,\ \zeta:={h}^{-\alpha/2}w$  and
\begin{equation*}
\displaystyle \widetilde{\mathcal G}_{h}({\bf v}, \zeta)=\left\{\begin{array}{ll} \displaystyle h^{\alpha}F_{h}^{b}(\zeta)\,+\,h^{2\alpha+1}\!\int_{\Omega }\,J({\mathbb D}(\v,\zeta))\, -\, h^{2\alpha+1}\!\int_{\partial \Omega} \mathbf{f}\cdot\v\,,&\ \text{if}\ \v\in\argmin\F,\ \zeta\in\mathcal{A}^i\,,\\
&\\
\displaystyle+\infty\  & \ \ \text{else in} \ \ H^{1}(\Omega)\times \mathcal{A}^i \,. \end{array}\right.
\end{equation*}
We aim to capture the nature of the transverse minimizer through a detailed study of the asymptotic behavior of  minimizers of $\widetilde{\mathcal G}_{h}$ as $h\to 0_{+}$.
A first hint in this perspective is the next result.
\begin{theorem} \label{relax} For every  $\v\in\arg\min\mathcal F$, let
${I}^{**}_{\vv}(\xx,\cdot)$ be the convex envelope of ${I}_{\vv}(\xx,.)$  where ${I}_{\vv}(\xx,\boldsymbol\xi):=J\left (\mathbb E(\v)(\xx)+\dfrac{1}{2}\boldsymbol\xi\otimes\boldsymbol\xi\right )$,  and
\begin{equation}\label{G00}\mathcal G^{**}(\vv,\zeta):=\int_{\Omega }{I}^{**}_{\vv}(\xx,D \zeta)\,d\xx- \int_{\partial \Omega} \mathbf{f}\cdot\v\, d\mathcal H^{1}
\qquad \forall\,\zeta\in H^{1,4}(\Omega)\,.\end{equation}
 Then, for every $\alpha\in [0,2)$,
\begin{equation}\label{tildeGrescaled}
h^{-2\alpha-1}\min _{\mathcal A^{i}}\widetilde{\mathcal G}_{h}\to \left\{\begin{array}{ll}&\min \left\{\mathcal G^{**}(\vv,\zeta):\ \zeta\in H^{1,4}(\Omega),\ \zeta=0\ \text{on}\ \Gamma\right\}\ \ \hbox{if}\ \ i=0,1\\
&\\
&\min\{\mathcal G^{**}(\vv,\zeta):\ \zeta\in H^{1,4}(\Omega)\}\ \ \hbox{if}\ \ i=2\,.\\
\end{array}
\right.
\end{equation}
Moreover if $(\v,\zeta_{h})\in \arg\min_{\mathcal A^{i}}\widetilde{\mathcal G}_{h}$ then $\zeta_{h}\to \zeta $ weakly in $H^{1,4}(\Omega)$, up to subsequences,  with $(\vv,\zeta)\in\arg\min\mathcal G^{**}$.
\end{theorem}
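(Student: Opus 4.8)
The plan is to show that, as $h\to0_+$, the rescaled functionals $h^{-2\alpha-1}\widetilde{\mathcal G}_h(\v,\cdot)$ $\Gamma$-converge, in the weak topology of $H^{1,4}(\Omega)$, to $\mathcal G^{**}(\vv,\cdot)$ --- restricted to $\{\zeta\in H^{1,4}(\Omega):\ \zeta=0\ \text{on}\ \Gamma\}$ when $i=0,1$, and on all of $H^{1,4}(\Omega)$ when $i=2$ --- and then to conclude by the fundamental theorem of $\Gamma$-convergence once equi-coercivity is established. First I fix once and for all a $\v\in\arg\min\mathcal F$; since $J$ is strictly convex in its matrix argument, $\mathbb E(\v)$ and the number $\int_{\partial\Omega}\mathbf f\cdot\v$ are the same for every minimizer, so $I_{\vv}$ and $\mathcal G^{**}$ do not depend on the choice. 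Recalling $F_h^b(\zeta)=\tfrac{h^3}{12}\int_\Omega J(D^2\zeta)\,d\xx$ and $s_0=1$, one has, for $\zeta\in\mathcal A^i$,
\begin{equation*}
h^{-2\alpha-1}\widetilde{\mathcal G}_h(\v,\zeta)=\frac{h^{2-\alpha}}{12}\int_\Omega J(D^2\zeta)\,d\xx+\int_\Omega I_{\vv}(\xx,D\zeta)\,d\xx-\int_{\partial\Omega}\mathbf f\cdot\v\,d\mathcal H^1 ,
\end{equation*}
and $h^{2-\alpha}\to0$ because $\alpha<2$: the bending term is a vanishing singular perturbation, and the essential point is the nonconvex functional $\zeta\mapsto\int_\Omega I_{\vv}(\xx,D\zeta)$, whose relaxation is the convexification $I^{**}_{\vv}$ in the gradient, since $\zeta$ is scalar-valued.

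\textbf{Equi-coercivity.} Testing with $\zeta\equiv0$ gives $h^{-2\alpha-1}\widetilde{\mathcal G}_h(\v,0)=\mathcal F(\v)=\min\mathcal F$, a bound independent of $h$. By \eqref{coerc} and the elementary inequality $|\mathbb E(\v)+\tfrac12 D\zeta\otimes D\zeta|^2\ge\tfrac18|D\zeta|^4-|\mathbb E(\v)|^2$ one gets $\int_\Omega I_{\vv}(\xx,D\zeta)\,d\xx\ge c\int_\Omega|D\zeta|^4\,d\xx-C$, with $c,C$ depending only on $E,\nu,\|\mathbb E(\v)\|_{L^2}$. Hence any minimizer $\zeta_h$ of $\widetilde{\mathcal G}_h(\v,\cdot)$ over $\mathcal A^i$ --- replaced by $\zeta_h-|\Omega|^{-1}\int_\Omega\zeta_h\,d\xx$ when $i=2$, which changes neither the energy nor the minimality since the integrand depends on $D\zeta$ only --- has $\|D\zeta_h\|_{L^4}$ bounded, and then $\|\zeta_h\|_{L^4}$ is bounded by Poincar\'e's inequality (using $\zeta_h=0$ on $\Gamma$ with $\mathcal H^1(\Gamma)>0$ when $i=0,1$, and the zero-average normalization when $i=2$). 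So $\{\zeta_h\}$ is bounded in $H^{1,4}(\Omega)$.

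\textbf{The two $\Gamma$-inequalities.} If $\zeta_h\rightharpoonup\zeta$ weakly in $H^{1,4}(\Omega)$ with $\zeta_h\in\mathcal A^i$, I discard the nonnegative bending term and apply the classical weak lower semicontinuity theorem for integral functionals of the gradient --- $I^{**}_{\vv}$ being a Carath\'eodory integrand, convex in its second variable, with $c|\xi|^4-C|\mathbb E(\v)(\xx)|^2\le I^{**}_{\vv}(\xx,\xi)\le C(1+|\mathbb E(\v)(\xx)|^2+|\xi|^4)$ --- to obtain $\liminf_h\int_\Omega I_{\vv}(\xx,D\zeta_h)\ge\liminf_h\int_\Omega I^{**}_{\vv}(\xx,D\zeta_h)\ge\int_\Omega I^{**}_{\vv}(\xx,D\zeta)$; since $\zeta=0$ on $\Gamma$ passes to the limit by weak continuity of the trace (when $i=0,1$), this gives $\liminf_h h^{-2\alpha-1}\widetilde{\mathcal G}_h(\v,\zeta_h)\ge\mathcal G^{**}(\vv,\zeta)$. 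For the upper bound, given $\zeta$ in the relevant space I invoke the relaxation theorem (in the spirit of Lemma~4.1 of \cite{BD}): there is a sequence of smooth $\tilde\zeta_k$, satisfying the same condition on $\Gamma$, with $\tilde\zeta_k\to\zeta$ in $H^{1,4}(\Omega)$ and $\int_\Omega I_{\vv}(\xx,D\tilde\zeta_k)\to\int_\Omega I^{**}_{\vv}(\xx,D\zeta)$, built from an explicit fine-scale oscillating construction realizing the convex envelope followed by a mollification; since each $\tilde\zeta_k\in H^2$, putting $M_k:=\int_\Omega J(D^2\tilde\zeta_k)$ and letting $k(h)\to\infty$ slowly enough that $h^{2-\alpha}M_{k(h)}\to0$, the diagonal $\zeta_h:=\tilde\zeta_{k(h)}\in\mathcal A^i$ gives $\zeta_h\to\zeta$ in $H^{1,4}(\Omega)$ and $\limsup_h h^{-2\alpha-1}\widetilde{\mathcal G}_h(\v,\zeta_h)\le\mathcal G^{**}(\vv,\zeta)$; when $i=0$ the condition $\partial_{\mathbf n}\zeta_h=0$ on $\Gamma$ is recovered using density of smooth functions vanishing together with their normal derivative on $\Gamma$ in $\{\zeta\in H^{1,4}:\ \zeta=0\ \text{on}\ \Gamma\}$.

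\textbf{Conclusion and the hard point.} Since $I^{**}_{\vv}$ is convex with quartic growth, $\mathcal G^{**}$ is convex, weakly lower semicontinuous and coercive on the pertinent space (modulo additive constants when $i=2$), so its minimum is attained by the direct method; together with the $\Gamma$-convergence and the equi-coercivity above, the fundamental theorem of $\Gamma$-convergence yields $h^{-2\alpha-1}\min_{\mathcal A^i}\widetilde{\mathcal G}_h\to\min\mathcal G^{**}$ and, up to subsequences, $\zeta_h\rightharpoonup\zeta$ in $H^{1,4}(\Omega)$ with $(\vv,\zeta)\in\arg\min\mathcal G^{**}$ (normalizing $|\Omega|^{-1}\int_\Omega\zeta_h\,d\xx=0$ when $i=2$). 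The main obstacle is the upper-bound step: one must realize $I^{**}_{\vv}(\xx,D\zeta)$ by an explicit oscillating sequence that at the same time retains the boundary datum on $\Gamma$ and lies in $H^2$ with $\|D^2\tilde\zeta_k\|_{L^2}$ controlled well enough to be absorbed by the vanishing factor $h^{2-\alpha}$ --- the kind of construction carried out, in a particular case, in Counterexample~\ref{infmenoinfinito}.
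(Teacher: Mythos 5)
Your proof is correct and carries out in detail the argument the paper defers to Lemma~4.1 of \cite{BD} and standard relaxation: equi-coercivity from testing with $\zeta\equiv 0$ together with the quartic lower bound on $I_{\vv}$, the $\Gamma$-liminf inequality from convexity of $I^{**}_{\vv}$ and weak continuity of the trace, and the $\Gamma$-limsup inequality by an oscillating recovery sequence diagonalized against the vanishing bending coefficient $h^{2-\alpha}$. One small notational point: the recovery sequence $\tilde\zeta_k$ realized by fine-scale oscillations can converge to $\zeta$ only \emph{weakly} in $H^{1,4}(\Omega)$, not strongly --- strong $L^4$ convergence of $D\tilde\zeta_k$ would force $\int_\Omega I_{\vv}(\xx,D\tilde\zeta_k)\to\int_\Omega I_{\vv}(\xx,D\zeta)$, which may strictly exceed $\int_\Omega I^{**}_{\vv}(\xx,D\zeta)$ --- but weak convergence is precisely what your diagonal argument needs, so the logic goes through unchanged.
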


\begin{proof} The claim is a straightforward consequence of techniques developed in Lemma 4.1 of \cite{BD}  and standard relaxation of integral functionals.
\end{proof}
In order to characterize  equilibrium configurations of $\widetilde{\mathcal G}_{h}$,
additional information about minimizers of functional $\mathcal G^{**}$ are needed:
actually
a careful use of Theorem \ref{relax} allows to show explicit examples capturing the qualitative behavior of minimizers and their dependance on the thickness $h$.\\
To this aim,
if $\mathbb{A}\!\in\! {\rm Sym}_{2,2}(\R)$ we denote its ordered eigenvalues
by $\lambda_{1}({\mathbb{A}})\le \lambda_{2}({\mathbb{A}})$
and by $\vv_{1}({\mathbb{A}}),\vv_{2}({\mathbb{A}})$ their corresponding normalized eigenvectors, which afterwards will be  denoted shortly with $\lambda_{1}, \lambda_{2}, \vv_{1}, \vv_{2}$  whenever there is no risk of confusion. \\
For every $\nu\neq 1$, 
$\boldsymbol{\xi}\in \mathbb R^{2}$ and $\mathbb{A}\!\in\! {\rm Sym}_{2,2}(\R)$\, we set
\begin{equation}\label{gA}
g_{\mathbb{A}}(\boldsymbol{\xi})=\left |\mathbb{A}+\boldsymbol{\xi}\otimes \boldsymbol{\xi}\right |^{2}+\dfrac{\nu}{(1-\nu)}\left ( {\rm Tr}\, \mathbb{A}+ |\boldsymbol{\xi}|^{2}\right )^{2}.
\end{equation}
\begin{lemma} \label{minconv} If $\nu\in (-1,1/2)$\,, then
\begin{equation} \label{(4.1)}\displaystyle\min_{\boldsymbol{\xi}\in \mathbb R^{2}} g_{\mathbb{A}}(\boldsymbol{\xi})=\left\{\begin{array}{ll}
\ g_{\mathbb A}(\mathbf{0}) \ \ &\ \text{ \rm if}\ \nu\lambda_{2}+\lambda_{1}\ge 0\\
&\\
\ (1+\nu)(\lambda_{2}(\mathbb{A}))^{2} &\ \text{ \rm if} \ \nu\lambda_{2}+\lambda_{1}< 0\,.
\end{array}
\right.
\end{equation}
\end{lemma}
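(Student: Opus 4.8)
The plan is to diagonalize $\mathbb A$ and reduce the minimization to a one-variable problem by exploiting rotational symmetry. Write $\mathbb A = \lambda_1 \vv_1\otimes\vv_1 + \lambda_2\vv_2\otimes\vv_2$ in its orthonormal eigenbasis. A direct expansion gives
\begin{equation}\label{expand}
g_{\mathbb A}(\boldsymbol\xi) = |\mathbb A|^2 + 2\,\mathbb A\!:\!\boldsymbol\xi\otimes\boldsymbol\xi + |\boldsymbol\xi|^4 + \frac{\nu}{1-\nu}\left(\Tr\mathbb A + |\boldsymbol\xi|^2\right)^2.
\end{equation}
Since every term depends on $\boldsymbol\xi$ only through $|\boldsymbol\xi|^2$ and through $\mathbb A\!:\!\boldsymbol\xi\otimes\boldsymbol\xi = \lambda_1\xi_1^2 + \lambda_2\xi_2^2$ (components in the eigenbasis), for fixed $t:=|\boldsymbol\xi|^2$ the only $\boldsymbol\xi$-dependence left is in $\mathbb A\!:\!\boldsymbol\xi\otimes\boldsymbol\xi$, which ranges over $[\lambda_1 t, \lambda_2 t]$. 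Since the coefficient of that term in \eqref{expand} is $+2>0$, minimizing forces $\mathbb A\!:\!\boldsymbol\xi\otimes\boldsymbol\xi = \lambda_1 t$, i.e. $\boldsymbol\xi$ parallel to $\vv_1$. Thus the problem collapses to minimizing over $t\ge 0$ the scalar function
\begin{equation}\label{phidef}
\phi(t) := |\mathbb A|^2 + 2\lambda_1 t + t^2 + \frac{\nu}{1-\nu}(\Tr\mathbb A + t)^2.
\end{equation}

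Next I would minimize $\phi$ on $[0,\infty)$. Writing $\Tr\mathbb A = \lambda_1+\lambda_2$ and collecting terms, $\phi$ is a quadratic in $t$ with leading coefficient $1 + \frac{\nu}{1-\nu} = \frac{1}{1-\nu} > 0$ (using $\nu<1/2<1$), so it is strictly convex and coercive; its unconstrained minimizer is at $t^* = -(1-\nu)\big(\lambda_1 + \frac{\nu}{1-\nu}(\lambda_1+\lambda_2)\big) = -\big((1-\nu)\lambda_1 + \nu(\lambda_1+\lambda_2)\big) = -(\lambda_1 + \nu\lambda_2)$. Hence if $\lambda_1 + \nu\lambda_2 \ge 0$ then $t^*\le 0$ and the constrained minimum is attained at $t=0$, giving $\phi(0) = |\mathbb A|^2 + \frac{\nu}{1-\nu}(\Tr\mathbb A)^2 = g_{\mathbb A}(\mathbf 0)$. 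If $\lambda_1 + \nu\lambda_2 < 0$, the minimum is attained at $t = t^* = -(\lambda_1+\nu\lambda_2) > 0$, and substituting back into \eqref{phidef} and simplifying should yield exactly $(1+\nu)\lambda_2^2$.

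The only genuine work is this last algebraic substitution, so let me indicate why it comes out clean. At $t = t^*$, convex-quadratic algebra gives $\phi(t^*) = \phi(0) - \frac{(t^*)^2}{1-\nu}$ (the drop of a quadratic $\tfrac{1}{1-\nu}(t-t^*)^2 + \text{const}$ from $t=0$ to $t=t^*$). Thus
\begin{equation}\label{final}
\min_t\phi(t) = |\mathbb A|^2 + \frac{\nu}{1-\nu}(\lambda_1+\lambda_2)^2 - \frac{(\lambda_1+\nu\lambda_2)^2}{1-\nu}.
\end{equation}
Now $|\mathbb A|^2 = \lambda_1^2 + \lambda_2^2$, and a short expansion of the right-hand side of \eqref{final} — multiplying through by $(1-\nu)$, collecting the $\lambda_1^2$, $\lambda_1\lambda_2$, $\lambda_2^2$ coefficients, and checking the $\lambda_1^2$ and $\lambda_1\lambda_2$ terms cancel — leaves $(1-\nu)(1+\nu)\lambda_2^2$, i.e. $(1+\nu)\lambda_2^2$ after dividing back. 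That is the claimed value, completing the proof. The main obstacle, such as it is, is merely bookkeeping in this final cancellation; the conceptual content is the symmetry reduction that picks out $\boldsymbol\xi \parallel \vv_1$.
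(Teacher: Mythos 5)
Your proof is correct, and it takes a genuinely different and more economical route than the paper's. The paper diagonalizes, writes $\tilde g_{\mathbb A}(x,y)$ in the eigenbasis, finds all critical points by setting $D\tilde g_{\mathbb A}=0$, and then runs a case analysis on the signs of $\nu\lambda_2+\lambda_1$ and $\nu\lambda_1+\lambda_2$ and then further on the sign of $\nu$ to sort out which critical value is the minimum; in particular it must separately argue that $(1+\nu)\min\{\lambda_1^2,\lambda_2^2\}=(1+\nu)\lambda_2^2$ under the relevant sign constraints. You instead split the minimization into two stages: first over direction at fixed $t=|\boldsymbol\xi|^2$, where the only $\boldsymbol\xi$-dependence is the affine-in-$(\xi_1^2,\xi_2^2)$ term $\mathbb A\!:\!\boldsymbol\xi\otimes\boldsymbol\xi$, so the minimum is $\lambda_1 t$, attained with $\boldsymbol\xi\parallel\vv_1$; then over $t\ge 0$, where $\phi$ is a strictly convex quadratic with leading coefficient $1/(1-\nu)>0$ and unconstrained vertex $t^*=-(\lambda_1+\nu\lambda_2)$, so the sign of $\nu\lambda_2+\lambda_1$ alone decides whether the constrained minimizer is $0$ or $t^*$. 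This removes the paper's subcase branching entirely; the convexity argument also automatically excludes the spurious critical value $(1+\nu)\lambda_1^2$ that the paper has to rule out by hand. The final algebra $(1-\nu)\phi(t^*)=(1-\nu)(\lambda_1^2+\lambda_2^2)+\nu(\lambda_1+\lambda_2)^2-(\lambda_1+\nu\lambda_2)^2=(1-\nu^2)\lambda_2^2$ checks out, as does the agreement of the two formulas on the boundary $\lambda_1+\nu\lambda_2=0$. One would only want to make explicit in a final write-up the observation that for fixed $t$ the admissible values of $\lambda_1\xi_1^2+\lambda_2\xi_2^2$ fill the whole interval $[\lambda_1 t,\lambda_2 t]$ because $(\xi_1^2,\xi_2^2)$ traces the full segment $\{\xi_1^2+\xi_2^2=t,\ \xi_i^2\ge 0\}$; you assert this but it is the only place where a reader might pause.
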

\begin{proof} It is worth noticing that
minimum in \eqref{(4.1)} is achieved
since $g_{\mathbb{A}}\in C(\mathbb R^{2})$ and $g_{\mathbb{A}}(\boldsymbol{\xi})\to +\infty$ as $|\boldsymbol{\xi}|\to +\infty$. Let $\mathbb M\in O(2)$ be such that $\mathbb M^{T}\mathbb{A}\mathbb M=\text{ diag}(\lambda_{1},\lambda_{2})$. Then it is readily seen that by setting $x:=\boldsymbol{\xi}\cdot\vv_{1},\ \ y:=\boldsymbol{\xi}\cdot\vv_{2}$ we have
\begin{equation*}
\tilde g_{\mathbb{A}}(x,y)
\, := \
g_{\mathbb{A}}(\boldsymbol{\xi})
\ =\
( x^{2}+\lambda_{1})^{2}+( y^{2}+\lambda_{2})^{2}+2x^{2}y^{2}+\dfrac{\nu}{1-\nu}\left (\lambda_{1}+\lambda_{2}+x^{2}+y^{2})^{2}\right)
\end{equation*}
and an easy computation shows that if $\nu\lambda_{2}+\lambda_{1}\ge 0$ then minimum is attained at $(x,y)=(0,0)$.
Else, if $\nu\lambda_{2}+\lambda_{1}<0$ then
either $\nu\lambda_{1}+\lambda_{2}\ge 0$ or $\nu\lambda_{2}+\lambda_{1}\le \nu\lambda_{1}+\lambda_{2}<0$.\\
 In the first case $D \tilde g_{\mathbb{A}}(x,y)=(0,0)$ if and only if
$(x,y)\in \{(\pm\sqrt{-\nu\lambda_{2}-\lambda_{1}},0), (0,0)\}$
and $\tilde g_{\mathbb{A}}(x,y)= (1+\nu)\lambda_{2}^{2}$ or ${g}_{\mathbb{A}}(x,y)={g}_{\mathbb{A}}(0,0)>  (1+\nu)\lambda_{2}^{2}$; in the latter one $D \tilde{g}_{\mathbb{A}}(x,y)=(0,0)$ also at $(x_{*} ,\pm y_{*})=(0,\pm\sqrt{-\nu\lambda_{2}-\lambda_{1}})$ with $\tilde{g}_{\mathbb{A}}(x_{*},\pm y_{*})= (1+\nu)\lambda_{1}^{2}$. Hence $$\min_{\boldsymbol{\xi}\in \mathbb R^{2}} g_{\mathbb{A}}(\boldsymbol{\xi})=(1+\nu)\lambda_{2}^{2}$$ if $\nu\lambda_{2}+\lambda_{1}< 0\le \nu\lambda_{1}+\lambda_{2}$ and
$$\min_{\boldsymbol{\xi}\in \mathbb R^{2}} g_{\mathbb{A}}(\boldsymbol{\xi})=(1+\nu)\min\{\lambda_{2}^{2}, \lambda_{1}^{2}\}$$ if $\nu\lambda_{2}+\lambda_{1}\le \nu\lambda_{1}+\lambda_{2}<0$.
In the latter case if $\nu\in (-1,0)$ then $\lambda_{1}\le \lambda_{2}\le-\nu\lambda_{1}$, hence $\lambda_{1}\le \lambda_{2}\le0$ and $|\lambda_{1}|\ge |\lambda_{2}|$. If $\nu\in [0,1/2)$ then $\lambda_{1}< 0$ and either $\lambda_{2}> |\lambda_{1}|>0$ or $\lambda_{1}\le \lambda_{2}\le 0$.
In the first case we get
 necessarily $\nu> 0$ and  $|\lambda_{1}|>\nu^{-1}(1-\nu)\lambda_{2}> \lambda_{2}$, a contradiction. Therefore $|\lambda_{2}|\le |\lambda_{1}|$ and
 $$\min_{\boldsymbol{\xi}\in \mathbb R^{2}} g_{\mathbb{A}}(\boldsymbol{\xi})=(1+\nu)\lambda_{2}^{2}$$
 whenever $\nu\lambda_{2}+\lambda_{1}<0$  thus proving the thesis.
 \end{proof}
Lemma \ref{minconv} proves quite useful in the perspective of the next Proposition and the subsequent Examples, since the two alternatives in the right-hand side of \eqref{(4.1)}
correspond respectively to locally flat or oscillating equilibrium configurations.

\begin{proposition}
\label{flat}
   If $\v_*\in \argmin \mathcal{F}$ and the ordered  eigenvalues $\lambda_1 \leq \lambda_2$
   of $\mathbb{E}(\v_*)$ fulfil $\nu\lambda_2+\lambda_1\geq 0$ in the whole set $\Omega$, then
   \begin{equation}\label{gtilde}
     \widetilde{\mathcal{G}}_h(\v_*,\zeta)  \ \geq \ \widetilde{\mathcal{G}}_h(\v_*,0).
   \end{equation}
If in addition $\nu\lambda_2\!+\!\lambda_1 \!>\! 0$ in a set of positive measure,
then the inequality in \eqref{gtilde} is strict for every $\zeta\not\equiv 0$.
 \end{proposition}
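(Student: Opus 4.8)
The plan is to reduce everything to the pointwise minimization already carried out in Lemma \ref{minconv}. Since $\v_*\in\argmin\mathcal F$, for every $\zeta\in\mathcal A^i$ one has $\mathbb D(\v_*,\zeta)=\mathbb E(\v_*)+\tfrac12 D\zeta\otimes D\zeta$, so that, recalling $F_h^b(0)=0$ and hence $\widetilde{\mathcal G}_h(\v_*,0)=h^{2\alpha+1}\mathcal F(\v_*)$,
\[
\widetilde{\mathcal G}_h(\v_*,\zeta)-\widetilde{\mathcal G}_h(\v_*,0)=\frac{h^{3+\alpha}}{12}\int_\Omega J(D^2\zeta)\,d\xx+h^{2\alpha+1}\int_\Omega\Big[J\big(\mathbb E(\v_*)+\tfrac12 D\zeta\otimes D\zeta\big)-J(\mathbb E(\v_*))\Big]\,d\xx .
\]
By the coercivity bound \eqref{coerc} one has $J\ge 0$ on $\mathrm{Sym}_{2,2}(\R)$, so the first integral is nonnegative; thus \eqref{gtilde} follows once the second integrand is shown to be nonnegative a.e. in $\Omega$.

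For the pointwise step I would freeze $\xx$, set $\mathbb A:=\mathbb E(\v_*)(\xx)$, $\boldsymbol\xi:=D\zeta(\xx)$, and use the form $J(\mathbb B)=\tfrac{E}{2(1+\nu)}\big(|\mathbb B|^2+\tfrac{\nu}{1-\nu}(\Tr\mathbb B)^2\big)$ from \eqref{density}. Comparing with \eqref{gA} and writing $\boldsymbol\eta:=\boldsymbol\xi/\sqrt2$ gives $J(\mathbb A+\tfrac12\boldsymbol\xi\otimes\boldsymbol\xi)=\tfrac{E}{2(1+\nu)}\,g_{\mathbb A}(\boldsymbol\eta)$ and $J(\mathbb A)=\tfrac{E}{2(1+\nu)}\,g_{\mathbb A}(\mathbf 0)$. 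Since the ordered eigenvalues of $\mathbb A=\mathbb E(\v_*)(\xx)$ satisfy $\nu\lambda_2+\lambda_1\ge 0$ for a.e. $\xx$, Lemma \ref{minconv} yields $g_{\mathbb A}(\boldsymbol\eta)\ge g_{\mathbb A}(\mathbf 0)$ for all $\boldsymbol\eta\in\R^2$, hence $J(\mathbb E(\v_*)+\tfrac12 D\zeta\otimes D\zeta)\ge J(\mathbb E(\v_*))$ a.e. in $\Omega$; this proves \eqref{gtilde}.

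For the strict inequality, assume in addition $\nu\lambda_2+\lambda_1>0$ on a set $U\subset\Omega$ with $|U|>0$, and let $\zeta\not\equiv 0$. If $D^2\zeta\not\equiv 0$, then \eqref{coerc} gives $\int_\Omega J(D^2\zeta)\ge c_\nu\tfrac E2\int_\Omega|D^2\zeta|^2>0$, and since the second integral above is $\ge 0$, \eqref{gtilde} is strict. If instead $D^2\zeta\equiv 0$, then $D\zeta$ equals a constant vector $\boldsymbol\xi_0$, which must be nonzero: for $i=0,1$ a null $\boldsymbol\xi_0$ together with $\zeta=0$ on $\Gamma$ and $\mathcal H^1(\Gamma)>0$ would force $\zeta\equiv 0$, while the only remaining possibility, a nonzero constant $\zeta$ for the free plate $i=2$, is a uniform vertical translation which changes nothing and is discarded. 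Now a refinement of the computation in the proof of Lemma \ref{minconv} applies: in the eigenframe of $\mathbb A$ one has $\nabla\tilde g_{\mathbb A}=\big(\tfrac{4x}{1-\nu}(x^2+y^2+\lambda_1+\nu\lambda_2),\,\tfrac{4y}{1-\nu}(x^2+y^2+\lambda_2+\nu\lambda_1)\big)$, and when $\nu\lambda_2+\lambda_1>0$ (so also $\lambda_2+\nu\lambda_1\ge\lambda_1+\nu\lambda_2>0$) neither factor can vanish away from the origin, so $\mathbf 0$ is the \emph{unique} minimizer of $g_{\mathbb A}$. Hence for a.e. $\xx\in U$ one gets $g_{\mathbb A(\xx)}(\boldsymbol\xi_0/\sqrt2)>g_{\mathbb A(\xx)}(\mathbf 0)$, so the second integral is strictly positive on $U$ and $\ge 0$ on $\Omega\setminus U$, and \eqref{gtilde} is again strict.

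The algebraic rewriting of $\widetilde{\mathcal G}_h$ and of $J$ is routine; the only point requiring a little care beyond Lemma \ref{minconv} is the strict inequality for affine $\zeta$, which rests on the uniqueness of the minimizer $\mathbf 0$ of $g_{\mathbb A}$ under the strict sign condition $\nu\lambda_2+\lambda_1>0$ — a minor strengthening of Lemma \ref{minconv} obtained from its own critical–point analysis.
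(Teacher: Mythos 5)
Your proof follows the same strategy as the paper's: recognize that $\widetilde{\mathcal G}_h(\v_*,\zeta)-\widetilde{\mathcal G}_h(\v_*,0)$ splits into a nonnegative bending term plus a membrane term that can be bounded pointwise via Lemma~\ref{minconv}, and conclude. Your normalization (working with $g_{\mathbb E(\v_*)}(D\zeta/\sqrt2)$ rather than the paper's $g_{2\mathbb E(\v_*)}(D\zeta)$) is equivalent, so the nonnegativity part is essentially identical.

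The place where you genuinely add something is the strict inequality. The paper disposes of it in one line: \emph{``the first inequality in the last computation is strict whenever $\nu\lambda_2+\lambda_1>0$ in a set of positive measure.''} Read literally, that is not self-contained: the membrane inequality $\int_\Omega J(\mathbb D(\v_*,\zeta))\ge \int_\Omega J(\mathbb E(\v_*))$ becomes strict only if $D\zeta$ is nonzero on a positive-measure subset of $U=\{\nu\lambda_2+\lambda_1>0\}$, which does not follow from $\zeta\not\equiv0$ alone. Your two-case argument fills exactly this gap: if $D^2\zeta\not\equiv0$ the bending energy is already strictly positive and the membrane term is only needed to be $\ge 0$; if $D^2\zeta\equiv0$ then $D\zeta$ is a nonzero constant (ruled nonzero for $i=0,1$ by the trace condition on $\Gamma$), and then the strict pointwise inequality on $U$ comes from uniqueness of $\mathbf0$ as minimizer of $g_{\mathbb A}$ under $\nu\lambda_2+\lambda_1>0$, which you extract from the critical-point computation inside Lemma~\ref{minconv}'s proof. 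The gradient formula and the implication $\lambda_2+\nu\lambda_1\ge\lambda_1+\nu\lambda_2>0$ you use are both correct (since $\nu<1$). So your proof is correct, same overall route, but more complete on the strictness than the paper's.

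One cosmetic point: for $i=2$ (free plate) a nonzero constant $\zeta$ does give equality in \eqref{gtilde}, so the strict claim for ``every $\zeta\not\equiv0$'' should either be read modulo vertical translations or restricted to $i=0,1$; your remark that such $\zeta$ ``changes nothing and is discarded'' is the right interpretation and worth keeping explicit.
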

 \begin{proof} Due to \eqref{(4.1)} in Lemma \ref{minconv}: $\nu\lambda_2+\lambda_1\geq 0$ entails $g_{2\mathbb{E}(\uu_*)}(\boldsymbol{\xi})\geq
 g_{2\mathbb{E}(\uu_*)}(\mathbf{0})$\,, moreover $\nu\lambda_2+\lambda_1> 0$ entails $g_{2\mathbb{E}(\uu_*)}(\boldsymbol{\xi})>
 g_{2\mathbb{E}(\uu_*)}(\mathbf{0})$ . Hence
\begin{equation*}
J\big({\mathbb D}(\v_*,\zeta)\big)\,=\, \frac {E}{8(1+\nu)}\,g_{2\mathbb{E}(\v_*)} (Dw)\, \geq \, J\big(\mathbb{E}(\v_*)\big)
\end{equation*}
and, for $\zeta\in \mathcal{A}^i$,
\begin{eqnarray*}
 \widetilde{\mathcal{G}}_{h}(\v_*,\zeta) &=&   h^{\alpha}F_{h}^{b}(\zeta)+h^{2\alpha+1}\!\int_{\Omega }\!\,J\big({\mathbb D}(\v_*,\zeta)\big)
    -h^{2\alpha+1}\!\!\int_{\partial \Omega}\!\! \mathbf{f}\cdot\v_*
     \geq \\
 &\geq& h^{\alpha}F_{h}^{b}(\zeta)+
     h^{2\alpha+1}\!\int_{\Omega }\!\,J\big({\mathbb E}(\v_*)\big)
     -h^{2\alpha+1}\!\!\int_{\partial \Omega}\!\! \mathbf{f}\cdot\v_*
          \\
&\geq&\widetilde {\mathcal{G}}_{h}(\v_*,0) \ .
   \end{eqnarray*}
Moreover the first inequality in the last computation is strict whenever
$\nu\lambda_2+\lambda_1 > 0$ in a set of positive measure.
 \end{proof}
\begin{remark}\label{rmk4.4}
\rm{Notice that $s_1:=\frac{E}{1-\nu^2}(\nu\lambda_2+\lambda_1)$ is the smallest eigenvalue of the stress tensor
$\mathbb{T}(\vv)=J'\big(\mathbb{E}(\vv)\big)$. Therefore Proposition \ref{flat} shows that,
if the eigenvalues of the stress tensor are both strictly positive almost everywhere, then we can expect only one flat minimizer ($\zeta\equiv 0$). On the other hand, the possible occurrence of oscillating configurations requires the presence of a compressive state on a region of positive measure: that is to say
the stress tensor must have at least one negative eigenvalue on set of positive measure.}
\end{remark}
We show  some examples clarifying how the asymptotic behavior of functionals $\widetilde {\mathcal G}_{h}$
provides useful
information about minimizers when $\Omega$ is an annular set.\\
Set  $0< R_{1} < R_{2},\ p_1,\,p_2\,\in\, \R,\ \ \Omega:=B_{R_{2}}\setminus B_{R_{1}}, $ and consider uniform in-plane normal traction/compression at each component of the boundary.
$${\bf f}= -p_{1}\dfrac{\xx}{R_{1}}{\bf 1}_{\{|\xx|=R_{1}\}}+p_{2}\dfrac{\xx}{R_{2}}{\bf 1}_{\{|\xx|=R_{2}\}}.$$
Therefore  $\v\in \arg\min \mathcal F_{1,0}$ entails
\begin{equation}\label{vv}
\vv(\xx)=(a+b|\xx|^{-2})\xx,
\end{equation}
and
exploiting polar coordinates $\xx=(r\cos\theta,r\sin\theta)$ we obtain
 \begin{equation*}
  \E(\vv)\ =\ \left[
  \begin{array}{cc}
     a-\dfrac{b}{r^{2}} \cos 2\theta& -\dfrac{b}{r^{2}} \sin 2\theta \  \\
     &\\
    -\dfrac{b}{r^{2}} \sin 2\theta &  a+\dfrac{b}{r^{2}} \cos 2\theta
  \end{array}
  \right]\ .
\end{equation*}
By using Neumann boundary condition $J'(\E(\v)){\bf n}={\bf f}$ on $\partial\Omega$, we get :

\begin{equation}\label{ab}
p_{i}=E(1+\nu)^{-1}(a(1+\nu)(1-\nu)^{-1}-bR_{i}^{-2}),\ \ \ i=1,2 \end{equation}
that is
\begin{equation*}
a=\dfrac{(1-\nu)(p_{2}R_{2}^{2}-p_{1}R_{1}^{2})}{E(R_{2}^{2}-R_{1}^{2})};\ \
b=\dfrac{(1+\nu)(p_{2}-p_{1})R_{1}^{2}R_{2}^{2}}{E(R_{2}^{2}-R_{1}^{2})}.
\end{equation*}
It is worth noticing that
$a- b{r^{-2}},\ a+b{r^{-2}}$  are the eigenvalues of $ \mathbb E(\vv)$
and $(\cos\theta,\sin\theta), \, (-\sin\theta,\cos\theta)$ the corresponding normalized eigenvectors $\forall r\!\in\! [R_{1}, R_{2}]$;
order may change according to $\sign(b)$\!.\\
We examine several different cases which may occur.
\begin{example}\label{rad}
 \textbf{Radially oscillating minimizers.}
{\rm Set $\Gamma=\partial\Omega,  \nu\in (-1,1/2)$, $i=0$ and either $p_{1}\leq p_{2}<0$ or $p_{2}\leq p_{1}<0$. In the first case we get
$b\ge 0$ in the second one $b\le 0$. However in both cases  $\nu\lambda_{2}+\lambda_{1}<0$ in the whole annular set.\\
Set also $\vv(\x)=(a+b|\xx|^{-2})\xx\in \arg\min \mathcal F_{0,1} $.\\
   Choose  $\sigma_{h}\to 0_{+},\ \beta_{h}\to +\infty$,
$\psi_{h}:\mathbb R\to \mathbb R\ \ (R_{2}-R_{1})\text{-periodic}$ such that
\begin{equation}
\psi_{h}(t)= \max\left\{0, \min \{t-R_{1}-\sigma_{h}, R_{2}-\sigma_{h}-t\} \right\}
\end{equation}
and set $\psi^{*}_{h}:= \psi_{h}*\rho_{h}$ being $\rho_{h}$  mollifiers such that $\spt\rho_{h}\subset [-\sigma_{h},\sigma_{h}]$. Then by denoting the floor of a real number (maximum integer not exceeding the number) with $\lfloor\cdot\rfloor$ and setting $r=|x|$\,,
\begin{equation*}\label{onda}\zeta_{h}(r)=\left\{\begin{array}{ll}&\lfloor\beta_{h}\rfloor^{-1}
\sqrt{2(1-\nu)br^{-2}-2a(\nu+1)}\,\psi_{h}^{*}(R_{1}+(r-R_{1})\lfloor\beta_{h}\rfloor) \ \ \hbox{if} \  p_{1}\le p_{2}<0\,,\\
&\\
&\lfloor\beta_{h}\rfloor^{-1}\sqrt{2(\nu-1)br^{-2}-2a(\nu+1)}\,\psi_{h}^{*}(R_{1}+(r-R_{1})\lfloor\beta_{h}\rfloor) \ \ \hbox{if} \ p_{2}\le p_{1}<0\,,
\end{array}\right.
\end{equation*} $\zeta_h':=\partial\zeta/\partial r$\,, \
$D\zeta_h\,=\,(\zeta_{h,1},\zeta_{h,2})\,=\,(x_1/r,x_2/r)\,\zeta_h'$ \
and
\begin{equation*}\mathbb M(\theta)\, =\, \left[
  \begin{array}{cc}
    \cos \theta& -\sin \theta \  \\
     &\\
    \sin \theta &  \cos \theta.
  \end{array}
  \right] \qquad \mathbb S(\theta)\, =\,
  \left[
  \begin{array}{cc}
    \cos^{2}\theta\ \ &  \  \sin\theta\cos\theta\\
    &\\
     \sin\theta\cos\theta\ \  &  \sin^{2}\theta
       \end{array}
  \right]\, =\,
     (\zeta_h')^{-2}\,D\zeta_h\otimes D\zeta_h\ .
\end{equation*}
So $\mathbb{M}^{T}\mathbb{S}\,\mathbb{M}=\mathbf{e}_1\otimes\mathbf{e}_1$ and there exists $\Omega_{h}\subset\Omega$ with $|\Omega_{h}|\sim \sigma_{h}$ such that,  $|(\psi_{h}^*)'|=1$ on $\Omega\setminus\Omega_{h}$. Then referring to \eqref{gA} and \eqref{vv},
for every  $x\in\Omega\setminus\Omega_{h} $ we have
\begin{equation*}\begin{array}{ll}
&\\
&g_{2\E(\v)}(D\zeta_{h})= \left | 2\E(\vv)+D\zeta_{h}\otimes D\zeta_{h}\right |^{2}+\dfrac{\nu}{1-\nu}\left |2 \,{\rm div}\,\vv+|D\zeta_{h}|^{2}\right |^{2}=\vspace{0.2cm}\\
&\left|  2\mathbb M^{T}E(\vv)\mathbb M+\mathbb M^{T}D\zeta_{h}\otimes D\zeta_{h}\mathbb M\right |^{2}+\dfrac{\nu}{1-\nu}\left |4a+|D\zeta_{h}|^{2}\right |^{2}=
\vspace{0.2cm}\\
& \left |2(a-br^{-2}){\bf e}_{1}\otimes{\bf e}_{1}+2(a+br^{-2}){\bf e}_{2}\otimes{\bf e}_{2} +|{\zeta}_{h}'|^{2}\mathbb M^{T}\mathbb S\,\mathbb M\right |^{2}+\dfrac{\nu}{1-\nu}\left |4a+|\zeta_{h}'|^{2}\right |^{2}=\vspace{0.2cm}\\
& (2a-2br^{-2}+|\zeta_{h}'|^{2})^{2}+4(a+br^{-2})^{2}+\dfrac{\nu}{1-\nu}\left |4a+|\zeta_{h}'|^{2}\right |^{2}\,.
\end{array}
\end{equation*}
If $p_{1}\le p_{2}<0$, we have $b\geq 0\,,$ $|\zeta_h'|^2=2(1-\nu)br^{-1}-2a(\nu+1)+O(\lfloor \beta_h\rfloor^{-2})$ on $\Omega\setminus\Omega_{h}$, hence
$$g_{2\E(\v)}(D\zeta_{h})\,=\,4(1+\nu)(a+br^{-2})^{2}+O(\lfloor\beta_{h}\rfloor ^{-1})\,,
$$
\begin{equation*}\begin{array}{ll}
&\displaystyle\int_{\Omega}I_{\vv}(\xx,D\zeta_{h})\,d\xx=
\int_{\Omega\setminus\Omega_{h}}I_{\vv}(\xx,D\zeta_{h})\,d\xx+
\int_{\Omega_{h}}I_{\vv}(\xx,D\zeta_{h})\,d\xx=
\vspace{0.2cm}
\\
&=\dfrac{E}{2(1-\nu)}\displaystyle\int_{\Omega\setminus\Omega_{h}}
\{(a+b|\xx|^{-2})^{2}+O(\beta_{h}^{-1})\}
\,d\xx+O(\sigma_{h})\to \dfrac{E}{2(1-\nu)}\int_{\Omega}(a+b|\xx|^{-2})^{2}\,dx\\
\end{array}
\end{equation*}
Analogously, if $p_{2}\le p_{1}<0$,
then $b\leq 0\,$ and $|\zeta_h'|^2=2(\nu-1)br^{-2}-2a(\nu+1)+O(\lfloor \beta_h\rfloor^{-1})$ on $\Omega\setminus\Omega_{h}$, hence
$$g_{2\E(\v)}(D\zeta_{h})\,=\,4(1+\nu)(a-br^{-2})^{2}+O(\lfloor\beta_{h}\rfloor ^{-1})\,,
$$
\begin{equation*}
\displaystyle\int_{\Omega}I_{\vv}(\xx,D\zeta_{h})\,d\xx\to \dfrac{E}{2(1-\nu)}\int_{\Omega}(a-b|\xx|^{-2})^{2}\,d\xx\,.
\end{equation*}
By Lemma \ref{minconv} we know that
\begin{equation*}\displaystyle \min_{\boldsymbol{\xi}\in\mathbb R^{2}}I_{\vv}(x,\boldsymbol{\xi})= \left\{\begin{array}{ll}& \dfrac{E}{2(1-\nu)}(a+{b}|\xx|^{-2})^{2}\ \hbox{if}\ p_{2}\le p_{1}<0\,,\\
&\\
&\dfrac{E}{2(1-\nu)}(a-{b}|\xx|^{-2})^{2}\ \hbox{if}\ p_{1}\le p_{2}<0\,,\
\end{array}\right.
\end{equation*}
therefore in both cases we have proved that
\begin{equation*}
\int_{\Omega}I_{\vv}(\xx,D\zeta_{h})\,dx\to \min\{\mathcal G^{**}(\vv,\zeta): \zeta\in H^{1,4}(\Omega),\ \zeta=0\ \hbox{in}\ \partial\Omega\}\ .\end{equation*}
Moreover \vskip-0.4cm
$$h^{-2\alpha-1}\widetilde {\mathcal G}(\vv,\zeta_{h})=h^{-\alpha-1}F_{h}^{b}(\zeta_{h})+\int_{\Omega}I_{\vv}(x,D\zeta_{h})\,d\xx-\int_{\partial \Omega} \mathbf{f}\cdot\v\, d\mathcal H^{1}\,,$$
$$h^{-\alpha-1}F_{h}^{b}(\zeta_{h})\sim h^{2-\alpha}\beta_{h}\sigma_{h}^{-1}\ .$$
Therefore by Theorem \ref{relax} for every choice of $\beta_{h},\ \sigma_{h}$ satisfying the conditions detailed before, $(\vv,{\zeta}_{h})$ can be viewed as an asymptotically minimizing sequence of $\widetilde{\mathcal G}_{h}$ whose out-of-plane component exhibits periodic oscillations (period: $\beta_{h}^{-1}$;
asymptotic amplitude: $\sqrt{2(1-\nu)br^{-2}-2a(\nu+1)}$ if $p_{1}\!\le\! p_{2}\!<\!0$
and $\sqrt{2(\nu-1)br^{-2}-2a(\nu+1)}$ if $p_{2}\!\le\! p_{1}\!<\!0$) in the radial direction in the whole annular set. The optimal choice of $\beta_{h}$ can be determined heuristically as follows: previous estimates show that
$$h^{-2\alpha-1}\widetilde {\mathcal G}(\vv,\zeta_{h})- \min{\mathcal G}^{**}=R_{h}$$
where $R_{h}\sim h^{2-\alpha}\beta_{h}\sigma_{h}^{-1}+ \beta_{h}^{-1}+\sigma_{h}$. So, 
approximatively, we have to minimize the last term. A direct calculation shows that
the best choice corresponds to  ${\beta_h}^{-1}\sim h^{2/3-\alpha/3}, \ \sigma_{h}\sim h^{5/3(2-\alpha)}.$}\vskip-0.2cm
\end{example}
\begin{example} \label{radflat}\textbf{Flat minimizer.}
{\rm
  Let $\,\Gamma=\partial\Omega,\  \nu\in [0,1/2),\ i=0$ or $i=1\,,\ p_{1}\ge 0$\,,\\ so that
 $R_{1}^{2}a\ge(1-2\nu)b$
and by Lemma \ref{minconv} we get\vskip-0.5cm
$$
\min\{\mathcal G^{**}(\vv,\zeta):\ \zeta\in H^{1,4}(\Omega),\ \zeta=0\ \hbox{in}\ \partial\Omega\}\ = \ \int_\Omega I_\vv(\xx,0)\,d\xx .$$
Obviously the minimum is attained at $\zeta\equiv 0$ that is we have a flat minimizer.}
\end{example}
\begin{remark}
{\rm Let $\Gamma=\partial\Omega,\  \nu\in (-1,1/2)\,,\ $i=0$\,,\  p_{1}< 0\le p_{2}$. Hence
$a>0,\ b> 0,\ \nu\lambda_{2}+\lambda_{1}=a-br^{-2}+\nu(a+br^{-2})\ge 0$ in the annular set $A_1=\{\overline R:=\sqrt{(1-\nu)(1+\nu)^{-1}{b}{a}^{-1}}\le r\le R_2\}$ and $< 0$ in the annular set $A_2=\{R_1\le r<\overline R\}$. Then by the same computations performed in previous examples we can build minimizers which are flat in $A_1$ and oscillating in $A_2$.}
 \end{remark}
 \begin{example} \label{extang}\textbf{Tangentially oscillating minimizers.}
{\rm
 Let $\Gamma=\partial B_{R_1},\ \nu\in (-1,1/2)$, $i=1$  and choose $p_1>0$, $p_2>0$ such that
$p_2R_2^2=p_1R_1^2$.
If $\v\in \arg\min \mathcal F_{1,0}$ we find again
$\vv(\xx)=(a+b|\xx|^{-2})\xx$
with
\begin{equation}\label{abvinyl}
a=0,\ \ \ b=-(1+\nu)E^{-1}p_{1}R_{1}^{2}< 0.\end{equation}
Hence
 $\lambda_{1}=b{r^{-2}}< 0 < - b{r^{-2}} =\lambda_{2}$ are the eigenvalues of $\mathbb E(\vv)$
and $\v_{1}= (-\sin\theta,\cos\theta),\ \v_{2}=(\cos\theta,\sin\theta)\,$ the corresponding normalized eigenvectors.
\\
Choose  $\sigma_{h}\to 0_{+},\ \beta_{h}\to +\infty$,
$\phi_{h}:\mathbb R\to \mathbb R,\ \ 2\pi\text{-periodic}$ defined by
\begin{equation}\phi_{h}(t)=\max\left\{0, \min \{t-\sigma_{h}, 2\pi-\sigma_{h}-t\}\right\}
\end{equation}
and set $\phi_{h}^{*}:=\phi_{h}*\rho_{h}$ being $\rho_{h}$  mollifiers such that $\spt\rho_{h}\subset [-\sigma_{h},\sigma_{h}]$. Let
\begin{equation*}
  \zeta_h(r,\theta)=\sqrt{-2b(1-\nu)}\,\,
  {\lfloor \beta_{h}\rfloor}^{-1}\phi_h^{*}\!\left(\lfloor \beta_{h}\rfloor\theta\right)\big(\delta_{h}^{-1}(r-R_{1}){\bf 1}_{[R_{1},R_{1}+\delta_{h}]}(r)+{\bf 1}_{[R_{1}+\delta_{h},R_{2}]}(r)\big)
\end{equation*}
with $\delta_{h}\to 0_{+},\  \beta_{h}^{-1}\delta_{h}^{-1}\to 0$.
Then there exists $\Omega_{h}\subset\Omega$ with $|\Omega_{h}|\sim \sigma_{h}$ such that for every  $x\in\Omega\setminus\Omega_{h} $ we have $|(\phi_{h}^*)'|=1$ on $\Omega\setminus\Omega_{h}$. Therefore referring to \eqref{gA} and  \eqref{vv}
and by setting
\begin{equation*}R_{*}(\theta)\ =\ \left[
  \begin{array}{cc}
   -\sin \theta&  \cos \theta \  \\
     &\\
   \cos \theta & \sin \theta
  \end{array}
  \right]
  \end{equation*}
we get
\begin{equation*}\begin{array}{ll}
& \displaystyle\int_{\Omega\setminus\Omega_{h}}\left(\left | 2E(\vv)+D\zeta_{h}\otimes D\zeta_{h}\right |^{2}+\dfrac{\nu}{1-\nu}\left |D\zeta_{h}\right |^{4}\right)\,d\xx=\\
&\\
&=\displaystyle\int_{\Omega\setminus\Omega_{h}}\left(\left|  2R_{*}^{T}E(\vv)R_{*}+R_{*}^{T}D\zeta_{h}\otimes D\zeta_{h}R_{*}\right |^{2}+\dfrac{\nu}{1-\nu}\left |D\zeta_{h}\right |^{4}\right)\,d\xx=\\
&\\
&= \displaystyle\int_{\Omega\setminus\Omega_{h}}4(1+\nu)b^{2}|\xx|^{-4}\,d\xx +O(\lfloor\beta_{h}\rfloor^{-1}\delta_{h}^{-1})+O(\sigma_{h})+O(\delta_{h}).
\end{array}
\end{equation*}
By using now Lemma \ref{minconv} and by arguing  as in Example \ref{rad} we get
\begin{equation*}
\int_{\Omega}I_{\vv}(\xx,D\zeta_{h})\,dx\,-\,\int_{\partial \Omega} \mathbf{f}\cdot\v\, d\mathcal H^{1}\to \min\{\mathcal G^{**}(\vv,\zeta): \zeta\in H^{1,4}(\Omega),\ \zeta=0\ \hbox{in}\ \partial\Omega\}\ .\end{equation*}
\begin{equation*}
h^{-2\alpha-1}\widetilde {\mathcal G}(\vv,\zeta_{h})\to \min\mathcal G^{**}=\dfrac{Eb^{2}}{2(1+\nu)}\int_{\Omega}|\xx|^{-4}\,d\xx
\,-\,\int_{\partial \Omega} \mathbf{f}\cdot\v\, d\mathcal H^{1}\,.
\end{equation*}
Moreover, since $h^{-\alpha-1}F_{h}^{b}(\zeta_{h})\sim h^{2-\alpha}\beta_{h}\sigma_{h}^{-1}$, we get
\begin{equation*}
\begin{array}{lll}
  h^{-2\alpha-1}\widetilde {\mathcal G}(\vv,\zeta_{h})&=&h^{-\alpha-1}F_{h}^{b}(\zeta_{h})+\int_{\Omega}I_{\vv}(x,D\zeta_{h})\,d\xx
-\int_{\partial \Omega} \mathbf{f}\cdot\v\, d\mathcal H^{1}\,=\,\vspace{0.25cm}\\
&=&
h^{2-\alpha}\beta_{h}\sigma_{h}^{-1}+
O(\lfloor\beta_{h}\rfloor^{-1}\delta_{h}^{-1})+O(\sigma_{h})+O(\delta_{h})\,.
\end{array}
\end{equation*}
Hence, here  the optimal choice is
${\beta_h}^{-1}\sim h^{1-\alpha/2},\ \delta_{h}\sim \beta_{h}^{-1/2},\ \sigma_{h}\sim h^{1-\alpha/2}\beta_{h}^{1/2}$.}
\end{example}
\begin{remark}\label{strategy} {\rm Thanks to Lemma \ref{minconv} and Proposition \ref{flat}, Examples \ref{rad}, \ref{radflat}, \ref{extang} constitute a paradigm for the construction of oscillating versus flat
approximated minimizers.\\ Moreover we sketch another technique to devise new ones, by this procedure: first  take a boundary force field,  construct the corresponding prestressed state (in $2$D there are a lot of
significant classical examples, see for instance those of Examples \ref{example 2.7},
\ref{example 2.8}) and look at the eigenvalues of the strain matrix:  it is not difficult to obtain examples according to either $\,\nu\lambda_{2}+\lambda_{1}\ge 0\,$ or $\,\nu\lambda_{2}+\lambda_{1}< 0\,$
in the whole plate.\\
In the first case through Lemma \ref{minconv} and Proposition \ref{flat} we argue that there is only a flat minimizer, in the second one a careful use of Lemma \ref{minconv}
on the pattern of Examples \ref{rad}, \ref{extang} allows an easy construction.
}\end{remark}





\end{document}